\documentclass[a4paper,oneside,11pt]{article}
\usepackage{graphicx,afterpage}
\usepackage{bm}\usepackage{soul}
\usepackage{empheq}\usepackage{mathtools}
\usepackage[top=0.6in, bottom=1in, left=1in, right=0.8in]{geometry}
\usepackage{mathrsfs,color}
\usepackage{amsfonts}\usepackage{multirow}
\usepackage{amssymb}\usepackage{caption,comment}
\usepackage{amsmath}\usepackage{float,algorithm,algorithmic,mathtools}
\usepackage{amssymb,amsthm}
\usepackage{graphicx,afterpage,cancel,physics}
\def\XXint#1#2#3{{\setbox0=\hbox{$#1{#2#3}{\int}$ }
\vcenter{\hbox{$#2#3$ }}\kern-.6\wd0}}

\usepackage[pdftex,colorlinks=true]{hyperref}
\hypersetup{CJKbookmarks,%
bookmarksnumbered,%
colorlinks,%
linkcolor=black,%
citecolor=black,%
plainpages=false,%
pdfstartview=FitH}
\numberwithin{equation}{section}
\numberwithin{figure}{section}

\newcommand\tabcaption{\def\@captype{table}\caption}

\newtheorem{thm}{Theorem}[section]
\newtheorem{corollary}[thm]{Corollary}

\newtheorem{example}{Example}[section] 
\renewcommand{\theexample}{\thesection.\arabic{example}} 
\allowdisplaybreaks


\newtheorem{theorem}{Theorem}[section]

\newtheorem{proposition}[theorem]{Proposition}

\definecolor{orange}{RGB}{255,127,0}
\definecolor{gray}{RGB}{128,128,128}

\def\d{{\, \rm d}}

\afterpage{\clearpage}

\newcommand{\KL}{D_{\mathrm{KL}}}

\def\d{{\, \rm d}}

\title{Mechanisms and Pathways of Extreme Events in Partially-Observed Stochastic Dynamical Systems}
\author{Charlotte Moser, Nan Chen, and Marios Andreou  }

\date{\today}
\begin{document}
\maketitle\tableofcontents

\begin{abstract}
Extreme events occur across the natural, engineering, and socioeconomic sciences, where rare but high-impact episodes can lead to disproportionate consequences that pose major challenges for prediction and risk management. Existing studies have mainly focused on the statistics, sampling, forecasting, and attribution of extremes from observable variables. In this paper, we develop a mathematical framework for studying the mechanisms and pathways of extreme events in partially-observed stochastic dynamical systems with hidden variables. By integrating data assimilation with information-theoretic and trajectory-based diagnostics, we infer latent precursor dynamics from observations, quantify their uncertainty, and determine how their influence propagates toward an observed extreme event. Conditional Gaussian models provide a tractable analytical setting for deriving closed-form diagnostics, while the framework extends more broadly through numerical methods. The analysis proceeds from two complementary perspectives. From a trajectory-wise viewpoint, we compare filtering and smoothing distributions to identify the onset of hidden precursors and quantify their temporal influence range. From a statistical viewpoint, we construct event-conditioned hidden-state distributions to identify sensitive triggering directions, representative latent pathways, and multiple classes of extreme-event mechanisms through clustering. Three numerical examples illustrate the methodology. In an intermittent stochastic model, hidden damping dynamics emerge before observed bursts, where discrepancies between the filter and smoother provide an onset diagnostic. In a stochastic model with damping and forcing, separate damping-induced, forcing-driven, and mixed pathways to extremes are identified. In a nonlinear topographic-flow model, distinct mechanisms and pathways for blocking and unblocking patterns associated with the observed extreme events are revealed.
\end{abstract}
\section{Introduction}

Extreme events, often associated with large amplitudes, are generally characterized by rare occurrences that lie in the tails of a system's statistical distribution \cite{coles2001introduction, lucarini2016extremes}. They often have the potential to cause catastrophic impacts and induce substantial scientific and societal consequences \cite{albeverio2006extreme}. Extreme events arise across a wide range of spatiotemporal scales. For example, an extreme El Ni\~no event can trigger numerous localized extremes, including severe droughts, intense heatwaves, enhanced wildfire risk, and increased hurricane activity \cite{santoso2017defining}. Beyond Earth science, extreme events also appear in engineering and applied disciplines, including structural failures in materials science \cite{zhang2025physics}, optical and oceanic rogue waves \cite{akhmediev2016roadmap, kharif2008rogue}, black-swan shocks in financial markets \cite{phadnis2021study}, and rare transient events in excitable systems \cite{ansmann2013extreme}. Their mechanisms, impacts, and attribution across disciplines have been widely studied in \cite{farazmand2019extreme, trenberth2015attribution, noy2024extreme}, while their statistical behavior and associated mathematical challenges are reviewed in \cite{sapsis2021statistics, ghil2011extreme}.

Recent progress in extreme-event research has advanced along several complementary directions. Efficient sampling of rare and extreme events has important practical significance \cite{mohamad2018sequential, mackay2021sampling, finkel2026rare}, with a variety of rare-event algorithms being developed and applied to Earth system problems \cite{webber2019practical, finkel2024bringing}. Forecasting extreme events using both dynamical systems approaches \cite{kaveh2025spatiotemporal, chen2020predicting} and machine learning methods \cite{chang2025extreme, guth2019machine, farazmand2017variational, sun2025can, mojgani2023extreme, guan2026prediction} has also become increasingly active, including settings with limited training data. In addition, significant effort has been devoted to understanding the mechanisms underlying extreme events from a dynamical system perspective \cite{farazmand2019extreme, alvre2024studying, chowdhury2022extreme, mishra2020routes, farazmand2017variational, lucarini2016extremes}. These developments have substantially improved our ability to characterize, sample, and predict extremes.

Observed extreme events are often associated with intermittent behavior driven by the underlying nonlinear dynamics and instabilities \cite{lovejoy2018spectra, overland2021intermittency}. Yet the mechanisms that generate these extremes are typically complex and not directly inferable from observations alone. Statistical indicators of extremes, extracted solely from observations, can only provide corroborative signatures rather than predictive or necessary conditions \cite{brovkin2021past, kuehn2011mathematical, thomas2016using}. In many realistic systems, the governing dynamics involve hidden or unresolved processes that interact nonlinearly with the observed variables \cite{chen2023stochastic, majda2018model, grigorian2025learning, boyen2013discovering}. Extreme events frequently emerge from these multiscale interactions, in which small-scale or unobserved modes, via nonlinear and stochastic coupling, amplify through instability the large-scale observed components and ultimately trigger the visible extreme event \cite{majda2005information}. Such latent precursors are inherently hidden in purely observation-based analysis. Understanding how unresolved components initiate extreme events, identifying their precursors, and quantifying their influence on the timing and duration of extreme episodes are therefore both scientifically important and practically valuable. This dynamical characterization can improve mechanism-level understanding and attribution of extremes, as well as enhance the accuracy and lead time of early warning and prediction.

In this paper, we develop a mathematical framework for studying the mechanisms and pathways of extreme events in partially-observed stochastic systems with hidden variables. The central idea is to integrate data assimilation with information-theoretic and trajectory-based diagnostics in order to recover latent precursor dynamics, quantify their uncertainty, and determine how their influence propagates toward an observed extreme event. We focus on settings in which the hidden variables represent small-scale or unresolved latent processes that interact with the observed components and may begin to emerge and evolve well before an observable extreme event occurs. This makes them natural carriers of hidden precursor information. Inferring these hidden states from observations, quantifying the associated uncertainty, and determining how their effects propagate forward in time toward the observed extreme event will provide essential information about event precursors. Addressing these questions is particularly important in multiscale systems, where hidden dynamics may affect not only whether an extreme event occurs, but also when it begins, how rapidly it develops, and how long its impact persists. For general nonlinear systems, hidden-state inference is rarely available in closed form and typically requires computational approximations. To enable rigorous analysis, we consider a broad class of nonlinear stochastic dynamical systems known as conditional Gaussian nonlinear systems \cite{chen2018conditional, liptser2001statistics}. These systems admit analytically tractable solutions for data assimilation and event-conditioned statistics, allowing the proposed diagnostics to be examined without contamination from approximation error. Nevertheless, the methodology developed here does not rely on conditional Gaussianity. For more general nonlinear systems, the same diagnostics can be implemented through ensemble- or particle-based methods, which provide approximate reconstructions of hidden trajectories from partial observations.

The analysis proceeds from two complementary viewpoints. The first is a trajectory-wise approach that focuses on individual extreme events. For each event, we reconstruct the hidden variables along the evolution leading to the observed extreme event and thereby uncover the specific dynamical pathway through which that event arises. This allows event-wise attribution and provides a natural way to identify precursor timing and influence duration. The second is a statistical methodology that considers extreme events collectively. By conditioning on the occurrence of observed extremes and aggregating reconstructions of the hidden variables across many events, we obtain the corresponding hidden-state distribution during extreme episodes. This perspective reveals common precursor structures shared across events, identifies sensitive directions in the hidden state space associated with triggering mechanisms, and constructs representative latent pathways. Through classification, the combined insights from both viewpoints help distinguish between mechanisms for different types of observed extremes. The overall structure of the proposed framework is summarized in Figure~\ref{fig:overview}.

\begin{figure}[!ht]
\centering
\includegraphics[width=\textwidth]{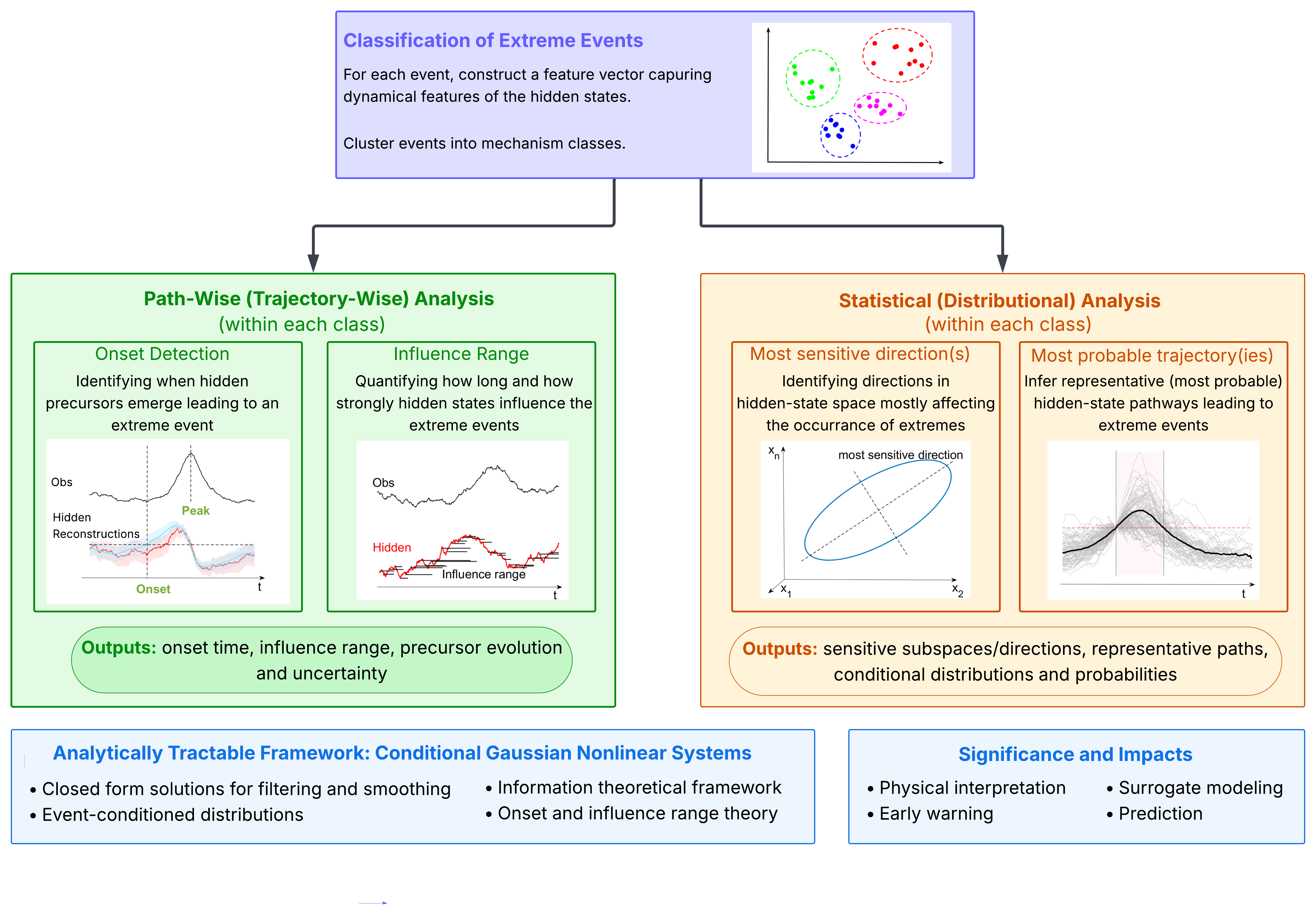}
\caption{
Schematic overview of the proposed framework for diagnosing hidden mechanisms and pathways of extreme events in partially-observed stochastic dynamical systems. }
\label{fig:overview}
\end{figure}

The rest of the paper is organized as follows. Section \ref{Sec:Framework} introduces the general hidden-state inference setting, presents conditional Gaussian systems as an analytically tractable subclass, and discusses both closed-form and ensemble-based strategies. Section \ref{Sec:Mechanisms} develops trajectory-wise, statistical, and classification tools for analyzing the mechanisms and pathways underlying observed extreme events. Section \ref{Sec:Numerics} demonstrates the proposed framework in several prototype nonlinear stochastic dynamical systems. The paper is concluded in Section \ref{Sec:Conclusion}.

\section{Hidden-State Inference Framework for Extreme-Event Diagnostics}\label{Sec:Framework}
\subsection{Problem setup}
Consider a coupled nonlinear stochastic system \cite{crisan2011oxford, rozovskii2012stochastic},
\begin{subequations}\label{General_System}
\begin{align}
  \frac{\d\mathbf{X}}{\d t} &= \mathbf{F}(\mathbf{X},\mathbf{Y},t) + \mathbf{B}_\mathbf{1}(\mathbf{X},\mathbf{Y},t)\dot{\mathbf{W}}_\mathbf{1}(t),\label{General_X}\\
  \frac{\d\mathbf{Y}}{\d t} &= \mathbf{G}(\mathbf{X},\mathbf{Y},t) + \mathbf{b}_\mathbf{2}(\mathbf{X},\mathbf{Y},t)\dot{\mathbf{W}}_\mathbf{2}(t),\label{General_Y}
\end{align}
\end{subequations}
where $\mathbf{X}\in\mathbb{R}^{n_\mathbf{X}}$ and $\mathbf{Y}\in\mathbb{R}^{n_\mathbf{Y}}$ denote multi-dimensional state variables. The drift terms $\mathbf{F}$ and $\mathbf{G}$ are general nonlinear functions of the joint state and time $t$, while $\mathbf{B}_1$ and $\mathbf{b}_2$ are state-dependent noise coefficient matrices. The processes $\dot{\mathbf{W}}_1$ and $\dot{\mathbf{W}}_2$ are independent mean-zero Gaussian random vectors with uncorrelated components that possess unit variance.

In many practical settings, only a subset of the state variables is directly observed. Here, $\mathbf{X}$ represents the observed components, while $\mathbf{Y}$ denotes the unobserved (hidden) variables. This separation can arise from measurement limitations, for example, satellite observations of sea surface quantities versus unobserved subsurface ocean dynamics. It may also come from modeling considerations, where $\mathbf{X}$ corresponds to resolved large-scale variables and $\mathbf{Y}$ represents unresolved or subgrid-scale processes.

Extreme events are naturally identified in the observed variables $\mathbf{X}$. However, the mechanisms responsible for triggering, amplifying, and sustaining these events often involve interactions with the hidden variables $\mathbf{Y}$. Understanding how the dynamics of $\mathbf{Y}$ influence the occurrence of extreme events in $\mathbf{X}$ is therefore a central challenge. This problem is particularly difficult due to the nonlinear coupling between $\mathbf{X}$ and $\mathbf{Y}$ and the presence of stochastic forcing.

The goal of this paper is to develop diagnostics for hidden mechanisms in partially-observed nonlinear systems. In general models, conditional distributions of the hidden variables given observations are rarely available in closed form and must be approximated through numerical data assimilation methods. We therefore distinguish between the general framework and special model classes, such as conditional Gaussian systems, where the inference problem becomes analytically tractable.

\subsection{Conditional Gaussian nonlinear stochastic systems}
We now introduce conditional Gaussian nonlinear systems, an important subclass of partially-observed nonlinear stochastic models for which hidden-state inference is analytically tractable \cite{liptser2001statistics, chen2018conditional},
\begin{subequations}\label{CGNS}
\begin{align}
  \frac{\d\mathbf{X}}{\d t} &= \mathbf{A}_0(\mathbf{X},t) + \mathbf{A}_1(\mathbf{X},t)\mathbf{Y} + \mathbf{B}_1(\mathbf{X},t)\dot{\mathbf{W}}_1(t),\label{CGNS_X}\\
  \frac{\d\mathbf{Y}}{\d t} &=\mathbf{a}_0(\mathbf{X},t) + \mathbf{a}_1(\mathbf{X},t)\mathbf{Y} + \mathbf{b}_2(\mathbf{X},t)\dot{\mathbf{W}}_2(t),\label{CGNS_Y}
\end{align}
\end{subequations}
 where $\mathbf{A}_0$, $\mathbf{a}_0$, $\mathbf{A}_1$, $\mathbf{a}_1$, $\mathbf{B}_1$, and $\mathbf{b}_2$ are vector- or matrix-valued functions that may depend nonlinearly on the observed state $\mathbf{X}$ and time $t$.

A key feature of \eqref{CGNS} is that the hidden variables $\mathbf{Y}$ enter only linearly in the system and only appear in the drift terms, while the coefficients depend nonlinearly on $\mathbf{X}$. Despite this conditional linearity, the coupled system remains highly nonlinear due to the nonlinear dependence on $\mathbf{X}$ and the multiplicative noise structure. As a result, the marginal distributions $p(\mathbf{X})$ and $p(\mathbf{Y})$, as well as the joint distribution $p(\mathbf{X},\mathbf{Y})$, can exhibit strong non-Gaussian features, including intermittency and heavy tails. Extreme events therefore arise naturally within this framework.

The importance of the system \eqref{CGNS} lies in its analytical tractability. In particular, conditioned on the trajectory of the observed variables $\mathbf{X}$, the distribution of the hidden variables $\mathbf{Y}$ remains Gaussian. This property enables closed-form characterization of the conditional dynamics, including filtering and smoothing distributions, which are generally unavailable for fully nonlinear systems. As will be shown below, this structure provides a foundation for decomposing the dynamics and attributing the mechanisms underlying extreme events.

The modeling framework in \eqref{CGNS} encompasses a wide range of classical and practically relevant systems that exhibit extreme behavior across scientific disciplines \cite{chen2018conditional}. These include physics-constrained stochastic models, stochastically coupled reaction-diffusion systems in neuroscience and ecology, and low-order or reduced models in turbulence, fluid dynamics, and geophysical flows. Representative examples include stochastic versions of the Lorenz models, conceptual models for Charney-DeVore flows and atmospheric low-frequency variability, and reduced models for monsoon dynamics and El Ni\~no-Southern Oscillation. In addition, fundamental systems such as the Boussinesq equations can be interpreted within this framework when decomposed into resolved and unresolved components.

More broadly, in multiscale systems where nonlinear interactions are dominated by quadratic advection terms as in many fluid or geophysical fluid systems, it is natural to decompose the state into large-scale variables $\mathbf{X}$ and small-scale variables $\mathbf{Y}$. In such settings, the self-interactions of the small-scale modes are often modeled stochastically, while the nonlinear coupling with the large-scale variables is retained \cite{majda2006nonlinear}. This leads naturally to systems of the form \eqref{CGNS}. These features make conditional Gaussian nonlinear systems a flexible and practically relevant framework for studying the mechanisms and pathways of extreme events in complex dynamical systems.

\subsection{Data assimilation in conditional Gaussian systems: filtering and smoothing}

 The diagnostics developed later require estimates of the conditional distributions of hidden variables given partial observations. In general nonlinear systems, these distributions are obtained approximately through numerical data assimilation methods, including ensemble Kalman filters, particle filters, and related smoothing approaches \cite{asch2016data, law2015data, sarkka2023bayesian}. However, for conditional Gaussian systems, the filtering and smoothing distributions remain Gaussian and satisfy closed-form evolution equations. We summarize this exact setting below, which will be used extensively in the first two numerical examples. In what follows, for simplicity, the observed time series of $\mathbf{X}$ is assumed to be continuously observed over $[0,T]$, which we condition on by using the notation $\cdot\mid\mathbf{X}_{0:\tau}$, $\tau\in[0,T]$; an analogous framework can be developed for discrete-in-time observations \cite{liptser2001statistics}.

A key feature of the system \eqref{CGNS} is that, given a realization of the observed trajectory $\mathbf{X}(s)$ for $s \le t$, the hidden variable $\mathbf{Y}(t)$ becomes conditionally linear and Gaussian \cite{liptser2001statistics}. This formally follows from substituting the known trajectory $\mathbf{X}(s)$ into \eqref{CGNS}, which reduces the dynamics of $\mathbf{Y}$ to a linear stochastic differential equation with time-dependent but known coefficients.

As a result, the conditional distribution
\begin{equation}\label{CGNS_PDF}
    p(\mathbf{Y}(t)\mid \mathbf{X}_{0:t}) \sim \mathcal{N}(\boldsymbol\mu_{\mathbf{f}}(t), \mathbf{R}_{\mathbf{f}}(t))
\end{equation}
is Gaussian. The conditional mean $\boldsymbol\mu_{\mathbf{f}}(t)$ and covariance $\mathbf{R}_{\mathbf{f}}(t)$ satisfy the following closed-form equations \cite{liptser2001statistics}:
\begin{subequations}\label{CGNS_Stat}
\begin{align}
  \frac{\d \boldsymbol{\mu}_{\mathbf{f}}}{\d t} &= (\mathbf{a}_0 + \mathbf{a}_1 \boldsymbol{\mu}_{\mathbf{f}})\, 
  + (\mathbf{R}_{\mathbf{f}}\mathbf{A}_1^\mathtt{T})(\mathbf{B}_1\mathbf{B}_1^\mathtt{T})^{-1}
  \left(\frac{\d\mathbf{X}}{\d t} - (\mathbf{A}_0 + \mathbf{A}_1\boldsymbol{\mu}_{\mathbf{f}})\right),\label{CGNS_Stat_Mean}\\
  \frac{\d\mathbf{R}_{\mathbf{f}}}{\d t} &= \mathbf{a}_1 \mathbf{R}_{\mathbf{f}} + \mathbf{R}_{\mathbf{f}}\mathbf{a}_1^\mathtt{T} + \mathbf{b}_2\mathbf{b}_2^\mathtt{T}
  - (\mathbf{R}_{\mathbf{f}}\mathbf{A}_1^\mathtt{T})(\mathbf{B}_1\mathbf{B}_1^\mathtt{T})^{-1}(\mathbf{A}_1\mathbf{R}_{\mathbf{f}}).\label{CGNS_Stat_Cov}
\end{align}
\end{subequations}
Here, $\cdot^\mathtt{T}$ denotes the matrix transpose. The filter equations \eqref{CGNS_Stat} are random differential equations due to their dependence on the observed process $\mathbf{X}$, with the covariance equation \eqref{CGNS_Stat_Cov} being a random continuous-time Riccati equation. The conditional distribution $p(\mathbf{Y}(t)\mid \mathbf{X}(s),\, s \le t)$ corresponds to the posterior distribution in Bayesian data assimilation, specifically the filtering solution. Accordingly, $\boldsymbol\mu_{\mathbf{f}}$ and $\mathbf{R}_{\mathbf{f}}$ are referred to as the posterior filter mean and posterior filter covariance, respectively. The classical Kalman-Bucy filter \cite{kalman1961new} arises as a special case of \eqref{CGNS_Stat} when the system is fully linear with constant or time-dependent coefficients.

Filtering provides an online estimate of the hidden state using observations up to the current time. However, for retrospective analysis, incorporating future observations can significantly improve state estimation. This leads to the smoothing problem \cite{sarkka2023bayesian}. Given a full observation trajectory $\mathbf{X}(t)$ over $t \in [0,T]$, the smoother distribution
\begin{equation}\label{Smoother}
  p(\mathbf{Y}(t)\mid \mathbf{X}_{0:T}) \sim \mathcal{N}(\boldsymbol\mu_\mathbf{s}(t), \mathbf{R}_\mathbf{s}(t))
\end{equation}
remains Gaussian \cite{chen2020efficient}. The smoother mean $\boldsymbol\mu_\mathbf{s}(t)$ and covariance $\mathbf{R}_\mathbf{s}(t)$ satisfy the backward equations
\begin{subequations}\label{Smoother_Main}
\begin{align}
  \frac{\overleftarrow{\d \boldsymbol{\mu}_\mathbf{s}}}{\d t} &=
  -\mathbf{a}_0 - \mathbf{a}_1\boldsymbol{\mu}_\mathbf{s}
  + (\mathbf{b}_2\mathbf{b}_2^\mathtt{T})\mathbf{R}_{\mathbf{f}}^{-1}(\boldsymbol\mu_{\mathbf{f}} - \boldsymbol{\mu}_\mathbf{s}),\label{Smoother_Main_mu}\\
  \frac{\overleftarrow{\d \mathbf{R}_\mathbf{s}}}{\d t} &=
  -\left(\mathbf{a}_1 + (\mathbf{b}_2\mathbf{b}_2^\mathtt{T}) \mathbf{R}_{\mathbf{f}}^{-1}\right)\mathbf{R}_\mathbf{s}
  - \mathbf{R}_\mathbf{s}\left(\mathbf{a}_1^\mathtt{T} + (\mathbf{b}_2\mathbf{b}_2^\mathtt{T})\mathbf{R}_{\mathbf{f}}^{-1}\right)
  + \mathbf{b}_2\mathbf{b}_2^\mathtt{T},\label{Smoother_Main_R}
\end{align}
\end{subequations}
where $\boldsymbol\mu_{\mathbf{f}}$ and $\mathbf{R}_{\mathbf{f}}$ are obtained from \eqref{CGNS_Stat}. The backward differential $\overleftarrow{\d\cdot}$ indicates that \eqref{Smoother_Main} is solved backward in time over $[0,T]$, with terminal condition $(\boldsymbol\mu_\mathbf{s}(T), \mathbf{R}_\mathbf{s}(T)) = (\boldsymbol\mu_{\mathbf{f}}(T), \mathbf{R}_{\mathbf{f}}(T))$.

Although the explicit formulae above are specific to conditional Gaussian systems, the subsequent trajectory-wise diagnostics depend only on having filtering and smoothing approximations of the hidden-state distributions. Therefore, they can be directly transferred to general nonlinear systems once these distributions are computed numerically.

Beyond providing efficient state estimation, the filtering and smoothing distributions play two complementary roles in the present study. From a statistical perspective, the collection of conditional Gaussian posterior distributions associated with different realizations of the observed trajectories forms the basis for constructing mixture representations of the marginal distribution of the hidden variables. This provides a natural way to characterize the underlying geometry in the space associated with the hidden variables that triggers the observed extreme events. From a dynamical perspective, the conditional distribution describes the evolution of the hidden variables along individual trajectories, which enables a path-wise analysis of extreme events. In particular, filtering reflects the information available in real time, while smoothing incorporates future observations and thus provides a more complete reconstruction of the hidden-state dynamics leading to extreme events. This is because it can capture the influence of hidden precursors that begin emerging well before the observed extreme event and evolve over a long time period, which may not be fully revealed by the online filter. This distinction will be exploited below to assess whether the mechanisms underlying extreme events can be reliably identified from online information alone or require retrospective analysis using smoothing.

\subsection{Conditional mixture representations}

For general nonlinear systems, the joint distribution of the observed and hidden variables can be represented as a mixture of the conditional distributions of the hidden state given different realizations of the observed trajectory. This provides a direct link between trajectory-wise state estimation and statistical characterization of the hidden dynamics. In particular, it allows one to compare the full marginal distribution of the hidden variables with the portion associated with extreme events in the observed variables, thereby identifying the subset of hidden states that is most relevant for triggering extreme episodes.

In general, the conditional distribution is not necessarily Gaussian and often not analytically tractable. It must be estimated using ensemble approaches, which incorporate additional approximation error that depends heavily on the number of samples. Thus, approximating the joint distribution requires sampling not only different realizations of the observed trajectories, but sampling the conditional distribution as well, which can become prohibitively expensive in sparsely observed high-dimensional systems.

On the other hand, the conditional Gaussian nonlinear systems admit a powerful advantage in this context, as the conditional distribution of the hidden state given a realization of the observed state has a known closed form \eqref{CGNS_Stat} or \eqref{Smoother_Main}. Consequently, recovering the joint distribution of the observed and hidden states only requires sampling of the observed state \cite{chen2018efficient}. This dramatically reduces the sampling burden of high dimensional hidden variables.

In either the general or conditional Gaussian case, using the filter or smoother distribution to form the conditional mixture provides distinct insights. The filtering representation, using only past and current information, is natural for online inference, while the smoothing representation, using the full observation interval, is better suited for retrospective analysis and event attribution. We state the smoothing version first, since it will be used below to analyze hidden-state distributions conditioned on extreme events. In the following propositions, the subscript of the expectations $\mathbb{E}[\cdot]$ indicates the distribution with respect to which they are taken.

\begin{proposition}[Conditional mixture representation]\label{prop:smoothing_mixture}
Let $(\mathbf{X}(t),\mathbf{Y}(t))$ be the solution of \eqref{General_System}, and assume that for each realization of the observed path $\mathbf{X}_{0:T}$ the smoothing distribution of $\mathbf{Y}(t)$ is $p_t(\mathbf y\mid \mathbf X_{0:T})$. Then the joint distribution of $(\mathbf{X}(t),\mathbf{Y}(t))$ at time $t$ can be written as
\begin{equation}\label{eq:smoothing_mixture}
p_t(\mathbf{x},\mathbf{y})
=
\int
\delta\big(\mathbf{x}-\mathbf{X}(t)\big)\,
p_t(\mathbf y\mid \mathbf X_{0:T})\,
\mathbb{P}(\mathrm{d}\mathbf{X}_{0:T}),
\end{equation}
where $\delta(\cdot)$ is the Dirac delta function, $\mathbf{X}(t)$ denotes the realization of the observed process at time t, and $\mathbb{P}(\mathrm{d}\mathbf{X}_{0:T}) = p(\mathbf{X}_{0:T}) \mathrm{d}\mathbf{X}_{0:T}$ is the probability measure over all possible sample paths of $\mathbf{X}$ in $[0,T]$.
Equivalently, for any locally integrable and measurable test function $\varphi$,
\begin{equation}\label{eq:smoothing_test_function}
\mathbb{E}_{p_t(\mathbf{x},\mathbf{y})}\big[\varphi(t,\mathbf{X}(t),\mathbf{Y}(t))\big]
=
\mathbb{E}_{p(\mathbf{X}_{0:T})}\!\left[
\int
\varphi\big(t,\mathbf{X}(t),\mathbf{y}\big)\,
p_t(\mathbf y\mid \mathbf X_{0:T})\,
\mathrm{d}\mathbf{y}
\right].
\end{equation}
\end{proposition}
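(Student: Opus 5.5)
The plan is to read the statement as the disintegration of the joint law of $(\mathbf{X}_{0:T},\mathbf{Y}(t))$ with respect to its first marginal. I will prove the test-function form \eqref{eq:smoothing_test_function} first and then read off \eqref{eq:smoothing_mixture} from it.

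Fix $t\in[0,T]$ and view the pair $(\mathbf{X}_{0:T},\mathbf{Y}(t))$ as a random element of $C([0,T];\mathbb{R}^{n_\mathbf{X}})\times\mathbb{R}^{n_\mathbf{Y}}$. The path space is Polish and $\mathbb{R}^{n_\mathbf{Y}}$ is standard Borel. Hence a regular conditional distribution of $\mathbf{Y}(t)$ given $\mathbf{X}_{0:T}$ exists. By hypothesis it is the smoothing distribution $p_t(\mathbf{y}\mid\mathbf{X}_{0:T})\,\mathrm{d}\mathbf{y}$, so I identify it with the kernel in the statement.

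The defining property of a regular conditional distribution, applied to the measurable functional $\psi(\mathbf{X}_{0:T},\mathbf{y}) := \varphi(t,\mathbf{X}(t),\mathbf{y})$, is the tower property
\[
\mathbb{E}\big[\psi(\mathbf{X}_{0:T},\mathbf{Y}(t))\big]
=\mathbb{E}\Big[\mathbb{E}\big[\psi(\mathbf{X}_{0:T},\mathbf{Y}(t))\,\big|\,\mathbf{X}_{0:T}\big]\Big]
=\mathbb{E}_{p(\mathbf{X}_{0:T})}\Big[\int\psi(\mathbf{X}_{0:T},\mathbf{y})\,p_t(\mathbf{y}\mid\mathbf{X}_{0:T})\,\mathrm{d}\mathbf{y}\Big].
\]
This holds for bounded measurable $\varphi$ directly. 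For nonnegative $\varphi$ it follows by monotone convergence. For general $\varphi$ with $\mathbb{E}|\varphi|<\infty$ I split into positive and negative parts. I interpret the hypothesis ``locally integrable'' in the statement as this integrability requirement. The evaluation map $\mathbf{X}_{0:T}\mapsto\mathbf{X}(t)$ is continuous, hence measurable, so $\psi$ is jointly measurable. The left-hand side is exactly $\mathbb{E}_{p_t(\mathbf{x},\mathbf{y})}[\varphi(t,\mathbf{X}(t),\mathbf{Y}(t))]$, which gives \eqref{eq:smoothing_test_function}.

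To get \eqref{eq:smoothing_mixture}, I insert $\varphi(t,\mathbf{x},\mathbf{y})=\int\varphi(t,\mathbf{x}',\mathbf{y})\,\delta(\mathbf{x}'-\mathbf{x})\,\mathrm{d}\mathbf{x}'$ on the right-hand side and apply Fubini. This shows that the measure
\[
\int\delta\big(\mathbf{x}-\mathbf{X}(t)\big)\,p_t(\mathbf{y}\mid\mathbf{X}_{0:T})\,\mathbb{P}(\mathrm{d}\mathbf{X}_{0:T})
\]
integrates every test function to the same value as $p_t(\mathbf{x},\mathbf{y})$. Two measures that agree on all bounded measurable test functions coincide, so the two expressions are equal as measures, and as densities whenever $p_t$ exists. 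The Dirac mass in this formula is understood as the pushforward of the path measure under evaluation at time $t$.

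The main obstacle is not the computation but making sense of the objects. The symbol $p(\mathbf{X}_{0:T})\,\mathrm{d}\mathbf{X}_{0:T}$ has no Lebesgue meaning on path space, so I treat $\mathbb{P}$ as the law of $\mathbf{X}_{0:T}$ on $C([0,T])$. Similarly, the Dirac delta must be read weakly, in the sense of the previous paragraph. Once these are fixed, everything reduces to the existence of regular conditional probabilities, which Polishness guarantees, and to Fubini.
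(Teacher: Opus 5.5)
Your proposal is correct and follows the same route as the paper's proof: condition on $\mathbf{X}_{0:T}$ via the tower property, replace the inner conditional expectation by the integral of $\varphi(t,\mathbf{X}(t),\mathbf{y})$ against the smoothing density $p_t(\mathbf{y}\mid\mathbf{X}_{0:T})$, and read \eqref{eq:smoothing_mixture} as the weak form of \eqref{eq:smoothing_test_function}. The extra material you supply is exactly the measure-theoretic justification the paper leaves implicit: existence of a regular conditional distribution on the Polish path space, the disintegration identity for functionals depending jointly on $\mathbf{X}_{0:T}$ and $\mathbf{y}$, the bounded-to-nonnegative-to-integrable extension, and the pushforward reading of the Dirac mass and of $\mathbb{P}(\mathrm{d}\mathbf{X}_{0:T})$; likewise, replacing ``locally integrable'' by integrability under the joint law is the right repair of that hypothesis.
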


\begin{proof}[Proof of Proposition \ref{prop:smoothing_mixture}]
For any locally integrable and measurable test function $\varphi(t,\mathbf{x},\mathbf{y})$, the tower property \cite{williams1991probability} gives
\begin{equation}
\mathbb E_{p_t(\mathbf{x},\mathbf{y})}[\varphi(t,\mathbf X(t),\mathbf Y(t))]
=
\mathbb E_{p(\mathbf{X}_{0:T})}\!\left[
\mathbb E_{p_t(\mathbf{y}|\mathbf{X}_{0:T})}\big[\varphi(t, \mathbf X(t),\mathbf Y(t))\mid \mathbf X_{0:T}\big]
\right].
\end{equation}
Conditioned on the observed path $\mathbf X_{0:T}$, the smoothing distribution of $\mathbf Y(t)$ is $p_t(\mathbf y\mid \mathbf X_{0:T})$. Therefore,
\begin{equation}
\mathbb E_{p_t(\mathbf y\mid \mathbf X_{0:T})}\big[\varphi(t,\mathbf X(t),\mathbf Y(t))\mid \mathbf X_{0:T}\big]
=
\int
\varphi\big(t,\mathbf X(t),\mathbf y\big)\,
p_t(\mathbf y\mid \mathbf X_{0:T})\,
\mathrm d\mathbf y.
\end{equation}
Substituting this into the previous identity yields
\begin{equation}
\mathbb E_{p_t(\mathbf{x},\mathbf{y})}[\varphi(t,\mathbf X(t),\mathbf Y(t))]
=
\mathbb E_{p(\mathbf{X}_{0:T})}\!\left[
\int
\varphi\big(t,\mathbf X(t),\mathbf y\big)\,
p_t(\mathbf y\mid \mathbf X_{0:T})\,
\mathrm d\mathbf y
\right],
\end{equation}
which is exactly \eqref{eq:smoothing_test_function}. The density representation
\eqref{eq:smoothing_mixture} is the corresponding weak-form expression of the same identity.
\end{proof}

A completely analogous formula holds if the smoothing distribution $p_t(\mathbf y\mid \mathbf X_{0:T})$ is replaced with the filtering distribution $p_t(\mathbf y\mid \mathbf X_{0:t})$ and the conditioning is taken with respect to $\mathbf{X}_{0:t}$ rather than $\mathbf{X}_{0:T}$ \cite{chen2018efficient}. Thus, the same conditional mixture methodology can be used for both online and retrospective analyses. The corresponding formulae for the conditional Gaussian case are given by replacing the general conditional distribution $p_t(\mathbf y\mid \mathbf X_{0:T})$ with the Gaussian smoother $\mathcal N (\boldsymbol\mu_{\mathbf{s}},\mathbf{R}_{\mathbf{s}})$ or filter $\mathcal N  (\boldsymbol\mu_{\mathbf{f}},\mathbf{R}_{\mathbf{f}})$ distributions, and conditioning on the appropriate trajectory $\mathbf{X}_{0:T}$ or $\mathbf{X}_{0:t}$. 

In general nonlinear systems, $p_t(\mathbf y\mid \mathbf X_{0:T})$ is not explicitly known and must be approximated using ensemble or particle methods. In contrast, the conditional Gaussian structure \eqref{CGNS} provides a major computational advantage in the mixture representation. Since the dependence on the hidden variable is treated analytically within each Gaussian component, one can evaluate many quantities of interest explicitly, including moments, conditional expectations, and event-conditioned statistics, without direct sampling in the $\mathbf{Y}$-space required for general nonlinear systems. The only remaining sampling is over the distribution of the observed trajectories, $p(\mathbf{X}_{0:T})$ or $p(\mathbf{X}_{0:t})$.

To simplify notation, define the conditional expectation functional
\begin{equation}\label{eq:G_def}
G_t(\mathbf X_{0:T})
=
\mathbb E_{p_t(\mathbf y\mid \mathbf X_{0:T})}\big[\varphi(t,\mathbf X(t),\mathbf Y(t))\mid \mathbf X_{0:T}\big]
=
\int
\varphi\big(t,\mathbf X(t),\mathbf y\big)\,
p_t(\mathbf y\mid \mathbf X_{0:T})\,
\mathrm d\mathbf y.
\end{equation}

We only describe the smoothing version of the following proposition, but the same applies to the filtering with the appropriate conditioning. 

\begin{proposition}[Monte Carlo accuracy and hidden-dimension independence]\label{prop:mc_accuracy}
Let $\{\mathbf{X}^{(k)}_{0:T}\}_{k=1}^K$ be $K$ independent samples of the observation path distribution, and let the corresponding smoother distributions $p_t(\mathbf y\mid \mathbf X_{0:T}^{(k)})$ be exactly known (such as for conditional Gaussian nonlinear systems). Consider the Monte Carlo estimator
\begin{equation}
\widehat I_K(t)
=
\frac{1}{K}\sum_{k=1}^K
G_t(\mathbf X^{(k)}_{0:T}).
\end{equation}
Equivalently, for $\widehat{p_t}^K(\mathbf{x}, \mathbf{y})$ defined as the empirical mixture density

\begin{equation}\label{eq:mix_dens}
    \widehat{p_t}^K(\mathbf{x}, \mathbf{y}) = \frac{1}{K} \sum_{k=1}^K \delta(\mathbf{x} - \mathbf{X}^{(k)}) p_t(\mathbf y\mid \mathbf X_{0:T}^{(k)}).
\end{equation}

We can also write $I_K(t)$ as,

\begin{equation}
\widehat I_K(t)
=
\frac{1}{K}\sum_{k=1}^K
\int
\varphi\big(t,\mathbf X^{(k)}(t),\mathbf y\big)\,
p_t(\mathbf y\mid \mathbf X_{0:T}^{(k)})\,
\mathrm d\mathbf y,
\end{equation}
where $\varphi$ is any locally integrable and measurable test function and $X_{0:T}^{(k)}$ denotes the realization of the observed trajectory at time t in the $k$-th sample. Then $\widehat I_K(t)$ is an unbiased estimator of $\mathbb E_{p_t(\mathbf{x},\mathbf{y})}[\varphi(t,\mathbf X(t),\mathbf Y(t))]$, and
\begin{equation}
\mathrm{Var}_{p(X_{0:T})}(\widehat I_K(t))
=
\frac{1}{K}\,
\mathrm{Var}_{p(X_{0:T})}\big(G_t(\mathbf X_{0:T})\big).
\end{equation}
Consequently, the root-mean-square Monte Carlo error is of order $\mathcal O(K^{-1/2})$, arising from finite-sample approximating the path-wise law of the observed trajectories, where this sampling error does not depend explicitly on the hidden dimension of $\mathbf Y$, avoiding the latent curse of dimensionality.
\end{proposition}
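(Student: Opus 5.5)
The plan is to treat $\widehat I_K(t)$ as a standard sample mean of i.i.d. random variables, with Proposition~\ref{prop:smoothing_mixture} identifying its target. First, I will set $Z_k := G_t(\mathbf X^{(k)}_{0:T})$ for $k=1,\dots,K$. Each $G_t$ is a fixed measurable functional of the path, defined in \eqref{eq:G_def}, and it is exactly computable because the smoother density is assumed known. Since the paths $\mathbf X^{(k)}_{0:T}$ are independent draws from $p(\mathbf X_{0:T})$, the $Z_k$ are i.i.d. copies of $Z := G_t(\mathbf X_{0:T})$.

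I will make the integrability assumption explicit: $\mathbb E|\varphi(t,\mathbf X(t),\mathbf Y(t))|<\infty$ for unbiasedness, and $\mathrm{Var}(Z)<\infty$ (for instance, $\varphi$ square integrable under $p_t$) for the variance statement. This is the step I expect to need the most care. Local integrability of $\varphi$ alone does not guarantee that $G_t$ is finite or that its expectation exists, so I will state these conditions as hypotheses.

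Under these assumptions, the equivalence of the two expressions for $\widehat I_K(t)$ is immediate. Integrating $\varphi$ against the empirical mixture density \eqref{eq:mix_dens} and using the sifting property of $\delta(\mathbf x-\mathbf X^{(k)}(t))$ collapses the $\mathbf x$-integral. This leaves $\frac1K\sum_k\int\varphi(t,\mathbf X^{(k)}(t),\mathbf y)\,p_t(\mathbf y\mid\mathbf X^{(k)}_{0:T})\,\mathrm d\mathbf y$, which is $\frac1K\sum_k Z_k$ by \eqref{eq:G_def}.

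For unbiasedness, linearity gives $\mathbb E[\widehat I_K(t)]=\mathbb E_{p(\mathbf X_{0:T})}[G_t(\mathbf X_{0:T})]$. By \eqref{eq:smoothing_test_function} in Proposition~\ref{prop:smoothing_mixture}, which is the tower property, this equals $\mathbb E_{p_t(\mathbf x,\mathbf y)}[\varphi(t,\mathbf X(t),\mathbf Y(t))]$.

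For the variance, independence of the $Z_k$ makes all cross-covariances vanish. Hence $\mathrm{Var}(\widehat I_K(t))=K^{-2}\sum_k\mathrm{Var}(Z_k)=K^{-1}\mathrm{Var}(G_t(\mathbf X_{0:T}))$. Since the estimator is unbiased, its mean-square error equals this variance, and taking square roots gives the RMS error $\sqrt{\mathrm{Var}(G_t)}\,K^{-1/2}=\mathcal O(K^{-1/2})$.

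Finally, I will interpret the dimension claim. The only randomness in $\widehat I_K$ comes from sampling the observed paths, because the $\mathbf y$-integral is evaluated exactly, for example in closed form through $(\boldsymbol\mu_{\mathbf s},\mathbf R_{\mathbf s})$ in the conditional Gaussian case. Consequently, the rate $K^{-1/2}$ and the number of samples involve no sampling in $\mathbb R^{n_{\mathbf Y}}$. The dimension $n_{\mathbf Y}$ enters only implicitly, through the constant $\mathrm{Var}(G_t)$. I will remark that this constant is bounded by $\mathrm{Var}(\varphi)$ via the law of total variance, $\mathrm{Var}(G_t)=\mathrm{Var}(\varphi)-\mathbb E[\mathrm{Var}(\varphi\mid\mathbf X_{0:T})]\le\mathrm{Var}(\varphi)$. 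So conditioning can only reduce the error relative to naive joint sampling of $(\mathbf X,\mathbf Y)$, and this Rao--Blackwell-type observation justifies the phrase ``no explicit dependence on the hidden dimension.''
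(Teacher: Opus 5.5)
Your proposal is correct and follows essentially the same route as the paper: you treat the summands $G_t(\mathbf X^{(k)}_{0:T})$ as i.i.d.\ measurable functionals of the sampled paths, invoke Proposition~\ref{prop:smoothing_mixture} (the tower property) for unbiasedness, use independence for the $1/K$ variance, and appeal to the analytic $\mathbf y$-integration for the dimension remark. Your explicit integrability hypotheses and the law-of-total-variance bound $\mathrm{Var}(G_t)\le\mathrm{Var}(\varphi)$ are sound refinements that the paper's proof leaves implicit.
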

\begin{proof}[Proof of Proposition \ref{prop:mc_accuracy}]
By definition and Proposition \ref{prop:smoothing_mixture},
\begin{equation}
G_t(\mathbf X^{(k)}_{0:T})
=
\mathbb E_{p_t(\mathbf y\mid \mathbf X_{0:T}^{(k)})}\big[\varphi(t,\mathbf X^{(k)}(t),\mathbf Y(t))\mid \mathbf X^{(k)}_{0:T}\big] =
\int
\varphi\big(t,\mathbf X^{(k)}(t),\mathbf y\big)\,
p_t(\mathbf y\mid \mathbf X_{0:T}^{(k)})\,
\mathrm d\mathbf y.
\end{equation}
 
Hence the Monte Carlo estimator can be written as
\begin{equation}
\widehat I_K(t)
=
\frac{1}{K}\sum_{k=1}^K G_t(\mathbf X^{(k)}_{0:T}).
\end{equation}
Since the trajectories $\mathbf X^{(k)}_{0:T}$ are independent and identically distributed, $\{G_t(\mathbf X^{(k)}_{0:T})\}_{k=1}^K$ are also i.i.d.\ random variables as measurable functionals of $\mathbf X_{0:T}$. Therefore, from Proposition \ref{prop:smoothing_mixture}
\begin{equation}
\mathbb E_{p(X_{0:T})}[\widehat I_K(t)]
=
\frac{1}{K}\sum_{k=1}^K \mathbb E_{p(X_{0:T})}[G_t(\mathbf X_{0:T}^{(k)})]
=
\mathbb E_{_{p_t(\mathbf{x},\mathbf{y})}}[\varphi(t,\mathbf X(t),\mathbf Y(t))],
\end{equation}
which proves the unbiasedness. Moreover, we similarly have
\begin{equation}
\mathrm{Var}_{p(X_{0:T})}(\widehat I_K(t))
=
\mathrm{Var}_{p(X_{0:T})}\!\left(
\frac{1}{K}\sum_{k=1}^K G_t(\mathbf X^{(k)}_{0:T})
\right)
=
\frac{1}{K}\,\mathrm{Var}_{p(X_{0:T})}\big(G_t(\mathbf X_{0:T})\big).
\end{equation}
It follows that the root-mean-square Monte Carlo error is of order $\mathcal O(K^{-1/2})$.

Finally, given $p_t(\mathbf y\mid \mathbf X_{0:T}^{(k)})$, the estimator samples only the distribution of the observed trajectories $\mathbf X_{0:T}$. The integration over the hidden variable $\mathbf Y$ is performed analytically inside each conditional component. Therefore, the sampling error does not depend explicitly on the hidden dimension of $\mathbf Y$ when $p_t(\mathbf y\mid \mathbf X_{0:T}^{(k)})$ is known, such as the case of conditional Gaussian nonlinear systems.
\end{proof}
The key point is that given the posterior distribution $p_t(\mathbf y\mid \mathbf X_{0:T}^{(k)})$, $\widehat I_K(t)$ is an empirical average of the functional $G_t(\mathbf X_{0:T})$, which depends only on the observed trajectory. The hidden variable $\mathbf Y$ is integrated out analytically within each conditional component. Therefore, the Monte Carlo error is governed by the variability of $G_t(\mathbf X_{0:T})$ in the $\mathbf X$-path space, rather than by direct sampling in the hidden state space. This makes the method particularly effective in problems with high-dimensional hidden variables, which is often the case in practice. In the examples below, even when the full coupled system is simulated by Monte Carlo, the theorem explains why the mixture approximation itself is not degraded by increasing the hidden-state dimension.

Note, however, that the theorem requires an explicit posterior distribution, which is often unknown in general nonlinear systems. Thus, the general case has additional sampling demands on the hidden state through numerical data assimilation methods. This highlights the substantial computational advantage of the conditional Gaussian structure, which ensures a tractable posterior distribution while still capturing complex nonlinear dynamics.

This mixture perspective provides a statistical characterization of the hidden variables associated with extreme events. It will be complemented later by a path-wise analysis, where filtering and smoothing are compared to assess whether the mechanisms underlying extreme events can be reliably detected online or require retrospective reconstruction.

\subsection{Relative entropy and information-theoretic diagnostics}

To quantify differences between probability distributions arising in both the path-wise and statistical analyses below, we adopt the relative entropy \cite{kullback1951information, kullback1997information}, also known as Kullback-Leibler (KL) divergence. For two probability densities $p(\mathbf{y})$ and $q(\mathbf{y})$ defined on the same state space, the relative entropy of $p$ with respect to $q$ is
\begin{equation}\label{eq:KL_general}
\KL\big(p\,\|\,q\big)
=
\int p(\mathbf{y}) \log \frac{p(\mathbf{y})}{q(\mathbf{y})}\,\mathrm{d}\mathbf{y},
\end{equation}
where the integral is taken over the common support of $p$ and $q$.

The relative entropy is always nonnegative and vanishes if and only if $p=q$ almost everywhere. Additionally, it is invariant under general nonlinear changes of variables. It therefore provides a natural information-theoretic measure of discrepancy between two probabilistic descriptions of the hidden variables. Unlike a pointwise error metric, it compares entire distributions and thus captures differences in both coherent signals and uncertainty. Further, the logarithmic form is particularly well suited for studying extreme events as it transforms multiplicative contributions to additive, ensuring that differences in the tails contribute meaningfully rather than being dominated by high probability regions \cite{kleeman2011information}.  

In the present work, the relative entropy will be used in three different settings:

\begin{itemize}
\item \textbf{Filtering versus smoothing.}
Comparing
\(
p\big(\mathbf{Y}(t)\mid \mathbf{X}_{0:T}\big)
\)
with
\(
p\big(\mathbf{Y}(t)\mid \mathbf{X}_{0:t}\big)
\),
which quantifies the additional information about the hidden states gained from future observations and helps identify the onset of extreme-event mechanisms.

\item \textbf{Full smoothing versus finite-lag smoothing.}
Comparing the full smoother with a truncated smoother based only on observations up to $t+\tau$, which quantifies how much future information is needed to effectively reconstruct the hidden dynamics and thereby defines temporal influence ranges of potential triggering mechanisms for extreme events.

\item \textbf{Unconditional versus extreme-event-conditioned statistics.}
Comparing the marginal distribution
\(
p_t(\mathbf{y})
\)
with the conditional distribution
\(
p_t(\mathbf{y}\mid \mathcal E)
\),
where $\mathcal E$ denotes an extreme-event set in the observed variables, which reveals which hidden-state structures are preferentially associated with extreme events.
\end{itemize}

These three uses correspond, respectively, to (i) trajectory-wise onset detection, (ii) trajectory-wise influence quantification, and (iii) statistical characterization of hidden mechanisms.

A particularly important case arises when both distributions are Gaussian:
\begin{equation}
p(\mathbf{y}) = \mathcal N(\boldsymbol\mu_p,\mathbf R_p) \qquad\mbox{and}\qquad
q(\mathbf{y}) = \mathcal N(\boldsymbol\mu_q,\mathbf R_q),
\end{equation}
where $\mathbf{y}\in\mathbb{R}^{n_\mathbf{Y}}$. Then the relative entropy admits the closed-form formula
\begin{equation}\label{eq:KL_gaussian_general}
\KL\big(p\,\|\,q\big)
=
\frac{1}{2}
\left[(\boldsymbol\mu_q-\boldsymbol\mu_p)^\mathtt{T}
\mathbf R_q^{-1}
(\boldsymbol\mu_q-\boldsymbol\mu_p)\right] + \frac{1}{2}\left[\mathrm{tr}\!\left(\mathbf R_q^{-1}\mathbf R_p\right)
-
n_\mathbf{Y}
+
\log\frac{\det\mathbf R_q}{\det\mathbf R_p}
\right].
\end{equation}

The first term on the right-hand side of \eqref{eq:KL_gaussian_general} is the signal and measures the discrepancy in the means of the distributions, weighted by the covariance inverse $\mathbf{R}_q^{-1}$. The second term on the right-hand side is the dispersion, which quantifies the differences in the covariance structures, capturing discrepancies in the spread and orientation of the distributions. Since filtering, smoothing, and many reduced approximations in conditional Gaussian systems are Gaussian, \eqref{eq:KL_gaussian_general} provides an efficient diagnostic tool throughout the paper. In later sections, this explicit formula will be used directly whenever the compared distributions are Gaussian or the non-Gaussian distributions are approximated by Gaussians.

\section{Mechanisms and Pathways of Extreme Events}\label{Sec:Mechanisms}
With the inference framework in place, we now turn to the central goal of this paper: extracting the hidden mechanisms and pathways that generate observed extreme events.

We develop methods for diagnosing the hidden mechanisms that generate observed extreme events. The analysis proceeds along two complementary directions. First, we introduce trajectory-wise diagnostics that operate on individual realizations and identify when hidden precursors emerge, whether they are detectable in real time, and how long their influence persists. Second, we develop statistical diagnostics that aggregate information across many events in order to reveal common hidden-state structures, sensitive triggering directions, and representative latent pathways associated with extreme episodes. To explore different triggering mechanisms, we also incorporate a classification step that separates extreme events into coherent groups when multiple distinct mechanisms coexist.

The diagnostics in this section are formulated in terms of filtering and smoothing distributions and therefore apply broadly beyond conditional Gaussian models. When closed-form posteriors are unavailable, these quantities can be replaced by ensemble or particle approximations \cite{evensen2009data, asch2016data, johansen2008tutorial, alexander2005accelerated, taghvaei2023survey}.

\subsection{Path-wise study of extreme events}

We start by developing path-wise tools for identifying the onset and temporal influence of the hidden mechanisms associated with extreme events. The key idea is to compare different conditional distributions of the hidden variables along an individual realization of the observed state and to quantify the discrepancy between these latent representatives through the relative entropy. Since the hidden variables are not directly observed, this provides a practical way to infer when hidden precursors emerge, whether they are detectable in real time, and how long their influence persists before the observed extreme event occurs.

Throughout this subsection, let $t_\ast$ denote the time at which an observed extreme event occurs, identified through a prescribed criterion on $\mathbf{X}(t)$ such as the attainment of a local maximum.

\subsubsection{Filtering versus smoothing and hidden onset detection}

Recall that the filtering distribution $p_{\text{f}}(\mathbf y,t)=p\big(\mathbf Y(t)\mid \mathbf X_{0:t}\big)$ represents the best online estimate of the hidden state at time $t$, using only information available up to that time. In contrast, the smoothing distribution
$p_{\text{s}}(\mathbf y,t)
=
p\big(\mathbf Y(t)\mid \mathbf X_{0:T}\big)$ with $T\ge t_\ast$ uses future observations and therefore provides an improved retrospective reconstruction of the hidden-state evolution.

We quantify the discrepancy between these two distributions by the time-dependent relative entropy
\begin{equation}\label{eq:KLt_path}
{\KL}(t)
=
\KL\!\big(p_{\text{s}}(\cdot,t)\,\|\,p_{\text{f}}(\cdot,t)\big).
\end{equation}

The quantity $\KL(t)$ measures the information gain obtained by incorporating future observations \cite{andreou2026assimilative}. If $\KL(t)$ remains small, then the filtering estimate already captures the hidden dynamics reliably in real time. Large values of $\KL(t)$ indicate that essential precursor information is absent from the online estimate and only becomes visible retrospectively through smoothing.

To identify the onset of the hidden mechanisms leading to the extreme event, we examine $\KL(t)$ backward in time over a prescribed precursor window preceding the event peak at $t=t_\ast$. Let $[t_\ast-T_{\mathrm{pre}},\,t_\ast]$ denote the search interval, where $t_\ast$ is the extreme event's peak time and $T_{\mathrm{pre}}>0$ is a chosen look-back horizon. The onset time is defined as the first time at which the discrepancy between smoothing and filtering exceeds a prescribed threshold $\kappa$, namely
\begin{equation}\label{t_on}
t_{\mathrm{on}}^{(\kappa, T_{pre})}
=
\inf\Big\{
t\in[t_\ast-T_{\mathrm{pre}},\,t_\ast]:
\KL(t)\ge \kappa
\Big\}.
\end{equation}
The rationale behind \eqref{t_on} is that once future observations become sufficiently informative about the hidden dynamics, the smoother and filter begin to differ substantially; the first persistent exceedance therefore marks the emergence of the latent precursor mechanisms that drive the forthcoming extreme event. 

This definition is natural because, far before the event, the hidden dynamics associated with the future extreme episode have typically not yet developed, so future observations provide little additional information at the early stages and and $\KL(t)$ remains small. Near the event time $t_\ast$, part of the precursor information has already propagated into the observed variables, making the filtering estimate progressively closer to the smoothing estimate and reducing the discrepancy again. Therefore, the maximizer of $\KL(t)$ is expected to occur during the intermediate stage when the hidden triggering mechanism is active but has not yet become visible in the observed components, with the first exceedance of the threshold $\kappa$, $t_{\mathrm{on}}^{(\kappa, T_{pre})}$, marking the onset of this latent mechanism.

From a dynamical perspective, $t_{\mathrm{on}}^{(\kappa, T_{pre})}$ identifies the transition from a background state to a precursor state in which latent instabilities, unresolved forcing, or hidden nonlinear interactions begin to organize and induce the forthcoming extreme event. It thus provides a practical estimate of when the extreme event starts in the hidden variables, rather than when it becomes apparent in the observations alone. In this sense, the proposed diagnostic can be interpreted as a backward, path-wise information-based measure that identifies when hidden states begin to exert a causal influence on the observed extreme event \cite{andreou2025bridging}.

Note that if no threshold crossing occurs within the search window, one may set $t_{\mathrm{on}}^{(\kappa, T_{pre})}=t_\ast$ or regard the event as having no clearly detectable precursor under the chosen criterion. The value of $t_{\mathrm{on}}^{(\kappa, T_{pre})}$ depends on $\kappa$ and $T_{pre}$, which must be chosen adequately to balance the sensitivity to precursor dynamics with robustness to noise.

\subsubsection{Full smoothing versus finite-lag smoothing and temporal influence range}

The previous diagnostic identifies when the hidden mechanisms begin. We now quantify how long their influence persists. This addresses a complementary question: whether the hidden precursor merely triggers the extreme event or continues to shape the system evolution beyond the event itself.

For each $t<t_\ast$ and lag $\tau>0$, define the finite-lag smoother \cite{cohn1994fixed, todling1998suboptimal}
\begin{equation}
p_{\text{s}}^{(\tau)}(\mathbf y,t)
=
p\big(\mathbf Y(t)\mid \mathbf X_{0:t+\tau}\big),
\end{equation}
which uses observations only up to the future time $t+\tau$ rather than the full interval $[0,T]$.

We compare the finite-lag smoother with the full smoother through
\begin{equation}\label{eq:KL_finite_tau}
\KL^{(\tau)}(t)
=
\KL\!\big(p_{\text{s}}(\cdot,t)\,\|\,p_{\text{s}}^{(\tau)}(\cdot,t)\big).
\end{equation}
When $\tau=0$, the finite-lag smoother reduces to the filtering distribution, and therefore $\KL^{(\tau)}(t)$ recovers the filter-smoother discrepancy $\KL(t)$ in \eqref{eq:KLt_path}.
For fixed $t$, the quantity $\KL^{(\tau)}(t)$ decreases as $\tau$ increases, since more future observations are incorporated \cite{andreou2026assimilative}. Thus, for a prescribed tolerance $\eta>0$, we define the {thresholded influence range} at time $t$ by
\begin{equation}
\tau^{(\eta)}(t)
=
\inf\big\{\tau>0:\KL^{(\tau)}(t)\le\eta\big\}.
\end{equation}
This is the minimal amount of future information required to accurately reconstruct the hidden state at time $t$ under an $\eta$-tolerance. If $\tau^{(\eta)}(t)$ is large, then the hidden mechanisms active at time $t$ continue to influence the system over a long future horizon.

Since $\tau^{(\eta)}(t)$ depends on the subjective tolerance level $\eta$, it is also useful to introduce a threshold-free measure. We therefore define the {integrated influence range} \cite{andreou2026assimilative}
\begin{equation}\label{T_range}
\mathcal T(t)
=
\frac{1}{\KL(t)}
\int_{0}^{\KL(t)} \tau^{(\eta)}\,\mathrm d\eta,
\end{equation}
which averages the subjective influence range over all admissible tolerance levels. The normalization by $\KL(t)$ ensures that $\mathcal T(t)$ has units of time.

The quantities $\tau^{(\eta)}(t)$ and $\mathcal T(t)$ are analogous to the autocorrelation function and decorrelation time, respectively, used to quantify memory in stochastic systems \cite{andreou2026assimilative}. The influence ranges measure the forward influence of hidden precursor dynamics on the development of an extreme event.
For each fixed $t$, the integrated influence range $\mathcal T(t)$ provides a trajectory-level measure of the typical time horizon over which hidden precursors affect the observed evolution. In particular, if $\mathcal T(t)$ extends beyond $t_\ast$, then the hidden mechanisms influence not only the onset of the event but also its subsequent evolution or recovery phase.

To summarize, the onset time in \eqref{t_on} and the influence ranges in \eqref{T_range} characterize distinct but complementary temporal aspects of hidden mechanisms. The former identifies when the precursor first emerges, while the latter quantifies how long its effects persist.
\subsection{Statistical study of extreme events}

The path-wise analysis in the previous subsection characterizes when hidden mechanisms emerge and how long their influence persists along individual trajectories. We now develop a complementary statistical viewpoint for studying extreme events.

\subsubsection{Information-theoretic sensitive directions for extreme-event triggering}

We begin by identifying which directions in the hidden-state space are most strongly associated with extreme events. Let $\mathcal E_t$ denote an extreme-event set defined through the observed variables at time $t$, for example,
\begin{equation}\label{eq:EE_set}
    \mathcal E_t=\{\mathbf X(t)\in\mathcal A\},
\end{equation}
where $\mathcal A$ corresponds to a threshold exceedance or another extreme-event criterion (e.g. rare transitions, excursions, or regime shifts) on $\mathbf{X}$ or a function of it (observable) \cite{coles2001introduction, farazmand2019extreme}. Using the conditional Gaussian mixture representation described in Section \ref{Sec:Framework}, one can construct both the unconditional hidden-state distribution
\begin{equation}\label{unconditional_distribution}
p_t(\mathbf y)=p(\mathbf Y(t) = \mathbf y),
\end{equation}
and the extreme-event-conditioned hidden-state distribution
\begin{equation}\label{conditional_distribution}
p_t(\mathbf y\mid\mathcal E_t)=p(\mathbf Y(t) = \mathbf y\mid\mathcal E_t).
\end{equation}
Comparing these two distributions reveals how hidden states are reorganized during the onset and active phases of extreme events.

A fundamental question is to determine which direction in the hidden-state space is most sensitive to the occurrence of observed extremes. In other words, we seek the direction along which the hidden variables exhibit the strongest statistical change when conditioning on an extreme event. To this end, define the scalar projection
\begin{equation} \label{eq:scalar_proj}
z_{\mathbf v}(t)=\mathbf v^\mathtt{T}\mathbf Y(t),
\end{equation}
where $\mathbf v\in\mathbb R^{n_\mathbf{Y}}$ is a unit vector. Let $p_t^{\mathbf v}(z)$ and $p_t^{\mathbf v}(z\mid\mathcal E_t)$ denote the corresponding projected distributions under the unconditional and event-conditioned laws.

We define the directional sensitivity score by
\begin{equation}
\mathcal J_t(\mathbf v)
=
\KL\!\left(
p_t^{\mathbf v}(z\mid\mathcal E_t)
\,\|\,
p_t^{\mathbf v}(z)
\right),
\qquad \|\mathbf v\|=1.
\end{equation}
The most sensitive direction along which the hidden variables exhibit the strongest statistical change in the event-conditioned distribution is then given by
\begin{equation}\label{eq:vstar_general}
\mathbf v_\ast(t)
=
\arg\max_{\|\mathbf v\|=1}\mathcal J_t(\mathbf v).
\end{equation}

This direction is well-defined, exists, and is unique under mild conditions \cite{boyd2004convex, nesterov2018lectures}. Equation \eqref{eq:vstar_general} identifies the one-dimensional projection of the hidden variables whose probability distribution is altered most strongly by conditioning on the extreme event. It therefore provides a statistical characterization of the hidden pattern most relevant for triggering or sustaining extremes. The proposed framework can be naturally generalized to identify multiple sensitive directions by seeking an optimal low-dimensional subspace rather than a single vector.

In general, the projections of both distribution \eqref{unconditional_distribution} and \eqref{conditional_distribution} are non-Gaussian, even in conditional Gaussian nonlinear systems. Nevertheless, Gaussian approximations with matched moments provide a tractable surrogate that facilitates the optimization problem and yields an approximate sensitive direction.

\begin{proposition}[Most sensitive directions under a Gaussian approximation]
Suppose the two distributions are approximated by Gaussian laws with matching moments:
\begin{align}
p_t(\mathbf y)
&\approx
\mathcal N(\boldsymbol\mu_0(t),\mathbf R_0(t)),\\
p_t(\mathbf y\mid\mathcal E_t)
&\approx
\mathcal N(\boldsymbol\mu_{\mathcal E}(t),\mathbf R_{\mathcal E}(t)).
\end{align}
Then the projected distributions of \eqref{eq:scalar_proj} are one-dimensional Gaussians:
\begin{align}
p_t(z_{\mathbf v})
&=
\mathcal N\!\big(\mathbf v^\mathtt{T}\boldsymbol\mu_0,\mathbf v^\mathtt{T}\mathbf R_0\mathbf v\big),\\
p_t(z_{\mathbf v}\mid\mathcal E_t)
&=
\mathcal N\!\big(\mathbf v^\mathtt{T}\boldsymbol\mu_{\mathcal E},\mathbf v^\mathtt{T}\mathbf R_{\mathcal E}\mathbf v\big).
\end{align}
Using the closed-form expression of the relative entropy in \eqref{eq:KL_gaussian_general}, we obtain
\begin{equation}\label{eq:projected_KL}
\mathcal J_t(\mathbf v)
=
\frac12\left[
\frac{\mathbf v^\mathtt{T}\mathbf R_{\mathcal E}\mathbf v}{\mathbf v^\mathtt{T}\mathbf R_0\mathbf v}
+
\frac{\big(\mathbf v^\mathtt{T}(\boldsymbol\mu_{\mathcal E}-\boldsymbol\mu_0)\big)^2}
{\mathbf v^\mathtt{T}\mathbf R_0\mathbf v}
-1
+
\log
\frac{\mathbf v^\mathtt{T}\mathbf R_0\mathbf v}
{\mathbf v^\mathtt{T}\mathbf R_{\mathcal E}\mathbf v}
\right].
\end{equation}
This expression shows that directional sensitivity contains two distinct contributions: coherent shifts in the mean state and changes in variability.
\end{proposition}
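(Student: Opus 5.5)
The argument has two steps. First we identify the laws of the projections $z_{\mathbf v}=\mathbf v^\mathtt{T}\mathbf Y(t)$. Then we substitute them into the Gaussian relative entropy formula \eqref{eq:KL_gaussian_general} with $n_\mathbf{Y}$ replaced by one.

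For the first step we use the standard fact that linear images of Gaussian vectors are Gaussian. If $\mathbf Y\sim\mathcal N(\boldsymbol\mu,\mathbf R)$, then the characteristic function of $\mathbf v^\mathtt{T}\mathbf Y$ is
\begin{equation*}
\mathbb E\big[e^{is\mathbf v^\mathtt{T}\mathbf Y}\big]=\exp\!\big(is\,\mathbf v^\mathtt{T}\boldsymbol\mu-\tfrac12 s^2\,\mathbf v^\mathtt{T}\mathbf R\mathbf v\big).
\end{equation*}
Hence $z_{\mathbf v}\sim\mathcal N(\mathbf v^\mathtt{T}\boldsymbol\mu,\mathbf v^\mathtt{T}\mathbf R\mathbf v)$. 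Applying this to the moment-matched approximations $\mathcal N(\boldsymbol\mu_0,\mathbf R_0)$ and $\mathcal N(\boldsymbol\mu_{\mathcal E},\mathbf R_{\mathcal E})$ gives the two displayed one-dimensional laws. We should note that moment matching commutes with projection: the mean and variance of $\mathbf v^\mathtt{T}\mathbf Y$ under the true law are exactly $\mathbf v^\mathtt{T}\boldsymbol\mu$ and $\mathbf v^\mathtt{T}\mathbf R\mathbf v$. So whether we approximate before or after projecting, we get the same Gaussian surrogate, and the claim is consistent.

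For the second step we insert these laws into \eqref{eq:KL_gaussian_general} with $p=p_t^{\mathbf v}(z\mid\mathcal E_t)$ and $q=p_t^{\mathbf v}(z)$. This order matches the definition of $\mathcal J_t$: $q$ carries the unconditional moments, so its variance $\mathbf v^\mathtt{T}\mathbf R_0\mathbf v$ sits in the denominators. With $n_\mathbf{Y}=1$ the terms are scalar:
\begin{itemize}
\item the signal term becomes $\big(\mathbf v^\mathtt{T}(\boldsymbol\mu_{\mathcal E}-\boldsymbol\mu_0)\big)^2/\mathbf v^\mathtt{T}\mathbf R_0\mathbf v$;
\item the trace term becomes $\mathbf v^\mathtt{T}\mathbf R_{\mathcal E}\mathbf v/\mathbf v^\mathtt{T}\mathbf R_0\mathbf v$;
\item $-n_\mathbf{Y}$ becomes $-1$;
\item the log-determinant ratio becomes $\log\big(\mathbf v^\mathtt{T}\mathbf R_0\mathbf v/\mathbf v^\mathtt{T}\mathbf R_{\mathcal E}\mathbf v\big)$.
\end{itemize}
Collecting these with the factor $\tfrac12$ gives \eqref{eq:projected_KL}. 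The closing interpretive sentence then follows by reading off the two groups of terms. The quadratic mean term is the signal, or coherent shift. The variance-ratio terms form the dispersion $\tfrac12(r-1-\log r)$ with $r=\mathbf v^\mathtt{T}\mathbf R_{\mathcal E}\mathbf v/\mathbf v^\mathtt{T}\mathbf R_0\mathbf v$, and this is nonnegative because $\log r\le r-1$.

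The only genuine obstacle is well-posedness of the scalar divergences. We need $\mathbf v^\mathtt{T}\mathbf R_0\mathbf v>0$ and $\mathbf v^\mathtt{T}\mathbf R_{\mathcal E}\mathbf v>0$ for every unit vector $\mathbf v$. We will assume $\mathbf R_0$ and $\mathbf R_{\mathcal E}$ are positive definite, which is implicit in writing the Gaussian densities and in using \eqref{eq:KL_gaussian_general}. For conditional Gaussian mixtures this holds whenever at least one smoother covariance $\mathbf R_{\mathbf s}$ in the mixture is nondegenerate: the mixture covariance is the average of the component covariances plus the covariance of the component means, so it dominates that average. With positive definiteness assumed, the remaining work is the routine substitution above.
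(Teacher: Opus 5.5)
Your proposal is correct and follows the same route as the paper: project the moment-matched Gaussians onto $\mathbf v$, then substitute into \eqref{eq:KL_gaussian_general} with $n_\mathbf{Y}=1$, taking $p$ as the event-conditioned law and $q$ as the unconditional law. Your additional remarks make explicit what the paper leaves implicit: that moment matching commutes with projection, that the dispersion term is nonnegative, and that $\mathbf R_0$ and $\mathbf R_{\mathcal E}$ must be positive definite.
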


The Gaussian approximation in \eqref{eq:projected_KL} also clarifies whether the discrepancy between the conditional and unconditional distributions is primarily driven by mean shifts or by covariance changes.

\begin{corollary}[Mean-shift dominated regime]
Suppose the covariance change is negligible, so that $\mathbf R_{\mathcal E}(t)\approx \mathbf R_0(t)$. Then \eqref{eq:projected_KL} reduces to maximizing
\begin{equation}
\frac{\big(\mathbf v^\mathtt{T}(\boldsymbol\mu_{\mathcal E}-\boldsymbol\mu_0)\big)^2}
{\mathbf v^\mathtt{T}\mathbf R_0\mathbf v}.
\end{equation}
The optimizer satisfies
\begin{equation}\label{eq:vstar_mean}
\mathbf v_\ast(t)
\propto
\mathbf R_0^{-1}(t)\big(\boldsymbol\mu_{\mathcal E}(t)-\boldsymbol\mu_0(t)\big).
\end{equation}
Thus, the most sensitive direction is the covariance-weighted mean shift between the event-conditioned and background hidden states.
\end{corollary}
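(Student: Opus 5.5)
Substituting $\mathbf R_{\mathcal E}=\mathbf R_0$ into \eqref{eq:projected_KL} reduces the problem to a generalized Rayleigh quotient, and Cauchy--Schwarz in the $\mathbf R_0$ inner product then identifies the maximizer. Write $\mathbf d=\boldsymbol\mu_{\mathcal E}-\boldsymbol\mu_0$ and $q(\mathbf v)=\mathbf v^\mathtt{T}\mathbf R_0\mathbf v$.

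With $\mathbf R_{\mathcal E}=\mathbf R_0$, the ratio $\mathbf v^\mathtt{T}\mathbf R_{\mathcal E}\mathbf v/q(\mathbf v)$ equals $1$ and cancels the $-1$. The logarithm becomes $\log 1=0$. Hence
$$\mathcal J_t(\mathbf v)=\tfrac12\,(\mathbf v^\mathtt{T}\mathbf d)^2/q(\mathbf v),$$
so maximizing $\mathcal J_t$ is the same as maximizing the stated quotient. The quotient is invariant under $\mathbf v\mapsto c\mathbf v$ for any $c\neq0$. The constraint $\|\mathbf v\|=1$ therefore only fixes the scale and sign of the maximizer, and we may optimize over $\mathbf v\neq0$. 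Since $\mathbf R_0$ is a covariance matrix, we assume it is positive definite, which is needed anyway for $\mathbf R_0^{-1}$ in \eqref{eq:vstar_mean} to make sense.

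Set $\mathbf w=\mathbf R_0^{1/2}\mathbf v$. Then $\mathbf v^\mathtt{T}\mathbf d=\mathbf w^\mathtt{T}\mathbf R_0^{-1/2}\mathbf d$ and $q(\mathbf v)=\|\mathbf w\|^2$. By Cauchy--Schwarz,
$$(\mathbf v^\mathtt{T}\mathbf d)^2\le \|\mathbf w\|^2\,\|\mathbf R_0^{-1/2}\mathbf d\|^2,$$
so the quotient is at most $\mathbf d^\mathtt{T}\mathbf R_0^{-1}\mathbf d$. Equality holds exactly when $\mathbf w\parallel\mathbf R_0^{-1/2}\mathbf d$, that is, when $\mathbf v\propto\mathbf R_0^{-1}\mathbf d$. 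Normalizing this vector gives \eqref{eq:vstar_mean}, and the maximal score is $\tfrac12\mathbf d^\mathtt{T}\mathbf R_0^{-1}\mathbf d$, half the Mahalanobis distance between the means.

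As a cross-check, the same direction follows from setting the gradient to zero. This gives the first-order condition
$$(\mathbf v^\mathtt{T}\mathbf d)\,q(\mathbf v)\,\mathbf d=(\mathbf v^\mathtt{T}\mathbf d)^2\,\mathbf R_0\mathbf v,$$
which, whenever $\mathbf v^\mathtt{T}\mathbf d\neq0$, forces $\mathbf R_0\mathbf v\parallel\mathbf d$.

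The main point needing care is the meaning of "$\approx$". The statement should be read with $\mathbf R_{\mathcal E}=\mathbf R_0$ exactly. If $\mathbf R_{\mathcal E}=\mathbf R_0+\boldsymbol\Delta$ with $\boldsymbol\Delta$ small, a Taylor expansion of $x-1-\log x$ about $x=1$ shows that the dispersion part is $O(\|\boldsymbol\Delta\|^2)$, uniformly in unit $\mathbf v$ because $q$ is bounded below by $\lambda_{\min}(\mathbf R_0)$. The mean-shift term therefore dominates, and the maximizer stays close to \eqref{eq:vstar_mean} provided $\mathbf d\neq0$. If $\mathbf d=0$, the quotient vanishes identically and the direction is undefined; this degenerate case should be noted as an exception.
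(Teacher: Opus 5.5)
Your proof is correct and follows the route the paper implicitly takes. The paper gives no separate argument beyond noting that $\mathbf R_{\mathcal E}=\mathbf R_0$ removes the dispersion terms in \eqref{eq:projected_KL} and leaves a generalized Rayleigh quotient; your whitening and Cauchy--Schwarz step supplies exactly the omitted maximization, and your perturbative reading of ``$\approx$'' is a useful addition. One small slip: the maximal score $\tfrac12\mathbf d^\mathtt{T}\mathbf R_0^{-1}\mathbf d$ is half the \emph{squared} Mahalanobis distance between the means, not half the distance.
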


Equation \eqref{eq:vstar_mean} shows that in the mean-shift dominated regime, the most sensitive direction emphasizes coherent changes in the mean that are large relative to the background variability encoded in $\mathbf R_0^{-1}$.

\begin{corollary}[Covariance-change dominated regime]
Suppose instead that the mean shift is negligible, so that $\boldsymbol\mu_{\mathcal E}(t)\approx\boldsymbol\mu_0(t)$. Then sensitivity in the hidden state under the extreme event is driven entirely by changes in uncertainty. A natural characterization is obtained through the generalized covariance contrast
\begin{equation}
\mathbf M(t)
=
\mathbf R_0^{-1/2}(t)\mathbf R_{\mathcal E}(t)\mathbf R_0^{-1/2}(t).
\end{equation}
The most sensitive directions are then given by the eigenvectors of $\mathbf M(t)$ (corresponding to the generalized eigenproblem $\mathbf R_{\mathcal E}(t)\mathbf{v}=\lambda \mathbf R_{0}(t)\mathbf{v}$) whose eigenvalues deviate most strongly from unity. These correspond to the directions in which variability is most amplified or suppressed during extreme events.
\end{corollary}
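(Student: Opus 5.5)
With zero mean shift, \eqref{eq:projected_KL} collapses to a function of a single scalar, and the question reduces to a Rayleigh-quotient problem. Setting $\boldsymbol\mu_{\mathcal E}=\boldsymbol\mu_0$ removes the quadratic mean term and leaves
\begin{equation*}
\mathcal J_t(\mathbf v)=\tfrac12\big[r(\mathbf v)-1-\log r(\mathbf v)\big],\qquad r(\mathbf v)=\frac{\mathbf v^\mathtt{T}\mathbf R_{\mathcal E}\mathbf v}{\mathbf v^\mathtt{T}\mathbf R_0\mathbf v}.
\end{equation*}
So $\mathcal J_t(\mathbf v)=\tfrac12 h(r(\mathbf v))$ with $h(r)=r-1-\log r$, and the plan is to study $h$ first and the range of $r$ second.

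The function $h$ is strictly convex on $(0,\infty)$, vanishes only at $r=1$, decreases on $(0,1)$ and increases on $(1,\infty)$. Hence maximizing $\mathcal J_t$ over directions is the same as pushing $r(\mathbf v)$ as far from $1$ as possible, in whichever direction makes $h$ largest.

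Next I would identify the attainable values of $r$. Substitute $\mathbf v=\mathbf R_0^{-1/2}\mathbf w$, assuming $\mathbf R_0$ is symmetric positive definite so the square root exists. Since $r$ is scale invariant, the unit-norm constraint on $\mathbf v$ can be dropped. In the new variable
\begin{equation*}
r(\mathbf v)=\frac{\mathbf w^\mathtt{T}\mathbf M\mathbf w}{\mathbf w^\mathtt{T}\mathbf w},
\end{equation*}
which is the Rayleigh quotient of the symmetric positive definite matrix $\mathbf M(t)$. By the Courant--Fischer theorem its range is exactly $[\lambda_{\min},\lambda_{\max}]$. It attains $\lambda_{\max}$ or $\lambda_{\min}$ only at the corresponding eigenvectors of $\mathbf M$. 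Mapping these back through $\mathbf v=\mathbf R_0^{-1/2}\mathbf w$ gives the solutions of the generalized eigenproblem $\mathbf R_{\mathcal E}\mathbf v=\lambda\mathbf R_0\mathbf v$, since $\mathbf M\mathbf w=\lambda\mathbf w$ is equivalent to it.

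Combining the two steps: a monotone-on-each-side function attains its maximum over an interval at an endpoint, so
\begin{equation*}
\max_{\|\mathbf v\|=1}\mathcal J_t(\mathbf v)=\tfrac12\max\{h(\lambda_{\max}),h(\lambda_{\min})\}.
\end{equation*}
The maximizer is therefore the generalized eigenvector whose eigenvalue gives the larger $h$. These are the directions in which variability is most amplified ($\lambda>1$) or most suppressed ($\lambda<1$).

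The main obstacle is making ``deviates most strongly from unity'' precise. The measure of deviation is $h(\lambda)$, not $|\lambda-1|$, and $h$ is asymmetric: it grows only logarithmically as $\lambda\to 0$ and linearly as $\lambda\to\infty$. So I would state the ranking explicitly in terms of $h(\lambda)$ and note that it agrees with $|\log\lambda|$ to leading order near $\lambda=1$.

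For the secondary sensitive directions, I would restrict to the $\mathbf R_0$-orthogonal complement of the previously chosen eigenvectors and repeat the same argument. This orders all eigenvectors by $h(\lambda_i)$ and justifies the plural ``directions'' in the statement.
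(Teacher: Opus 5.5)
Your argument is correct. The paper states this corollary without any proof, leaving it as an immediate consequence of \eqref{eq:projected_KL}, and your route is precisely the reasoning it leaves implicit: reduce $\mathcal J_t$ to $\tfrac12 h(r)$ with $h(r)=r-1-\log r$, recognize $r$ as the Rayleigh quotient of $\mathbf M(t)$, and maximize the decreasing-then-increasing $h$ at an endpoint of $[\lambda_{\min},\lambda_{\max}]$.

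It also usefully sharpens the statement in two places. The eigenvalues are ranked by $h(\lambda)$ rather than by an unspecified distance from unity, and the maximizer is identified as the generalized eigenvector $\mathbf v=\mathbf R_0^{-1/2}\mathbf w$ rather than the eigenvector $\mathbf w$ of $\mathbf M$ itself. The only slip is cosmetic: near $\lambda=1$ one has $h(\lambda)\approx\tfrac12(\log\lambda)^2$, so it is the ranking, not the value, that agrees with $|\log\lambda|$.
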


The matrix $\mathbf M(t)$ therefore plays the role of a relative covariance operator, identifying directions of maximal variability change between the background and event-conditioned hidden states.

The study of sensitive directions can be carried out at different times depending on the objective. At the event time $t=t_\ast$, the vector $\mathbf v_\ast(t_\ast)$ identifies hidden structures most strongly associated with the realized extreme state. At the onset time $t=t_{\mathrm{on}}^{(\kappa, T_{pre})}$, the vector $\mathbf v_\ast(t_{\mathrm{on}}^{(\kappa, T_{pre})})$ identifies hidden directions most relevant for triggering the event. Over an entire time interval, the trajectory $t\mapsto \mathbf v_\ast(t)$ reveals transitions from precursor mechanisms to growth dynamics and finally to recovery processes.

The direction $\mathbf v_\ast(t)$ therefore provides a compact summary of how hidden variables contribute to extreme events. It can help identify the most influential latent components for risk management and improve preparedness for the occurrence of future extremes.

As a final remark, the hidden-state distributions used to compute sensitive directions can be constructed from either filtering-based or smoothing-based conditional Gaussian mixtures as defined in Section \ref{Sec:Framework}. The smoothing version is most suitable for mechanism discovery because it uses the full observation record and provides the most accurate reconstruction of hidden dynamics. The filtering version \cite{chen2018efficient}, by contrast, uses only real-time information and therefore quantifies online detectability. Comparing the sensitive directions obtained from these two constructions offers an additional diagnostic. If the filtering and smoothing directions are already close before the event, then the hidden precursor is detectable in real time. If they differ substantially, then the triggering mechanism remains concealed until future observations are incorporated retrospectively. 

\subsubsection{Most probable hidden paths and representative trajectories}\label{Subsec:most_probable}

The distribution-based analysis in the previous sections identifies latent directions and probability structures associated with extreme events. A complementary question concerns the temporal evolution of the hidden variables: among all hidden trajectories consistent with the observations, what is the dominant route toward an extreme event? The smoothing distribution provides a natural probabilistic framework for addressing this question.

For simplicity, we assume a discrete-time setting with observations at times $t_0<t_1<\ldots < t_N$ and consider an observed realization $\mathbf X_{0:N}$ containing an extreme event. Let
\begin{equation}
p(\mathbf Y_{0:N}\mid \mathbf X_{0:N})
\end{equation}
denote the posterior law of the hidden path
\[
\mathbf Y_{0:N}=(\mathbf Y(t_0),\mathbf Y(t_1),\dots,\mathbf Y(t_N))
\]
conditioned on the full observation record. A most probable hidden trajectory is naturally defined through a maximum a-posteriori criterion. In the conditional Gaussian setting, this construction becomes explicit and computationally tractable \cite{liptser2001statistics}.

\begin{proposition}[Event-wise most probable hidden path]\label{prop:event_path}
Suppose the smoothing posterior of the hidden path is Gaussian:
\begin{equation}\label{eq:smoother_path_gaussian_prop}
\mathbf Y_{0:N}\mid \mathbf X_{0:N}
\sim
\mathcal N\!\big(\boldsymbol\mu^{\text{\normalfont{s}}}_{0:N},\mathbf\Sigma^{\text{\normalfont{s}}}_{0:N}\big).
\end{equation}
This is true for conditional Gaussian nonlinear systems \cite{liptser2001statistics}. Define the most probable hidden path by
\begin{equation}\label{eq:MAP_path_prop}
\mathbf Y^\ast_{0:N}
=
\arg\max_{\mathbf Y_{0:N}}\{
p(\mathbf Y_{0:N}\mid \mathbf X_{0:N})\}.
\end{equation}
Then, due to the Gaussianity of the posterior, the most probable path coincides with the posterior mean path:
\begin{equation}\label{eq:MAP_equals_mean_prop}
\mathbf Y^\ast_{0:N}
=
\boldsymbol\mu^{\text{\normalfont{s}}}_{0:N}.
\end{equation}
Moreover, deviations from the dominant path are quantified by the quadratic score
\begin{equation}\label{eq:quadratic_score_prop}
\mathcal S(\mathbf Y_{0:N})
=
\frac12
(\mathbf Y_{0:N}-\boldsymbol\mu^{\text{\normalfont{s}}}_{0:N})^\mathtt{T}
(\mathbf\Sigma^{\text{\normalfont{s}}}_{0:N})^{-1}
(\mathbf Y_{0:N}-\boldsymbol\mu^{\text{\normalfont{s}}}_{0:N}),
\end{equation}
which is proportional to the negative log-posterior up to an additive constant.
\end{proposition}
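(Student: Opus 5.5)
The plan is to write out the Gaussian posterior density explicitly and reduce the maximization in \eqref{eq:MAP_path_prop} to the minimization of a positive definite quadratic form. Set $d=(N+1)n_\mathbf{Y}$. I assume the smoother path covariance $\mathbf\Sigma^{\text{s}}_{0:N}$ is symmetric positive definite, which is needed for the density in \eqref{eq:smoother_path_gaussian_prop} to exist. Then
\begin{equation*}
p(\mathbf Y_{0:N}\mid \mathbf X_{0:N})
=
(2\pi)^{-d/2}\big(\det\mathbf\Sigma^{\text{s}}_{0:N}\big)^{-1/2}
\exp\!\big(-\mathcal S(\mathbf Y_{0:N})\big),
\end{equation*}
with $\mathcal S$ as in \eqref{eq:quadratic_score_prop}. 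Taking logarithms gives $-\log p(\mathbf Y_{0:N}\mid\mathbf X_{0:N})=\mathcal S(\mathbf Y_{0:N})+C$. The constant $C=\tfrac d2\log(2\pi)+\tfrac12\log\det\mathbf\Sigma^{\text{s}}_{0:N}$ depends only on the observed record $\mathbf X_{0:N}$ and not on the hidden path. This already proves the final claim, namely that the quadratic score is the negative log-posterior up to an additive constant.

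Next, I use that the logarithm is strictly increasing. Hence maximizing $p(\cdot\mid\mathbf X_{0:N})$ over $\mathbb R^{d}$ is the same as minimizing $\mathcal S$. Since $(\mathbf\Sigma^{\text{s}}_{0:N})^{-1}$ is also positive definite, $\mathcal S(\mathbf Y_{0:N})\ge 0$, and equality holds if and only if $\mathbf Y_{0:N}-\boldsymbol\mu^{\text{s}}_{0:N}=0$. So $\boldsymbol\mu^{\text{s}}_{0:N}$ is the unique global minimizer of $\mathcal S$, and therefore the unique maximizer in \eqref{eq:MAP_path_prop}. This gives \eqref{eq:MAP_equals_mean_prop}.

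As a cross-check, the gradient $\nabla\mathcal S=(\mathbf\Sigma^{\text{s}}_{0:N})^{-1}(\mathbf Y_{0:N}-\boldsymbol\mu^{\text{s}}_{0:N})$ vanishes only at the mean. The Hessian $(\mathbf\Sigma^{\text{s}}_{0:N})^{-1}$ is positive definite, so $\mathcal S$ is strictly convex.

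The only real obstacle is justifying the Gaussian path posterior and the invertibility of its covariance. For conditional Gaussian nonlinear systems \eqref{CGNS}, I take Gaussianity as given from \cite{liptser2001statistics}, as the statement itself does. The argument conditions on the observed path, under which the $\mathbf Y$ dynamics become linear with known coefficients, and observes that a linear-Gaussian joint law stays Gaussian after conditioning. For invertibility, I would argue that a nondegenerate $\mathbf b_2\mathbf b_2^\mathtt{T}$ makes each increment of the discrete-time hidden chain carry full-rank noise, so the joint prior covariance and hence the posterior covariance are nonsingular. If $\mathbf\Sigma^{\text{s}}_{0:N}$ were singular, I would restrict to its support: there the density exists, and the conclusion holds with a pseudo-inverse in $\mathcal S$.
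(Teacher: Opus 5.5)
Your proof is correct and follows the same reasoning the paper relies on. The paper gives no formal proof, only the remark that the posterior density is proportional to $\exp(-\mathcal S)$, so that smaller scores mean more probable paths; your explicit normalizing constant, the positive-definiteness argument for uniqueness of the minimizer, and your comment on the invertibility of $\mathbf\Sigma^{\text{s}}_{0:N}$ make that implicit argument rigorous.
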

The quadratic score in \eqref{eq:quadratic_score_prop} measures the Mahalanobis distance of a candidate hidden trajectory from the posterior mean path, i.e. the most probable path, under the smoothing distribution. Since the posterior density is proportional to $\exp(-\mathcal S)$, trajectories with smaller scores are more probable given the observations. This score can therefore be used to rank alternative hidden reconstructions, construct credible tubes around the dominant path, and assess how sharply the hidden mechanisms are constrained by the data.

Importantly, this proposition justifies the use of the smoothed mean trajectory as the dominant hidden evolution associated with an individual extreme event, while the smoothing covariance quantifies the uncertainty around that path.

When many extreme events are available, one seeks a representative trajectory for a family of events sharing a common mechanism. A direct arithmetic average of event-wise dominant paths may be misleading when some hidden trajectories are poorly constrained by the data. It is therefore natural to assign greater weight to events whose smoothing posteriors are more concentrated or less uncertain.

Let $\hat{\mathbf{Y}}^{(k)}_{0:\tilde{N}}$ denote the aligned dominant or most probable path of event $k$ expressed in a relative time coordinate with discrete times denoted $\tau_0 < \tau_1 < \ldots < \tau_{\tilde{N}}$ (for example, aligned and measured from the onset time $t_{on}^{(\kappa, T_{pre})}$ in \eqref{t_on} or peak time $t_*$), and let $\mathbf\Sigma^{\text{s},(k)}_{0:\tilde{N}}$ be the corresponding path covariance.

\begin{proposition}[Likelihood-weighted representative path]\label{prop:stat_path}
Consider a collection of $K$ extreme events with aligned dominant paths $\{\hat{\mathbf Y}^{(k)}_{0:\tilde{N}}\}_{k=1}^K$. Let
\begin{equation}\label{eq:weight_score_prop}
J_k
=
\frac12\log\det\mathbf\Sigma^{\text{\normalfont{s}},(k)}_{0:\tilde{N}},
\end{equation}
or, more generally, any other scalar measure of posterior path uncertainty. Define normalized weights
\begin{equation}\label{eq:weights_prop}
w_k
=
\frac{\exp(-J_k)}{\sum_{j=1}^K\exp(-J_j)}.
\end{equation}
Then the representative most probable path is
\begin{equation}\label{eq:weighted_path_prop}
\bar{\mathbf Y}_{0:\tilde{N}}
=
\sum_{k=1}^K w_k\,\hat{\mathbf Y}^{(k)}_{0:\tilde{N}}.
\end{equation}
This construction defines a probability distribution over the negative uncertainty scores of the aligned dominant paths that assigns larger weights to events with smaller posterior uncertainty, which therefore contribute more strongly to the representative hidden trajectory.
\end{proposition}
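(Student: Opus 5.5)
This statement is essentially a definition, so the proof consists of checking that the objects are well defined and have the claimed properties. I would proceed in three short steps.

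\textbf{Step 1 (well-definedness of the scores).} For each event $k$ the aligned path covariance $\mathbf\Sigma^{\text{s},(k)}_{0:\tilde N}$ is a smoothing covariance. Under the Gaussian path posterior of Proposition~\ref{prop:event_path}, restricted to the aligned window, it is symmetric positive definite as long as the noise $\mathbf b_2\mathbf b_2^\mathtt{T}$ is nondegenerate. Then $\det\mathbf\Sigma^{\text{s},(k)}_{0:\tilde N}>0$, so $J_k$ in \eqref{eq:weight_score_prop} is a finite real number. Each $\exp(-J_k)=(\det\mathbf\Sigma^{\text{s},(k)}_{0:\tilde N})^{-1/2}$ is strictly positive.

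\textbf{Step 2 (weights form a probability distribution, monotone in uncertainty).} The denominator in \eqref{eq:weights_prop} is a finite sum of positive terms. Hence $w_k>0$ and $\sum_k w_k=1$, so $\{w_k\}$ is a probability distribution on $\{1,\dots,K\}$; it is the softmax of the negative scores $\{-J_k\}$. The map $J\mapsto e^{-J}$ is strictly decreasing, and the denominator is common to all $k$. Therefore $J_k<J_j$ implies $w_k>w_j$: events with smaller posterior uncertainty receive larger weight. The same argument applies verbatim when $J_k$ is replaced by any other real-valued uncertainty score, which justifies the "more generally" clause.

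\textbf{Step 3 (interpretation of the representative path).} I would show that $\bar{\mathbf Y}_{0:\tilde N}$ in \eqref{eq:weighted_path_prop} is a convex combination of the dominant paths, which by Proposition~\ref{prop:event_path} are the event-wise MAP paths. Consequently it lies in their convex hull, and it reduces to the arithmetic mean when all uncertainties are equal. It is also the unique minimizer of $\sum_k w_k\|\mathbf Y-\hat{\mathbf Y}^{(k)}\|^2$, which one sees by setting the gradient to zero. To motivate the name "likelihood-weighted," I would note that $e^{-J_k}$ is proportional to the peak density $(2\pi)^{\dim/2}p(\hat{\mathbf Y}^{(k)}\mid\mathbf X^{(k)})$ of the Gaussian posterior at its mode. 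So $w_k$ weights each event by the normalized posterior density attained at its most probable path, which matches the quadratic score \eqref{eq:quadratic_score_prop} vanishing at the mode.

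\textbf{Main obstacle.} The only nontrivial point is positive definiteness of the aligned covariance in Step 1. I would handle it by assuming nondegenerate noise, and otherwise by replacing $\log\det$ with a pseudo-determinant.
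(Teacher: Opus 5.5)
Your proposal is correct and matches how the paper treats this result. The paper gives no separate proof; it regards the proposition as a definition whose stated properties follow at once from the softmax form of $w_k$, which is what your Steps 1--2 check (finiteness of $J_k$, positivity and normalization of the weights, and strict monotonicity of $J\mapsto e^{-J}$). Your Step 3 adds justification the paper does not give, and it is sound: $\bar{\mathbf Y}_{0:\tilde N}$ is the weighted least-squares barycenter of the dominant paths, and $e^{-J_k}=(2\pi)^{d/2}\,p(\hat{\mathbf Y}^{(k)}_{0:\tilde N}\mid\mathbf X^{(k)})$ is exactly the rescaled posterior density at the mode (an equality, not mere proportionality). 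The factor $(2\pi)^{d/2}$ cancels in the normalization because all aligned paths share the same dimension $d$.
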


\noindent
The weighted path \eqref{eq:weighted_path_prop} summarizes the dominant latent evolution of a class of extremes while reducing the influence of poorly identified hidden reconstructions.

To understand the statistical behavior of the hidden dynamics relative to the observed dynamics, a representative observed path can similarly be constructed. This can simply be done by averaging across the collection of aligned observed extreme paths. 

Beyond estimating a representative trajectory, it is equally important to quantify its uncertainty. This uncertainty has two distinct sources: posterior uncertainty within each event and variability across different events.

\begin{proposition}[Uncertainty decomposition for representative paths]\label{prop:uq_path}
Let $\bar{\mathbf Y}_{0:\tilde{N}}$ be the representative path defined in \eqref{eq:weighted_path_prop}. Denote by $\mathbf\Sigma^{\text{\normalfont{s}},(k)}_{0:\tilde{N}}$ the marginal smoother covariance of event $k$ at relative times $\tau_0<\tau_1<\ldots<\tau_{\tilde{N}}$. Then the total uncertainty around the representative path is \cite{kurkoski2009single, minka2013expectation}
\begin{equation}\label{eq:total_uncertainty_prop}
\bar{\mathbf\Sigma}_{0:\tilde{N}}
=
\sum_{k=1}^K w_k
\left[
\mathbf\Sigma^{\text{\normalfont{s}},(k)}_{0:\tilde{N}}
+
\big(\hat{\mathbf Y}^{(k)}_{0:\tilde{N}}-\bar{\mathbf Y}_{0:\tilde{N}}\big)
\big(\hat{\mathbf Y}^{(k)}_{0:\tilde{N}}-\bar{\mathbf Y}_{0:\tilde{N}}\big)^\mathtt{T}
\right].
\end{equation}
The first term represents the within-event posterior uncertainty, while the second term captures the between-event variability of the dominant paths. 
\end{proposition}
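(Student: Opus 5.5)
The plan is to read the statement as the second-moment identity of a finite mixture and then apply the law of total covariance. With the weights $w_k$ from Proposition~\ref{prop:stat_path} (nonnegative, summing to one), we introduce a discrete random index $\kappa\in\{1,\dots,K\}$ with $\mathbb P(\kappa=k)=w_k$. Conditional on $\kappa=k$, the aligned hidden path $\mathbf Y_{0:\tilde N}$ is distributed according to event $k$'s smoothing posterior. By Proposition~\ref{prop:event_path}, that posterior has mean equal to the dominant path $\hat{\mathbf Y}^{(k)}_{0:\tilde N}$ and covariance $\mathbf\Sigma^{\text{s},(k)}_{0:\tilde N}$. The object being described is then the mixture law $\sum_k w_k\,\mathcal N(\hat{\mathbf Y}^{(k)}_{0:\tilde N},\mathbf\Sigma^{\text{s},(k)}_{0:\tilde N})$. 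Gaussianity plays no role beyond supplying these first two moments; the argument only needs each component's mean and covariance. We also record that the time alignment is a deterministic relabelling of indices, so it does not alter any component's moments.

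The first step is to compute the mixture mean. By the tower property (as in Proposition~\ref{prop:smoothing_mixture}),
\[
\mathbb E[\mathbf Y_{0:\tilde N}]=\mathbb E\big[\mathbb E[\mathbf Y_{0:\tilde N}\mid\kappa]\big]=\sum_k w_k\hat{\mathbf Y}^{(k)}_{0:\tilde N}.
\]
This equals $\bar{\mathbf Y}_{0:\tilde N}$ by \eqref{eq:weighted_path_prop}, so the representative path is exactly the mixture mean. The second step is the law of total covariance,
\[
\mathrm{Cov}(\mathbf Y)=\mathbb E[\mathrm{Cov}(\mathbf Y\mid\kappa)]+\mathrm{Cov}(\mathbb E[\mathbf Y\mid\kappa]).
\]
Its first term is $\sum_k w_k\mathbf\Sigma^{\text{s},(k)}_{0:\tilde N}$, which is the within-event term. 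Because the mean of $\mathbb E[\mathbf Y\mid\kappa]$ is $\bar{\mathbf Y}$, its second term is $\sum_k w_k(\hat{\mathbf Y}^{(k)}-\bar{\mathbf Y})(\hat{\mathbf Y}^{(k)}-\bar{\mathbf Y})^\mathtt{T}$, which is the between-event term. Adding the two gives \eqref{eq:total_uncertainty_prop}.

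For a more self-contained check, we can expand $\sum_k w_k\,\mathbb E_k[(\mathbf Y-\bar{\mathbf Y})(\mathbf Y-\bar{\mathbf Y})^\mathtt{T}]$ by writing $\mathbf Y-\bar{\mathbf Y}=(\mathbf Y-\hat{\mathbf Y}^{(k)})+(\hat{\mathbf Y}^{(k)}-\bar{\mathbf Y})$. The cross terms vanish because $\mathbb E_k[\mathbf Y-\hat{\mathbf Y}^{(k)}]=0$. Notably, this holds for each $k$ individually and does not use the definition of $\bar{\mathbf Y}$.

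The only real obstacle is interpretive rather than computational: we must specify what "the total uncertainty around the representative path" means. I would define it as the covariance of the mixture law centred at $\bar{\mathbf Y}$ and state this convention at the outset. Two consequences follow from that choice. First, the weights need not be $\exp(-J_k)$; any probability vector works. Second, the term-by-term expansion shows that the formula is the second moment about $\bar{\mathbf Y}$ for any choice of centre, and only the identification of $\bar{\mathbf Y}$ as the mixture mean uses \eqref{eq:weighted_path_prop}.
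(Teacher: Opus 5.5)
Your proposal is correct and is essentially the argument the paper relies on. The paper gives no separate proof; it only cites the standard moment-matching formula for collapsing a Gaussian mixture to a single Gaussian, which is exactly your law-of-total-covariance computation for $\sum_k w_k\,\mathcal N(\hat{\mathbf Y}^{(k)}_{0:\tilde N},\mathbf\Sigma^{\text{s},(k)}_{0:\tilde N})$ with mean $\bar{\mathbf Y}_{0:\tilde N}$, and your explicit convention that ``total uncertainty'' means the covariance of this weighted mixture is a clarification the paper leaves implicit.
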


\noindent
This decomposition distinguishes uncertainty caused by imperfect hidden-state reconstruction from genuine heterogeneity among events. Hence it provides a confidence envelope around the representative trajectory together with a statistical measure of mechanism variability. Together, the representative path and total uncertainty provide a tractable Gaussian characterization of the dominant hidden-event dynamics and their variability across events.

\subsection{Classification of extreme events with multiple mechanisms}

The trajectory-wise and statistical analyses developed thus far characterize extreme events at two complementary levels: individual realizations and aggregate probability distributions. However, in complex systems, extreme events are often generated through multiple distinct mechanisms. As a result, treating all extremes as a single population may obscure important structures and mix qualitatively different pathways. We therefore introduce a classification step that separates extreme events into coherent groups before applying the previous analyses.

Suppose a collection of extreme-event realizations has been identified through an event criterion $\mathcal E_t$ in \eqref{eq:EE_set}. For each event $k$, we construct a feature vector $\mathbf z^{(k)} \in \mathbb R^m$, which summarizes the hidden-state behavior associated with that event.

A key point is that these features are not restricted to static snapshots of the system state. Instead, they may include dynamic and mechanistic information extracted from the full event evolution. Typical examples include onset times, precursor durations, growth rates, peaking times, recovery times, or lead-lag relationships; for concrete examples of such features, see the Appendix. Thus, each extreme event is embedded into a feature space that reflects not only its instantaneous state, but also the dynamical pathway through which the event develops.

To identify distinct mechanisms, we apply a clustering algorithm to the set of chosen feature vectors $\{\mathbf z^{(k)}\}_{k=1}^K$. Depending on the data structure, one may use methods such as $k$-means, Gaussian mixture models, hierarchical clustering, or density-based clustering. This yields a cluster partition $\{\mathcal C_1,\ldots,\mathcal C_M\}$, where each cluster or class contains events with similar hidden-state signatures and dynamical evolution.

\begin{proposition}[Cluster-based mechanism separation]
If two clusters exhibit statistically distinct feature distributions, then they correspond to different classes of extreme-event mechanisms, characterized by different latent-state organizations, precursor structures, or triggering pathways.
\end{proposition}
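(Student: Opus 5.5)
The statement is essentially interpretive, so the first step is to make it precise enough to prove. I will fix the meaning of "class of mechanism" through the feature map. Each event $k$ determines its hidden-state reconstruction: the smoother path law $\mathcal N(\boldsymbol\mu^{\mathrm s,(k)}_{0:\tilde N},\mathbf\Sigma^{\mathrm s,(k)}_{0:\tilde N})$ together with derived quantities such as $t_{\mathrm{on}}^{(\kappa,T_{pre})}$, $\mathcal T(t)$ and $\mathbf v_\ast$. The feature vector is then a measurable functional $\mathbf z^{(k)}=\Phi(\text{latent organization of event }k)$. A "mechanism class" is read as the law of this latent organization, that is, the distribution of the hidden pathway descriptors, restricted to the events in a cluster. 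The claim becomes: if the feature laws $P_{\mathcal C_i}\circ\Phi^{-1}$ and $P_{\mathcal C_j}\circ\Phi^{-1}$ differ, then the underlying latent laws $P_{\mathcal C_i}$ and $P_{\mathcal C_j}$ differ.

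In this form the proof is a contrapositive argument using push-forward measures. Suppose the two clusters had the same latent organization, meaning the same distribution over smoother paths and the quantities derived from them. Every feature is a deterministic measurable function of that object. The push-forward of equal measures under the same $\Phi$ is then equal, so the feature distributions would coincide. This contradicts the hypothesis, so the latent organizations differ.

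To make the conclusion quantitative and information-theoretic, matching Section~\ref{Sec:Framework}, I will invoke the data-processing inequality for relative entropy:
\[
\KL\big(P_{\mathcal C_i}\circ\Phi^{-1}\,\|\,P_{\mathcal C_j}\circ\Phi^{-1}\big)\le \KL\big(P_{\mathcal C_i}\,\|\,P_{\mathcal C_j}\big).
\]
Hence any positive feature-level divergence certifies at least that much divergence between the latent mechanisms. When the features are Gaussian-approximated, the left side can be evaluated with \eqref{eq:KL_gaussian_general}. Its signal and dispersion parts then indicate whether the mechanisms differ through mean pathways or through variability, mirroring the corollaries on sensitive directions.

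The main obstacle is the empirical gap. The hypothesis "statistically distinct" refers to finite samples, so I will interpret it as rejection of the null of equal feature laws by a consistent two-sample test. The conclusion then holds with the test's confidence level, using the Monte Carlo consistency of Proposition~\ref{prop:mc_accuracy} for the mixture-based features.

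A second subtlety is that the converse fails: identical feature laws need not imply identical mechanisms unless $\Phi$ is injective or sufficient. I will state this explicitly, so the proposition is proved only in the stated direction.
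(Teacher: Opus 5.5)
The paper gives no proof of this proposition; it is stated as an interpretive principle motivating the clustering step. Your argument therefore has to stand on its own, and as written it has a genuine gap: your formalization makes the statement vacuous. The clusters are not fixed in advance; they are produced by clustering the very features $\mathbf z^{(k)}$. With $k$-means, as in the paper's second example, $\mathcal C_i=\{k:\mathbf z^{(k)}\in V_i\}$ for disjoint Voronoi cells $V_i$. Any two clusters therefore have feature laws with disjoint supports. Your latent laws $P_{\mathcal C_i}$, conditioned on the disjoint sets $\Phi^{-1}(V_i)$, are then automatically mutually singular, and both sides of your data-processing inequality equal $+\infty$. Your push-forward argument is thus valid for every partition, including one that cuts a single homogeneous, unimodal event population into three pieces. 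It cannot separate genuinely distinct mechanisms from an arbitrary slicing of one mechanism's continuum.

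The finite-sample step inherits this problem. Testing equality of feature laws on the same events that defined the clusters is double dipping: rejection occurs with probability near one even when only one mechanism is present. So no confidence level attaches to the intended conclusion. Proposition~\ref{prop:mc_accuracy} does not help here, since it only gives unbiasedness and the $1/K$ variance of the mixture average $\widehat I_K(t)$. What is missing is a generative notion of mechanism, e.g.\ a latent label $L$ with event law $\sum_m\pi_m P_m$. The claim to prove is then that clusters recover $L$. That requires separation or identifiability assumptions on the component feature laws $P_m\circ\Phi^{-1}$ and, empirically, sample splitting or a selection-adjusted test.

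A second gap is your premise that every feature is a measurable function of the latent reconstruction alone. This does not match the features actually used, which include purely observable quantities: the signed peak $u(t_\ast)$ or $V_{\mathrm{peak}}$, event duration, and pre- and post-peak slopes of $u$ and $V$. In the topographic example no smoother is computed at all. Two clusters can then differ only in observable coordinates. The topographic Clusters 1 and 3 are separated mainly by the sign of the pre-event growth of $V$, while their one- and two-dimensional hidden marginals are close to climatology. So ``equal latent laws $\Rightarrow$ equal feature laws'' fails. Including $\mathbf X$ in the ``latent organization'' restores the implication, but it weakens the conclusion to a difference in joint observed--hidden behavior rather than in latent-state organization.

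Two smaller points. First, $\mathbf v_\ast$ is a population-level object built from $p_t(\mathbf y)$ and $p_t(\mathbf y\mid\mathcal E_t)$, not a per-event descriptor. Second, the moment-matched Gaussian relative entropy is not a lower bound on the true feature-level $\KL$, so the ``certifies at least that much'' reading does not carry over to the Gaussian surrogate. Your remark that the converse fails is correct and worth keeping.
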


The practical value of this step is twofold. First, it reveals whether the system admits multiple routes to extreme behavior rather than a single universal mechanism. Second, after clustering, one can perform the trajectory-wise and statistical analyses separately within each cluster. This leads to mechanism-specific identification of precursors, sensitive directions, representative paths, and probability laws that would otherwise be masked in the full mixed population.
For example, one cluster may correspond to events driven primarily by coherent mean shifts in hidden variables, while another may reflect variance amplification, delayed precursor growth, or intermittent bursts. Thus, clustering provides a bridge between complex observed extremes and interpretable latent mechanisms.
This classification framework resolves heterogeneity among extremes and enables a more refined understanding of multiple distinct or mixed mechanisms in complex systems.

\section{Numerical Investigations of Extreme Events in Prototype Systems}\label{Sec:Numerics}

We now illustrate the proposed methodology through three prototype systems of increasing complexity. The examples are chosen to emphasize different aspects of the framework rather than to exhaust all possible applications. The first example is a minimal intermittent model with one hidden variable and is used to study path-wise filtering versus smoothing hidden-state reconstruction and online detectability of extreme events. The second example introduces multiple hidden mechanisms for triggering extremes (stochastic damping- and force-driven processes) and is used to study clustering, representative trajectories, and mechanism-dependent conditional statistics. The first two examples use conditional Gaussian models to illustrate the diagnostics in settings where exact posterior inference is available. The third example considers a topographic flow model with richer spatial structure and demonstrates how the ideas developed here extend from the analytically tractable conditional Gaussian structure to more general nonlinear systems via Monte Carlo simulation.

\subsection{Hidden precursors and the onset of extreme events in an intermittent mode}

We begin with a two-dimensional stochastic system consisting of one observed variable $u(t)$ and one hidden variable $\gamma(t)$:
\begin{equation}\label{SPEKF_M_num}
\begin{split}
\frac{\d u}{\d t} &= -d_u u +c\gamma u + F_u + \sigma_u \dot{W}_u,\\
\frac{\d \gamma}{\d t} &= -d_\gamma \gamma - cu^2 + F_\gamma + \sigma_\gamma \dot{W}_\gamma.
\end{split}
\end{equation}
Here, $u$ denotes the observed mode amplitude, while $\gamma$ is a stochastic damping coefficient. The variable $\gamma$ is hidden and modulates the instantaneous growth or decay of the observed mode through the stochastic damping term $c\gamma u$ with $c>0$. Extreme events in $u$ occur when the effective damping becomes sufficiently weak, or equivalently, when $\gamma$ becomes large and positive, $\gamma > \frac{d_u}{c}$. This system can be interpreted as a reduced-order stochastic parameterization of a single Fourier mode within a larger intermittent system. It has been widely used as a prototype model for studying forecasting, data assimilation, and uncertainty quantification of extreme events in atmospheric low-frequency variability \cite{majda2012lessons, branicki2013non}. It has also been adopted as a cheap and analytically tractable surrogate model for stochastic superparametrization \cite{keating2012new} and for filtering the Navier-Stokes equation and other complex geophysical systems \cite{branicki2018accuracy, harlim2010filtering}.

A nonlinear feedback term $-cu^2$ is included in the governing equation for the stochastic damping $\gamma$. This ensures that the total energy associated with the nonlinear terms ($c\gamma u$ and $-cu^2$) is conserved, a key feature for mimicking realistic turbulent dynamics \cite{majda2013physics}. To see this, multiply the two equations by $u$ and $\gamma$, respectively, and retain only the nonlinear terms on the right-hand side. Then the left-hand side becomes the time derivative of the total energy $E= \frac{1}{2}(u^2+\gamma^2)$, while the right-hand side vanishes. Dynamically, once an extreme event in $u$ is triggered by the stochastic damping, the feedback term $-cu^2$ reduces the amplitude of $\gamma$, after which $\gamma$ drives $u$ back toward its normal state. This mechanism prevents non-physical amplification and yields more realistic intermittent bursts. Because the model contains only one latent variable, it provides the simplest setting in which the hidden triggering mechanism can be visualized directly.

The parameter values used in \eqref{SPEKF_M_num} are as follows:
\begin{equation}
F_u = 1,\quad
F_\gamma = 1,\quad
d_\gamma = 0.8,\quad
d_u = 0.8,\quad
c = 1.2,\quad
\sigma_\gamma = 2,\quad
\sigma_u = 0.5,
\end{equation}
with numerical integration time step $\Delta t=0.005$.

Panel (a) of Figure \ref{Fig:SPEKF_M_Paths} displays the observed trajectory of $u$, with extreme events highlighted in red. For this case study, an extreme event is defined as an excursion whose peak amplitude exceeds the 90th percentile of all event amplitudes. Because the deterministic forcing satisfies $F_u>0$, the most prominent excursions occur on the positive side. These events are associated with intermittent episodes of positive effective damping (anti-damping), generated when the stochastic damping variable $\gamma$ rises above the threshold $d_u/c$ (Panel (b)), so that the net damping changes sign and transient growth becomes possible. Panel (c) shows the representative observed trajectory of $u$ aligned at the event peak exploiting the method developed in Section \ref{Subsec:most_probable}. Panels (d) and (e) present the corresponding most probable hidden trajectory inferred from the smoother and the filter, respectively. The largest discrepancy appears during the growth stage prior to the peak ($t=0$): the smoother identifies an earlier rise of $\gamma$ with smaller uncertainty, whereas the filter responds later because it only uses information available up to the current time. Panel (f) quantifies these differences through the relative entropy. The red and cyan curves measure the departure of the filter-based and smoother-based conditional distributions from the full marginal distribution of $\gamma$, while the dashed blue curve compares the smoother and filter directly. The pronounced pre-peak separation shows that retrospective inference captures precursor signatures that are only partially accessible in real time. In contrast, the gap becomes much smaller after the peak, indicating that once the event is fully developed, both estimators recover nearly the same hidden-state information. At the event peak, there is a brief increase in the discrepancy between the smoother and filter, since the smoother incorporates future information associated with the subsequent decay phase whereas the filter does not. 

\begin{figure}[!ht]
\centering
\hspace*{-0cm}\includegraphics[width=1\linewidth]{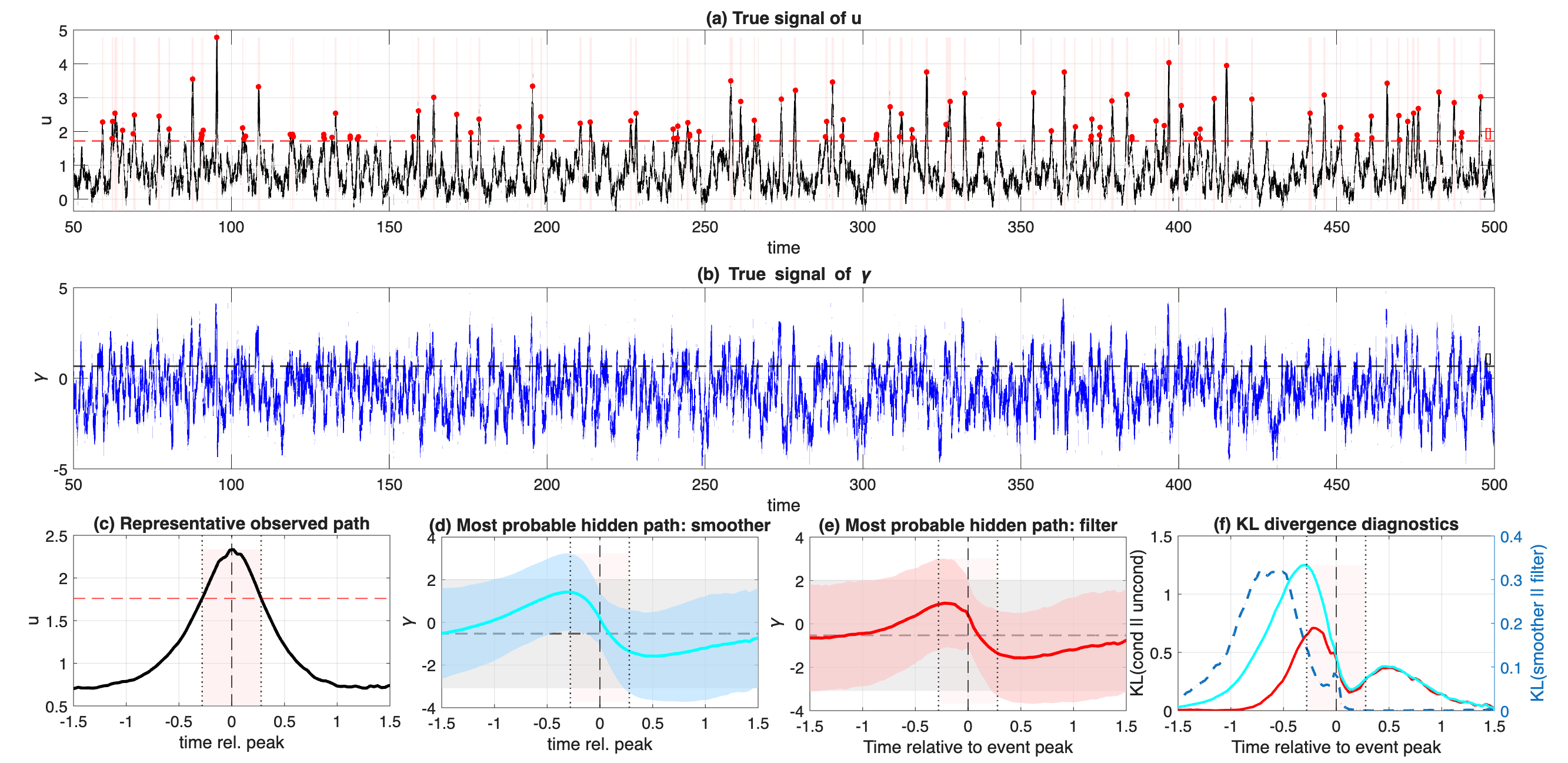}
\caption{Simulation and hidden-state mechanisms of extreme events in system \eqref{SPEKF_M_num}.
Panel (a): True signal of $u$, where red markers indicate events whose peak amplitudes exceed the 90th percentile and the red shading highlights the duration the extreme threshold is exceeded.
Panel (b): True signal of the stochastic damping $\gamma$. The dashed horizontal line marks the threshold $d_u/c$; crossing this level causes the net damping in the $u$-equation to become positive (anti-damping), enabling transient amplification.
Panel (c): Representative trajectory of $u$ conditioned on extreme events, aligned by the event peak at relative time $0$.
Panel (d): Most probable hidden trajectory conditioned on extreme events inferred by smoothing aligned to the event peak. The colored band denotes $\pm 2$ standard deviations around the posterior mean, while the gray shading represents the unconditional marginal variability of $\gamma$.
Panel (e): Same as in Panel (d), but inferred by filtering.
Panel (f): Relative entropy diagnostics. The red (cyan) curve shows the divergence between the filter-based (smoother-based) conditional distribution and the full marginal distribution of $\gamma$ (left axis). The dashed blue curve shows the divergence between the smoother and filter conditional distributions (right axis).
}\label{Fig:SPEKF_M_Paths}
\end{figure}

Figure~\ref{Fig:SPEKF_M_Events} presents three event-level case studies of the system \eqref{SPEKF_M_num}: two strong extremes and one moderate extreme. These examples reveal how individual events emerge from hidden-state dynamics that are only partially visible in the observed signal. Consistent with the statistical results, the smoother responds at the onset stage of each event (identified via \eqref{t_on}), substantially earlier than the filter. At that time, the observed trajectory $u$ often shows only weak or ambiguous changes, while the hidden variable $\gamma$ has already crossed the threshold $d_u/c$, initiating a period of positive effective damping (anti-damping) and cumulative growth. This demonstrates that conditioning on the realized event allows retrospective inference to recover precursor signatures that are difficult to identify in real time.
The last row further quantifies how hidden-state forcing at different times contributes to the eventual extreme event through the integrated influence ranges. Positive excursions of $\gamma$ above the threshold (anti-damping) can maintain a persistent influence from the onset through the event peak, whereas its contribution decays more rapidly after the peak. The two strong events are associated with intense anti-damping bursts that produce rapid amplification of $u$. In contrast, the moderate event is generated by a weaker but more sustained anti-damping episode, leading to a slower build-up and a longer interval between onset and peak. These case studies highlight that extreme-event magnitude depends not only on the instantaneous strength of the hidden trigger, but also on its duration and temporal persistence.

\begin{figure}[!ht]
\centering
\hspace*{-0cm}\includegraphics[width=1\linewidth]{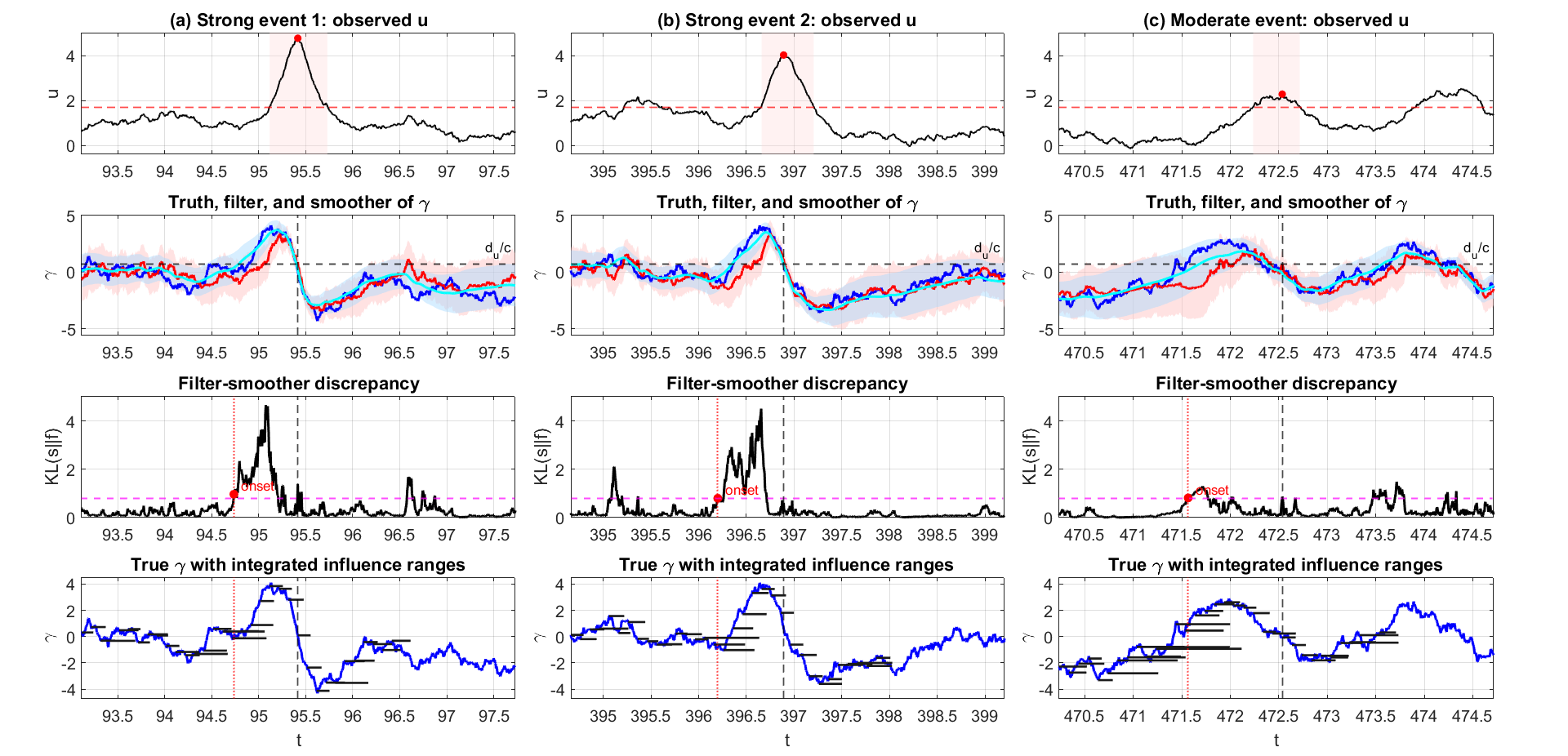}
\caption{Mechanisms of representative individual extreme events in system \eqref{SPEKF_M_num}.
The three columns correspond to two strong events and one moderate event.
First row: true signal of $u$ near each event. The event peak is marked by the red dot, and the red dashed horizontal line denotes the 90th-percentile threshold used to define extremes.
Second row: true hidden trajectory $\gamma$ (blue), together with the filter estimate (red) and smoother estimate (cyan). Shaded bands indicate $\pm2$ standard deviations around the corresponding posterior means. The horizontal dashed line marks the threshold $d_u/c$, above which the net damping in the $u$-equation becomes positive (anti-damping) and transient growth is induced. The vertical dashed line indicates the event peak time.
Third row: Relative entropy between the smoother and filter conditional distributions as a function of time. The horizontal magenta dashed line is the detection threshold $\kappa = 0.8$. The onset time is defined as the first threshold crossing within the $T_{pre} = 1.5$ time units preceding the peak.
Fourth row: true trajectory of $\gamma$ together with the integrated influence ranges (horizontal black segments), which quantify the time intervals over which hidden-state anomalies contribute to the event amplitude at the peak.
}\label{Fig:SPEKF_M_Events}
\end{figure}

To summarize, this example illustrates how hidden-state dynamics govern intermittent extremes and how these mechanisms can be diagnosed through filtering, smoothing, and information-theoretic tools. Extreme events in the observed variable $u$ are systematically linked to positive excursions of the hidden damping variable $\gamma$ beyond the threshold $d_u/c$, where anti-damping mechanisms trigger transient growth. A key distinction emerges between online and retrospective inference: smoothing detects the precursor phase earlier than filtering, showing that the onset of an extreme event is often invisible in the observed time series even though the hidden mechanisms have already developed. From a prediction perspective, relying only on the observed signal may therefore be insufficient, and more comprehensive predictive models that account for latent dynamics are needed. The event-wise case studies further show that extreme-event magnitude depends on both the strength and persistence of anti-damping episodes. Strong bursts produce rapid and pronounced extremes, whereas weaker but longer-lasting episodes generate moderate events through gradual accumulation. The estimated influence ranges confirm that precursor anomalies can affect the eventual peak over an extended time window, with sustained anti-damping contributing cumulatively up to the event peak.

\subsection{Multiple pathways to extreme events: damping, forcing, and their interactions}

We next consider an extended model with two hidden variables \cite{chen2023stochastic}:
\begin{equation}\label{SPEKF_MA_num}
\begin{split}
\frac{\d u}{\d t} &= -d_u u +c\gamma u + F_u + b + \sigma_u \dot{W}_u,\\
\frac{\d \gamma}{\d t} &= -d_\gamma \gamma - cu^2 + F_\gamma + \sigma_\gamma \dot{W}_\gamma,\\
\frac{\d b}{\d t} &= -d_b b + \sigma_b \dot{W}_b.
\end{split}
\end{equation}
The parameters used in this model are as follows:
\begin{equation}
\begin{gathered}
d_u=0.8,\qquad d_\gamma=0.8,\qquad d_b=1,\qquad c=1.2,\\
F_u=1,\qquad f_\gamma=1,\qquad
\sigma_u=0.5,\qquad \sigma_\gamma=2,\qquad \sigma_b=2.5,
\end{gathered}
\end{equation}
with numerical integration time step $\Delta t=0.005$. A long trajectory is generated, and extreme events are identified from the upper and lower $5\%$ tails of the post-transient distribution of $u$ using a minimum peak-separation constraint.

In addition to the stochastic damping $\gamma$, the model \eqref{SPEKF_MA_num} now includes a stochastic forcing variable $b$. The observable mode $u$ can therefore experience extreme growth through several distinct mechanisms: weakened damping, strong positive forcing, or a cooperative interaction between both effects. In addition, this model admits not only positive extremes but also negative extreme episodes, as $F_u+b$ can vary in sign over time with $\sigma_b = 2.5$ unlike $F_u = 1 > 0 $ in \eqref{SPEKF_M_num}. This setting is designed to study heterogeneity among extreme events. In many realistic systems, observed extremes do not arise from a single universal pathway, and events with similar amplitudes may be generated by substantially different hidden processes.

To characterize these pathways, each detected event is represented by a feature vector containing peak amplitude, duration, pre-event growth rate, hidden-state averages, and integrated damping/forcing contributions over the precursor window; see the Appendix for a complete list. After standardization, $k$-means clustering is applied to the feature space, and three robust clusters are identified. These clusters reveal distinct families of extreme events rather than one averaged mechanism.

\begin{figure}[!ht]
\centering
\hspace*{-0cm}\includegraphics[width=1\linewidth]{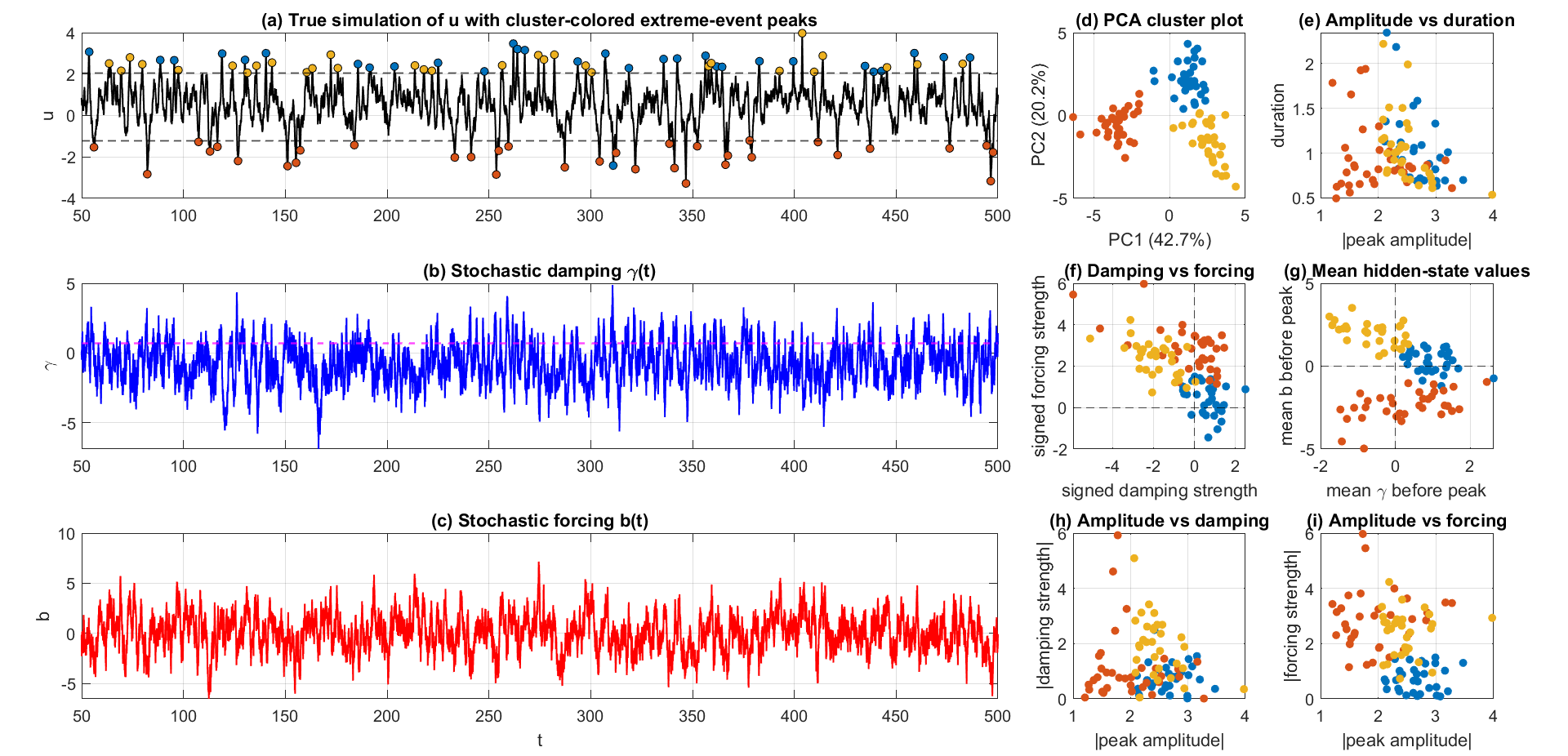}
\caption{
Simulation of system~\eqref{SPEKF_MA_num} and cluster diagnostics for the detected extreme events.
Panel (a): True trajectory of $u$, where colored markers indicate event peaks assigned to different clusters; dashed horizontal lines denote the positive and negative extreme-event thresholds.
Panel (b): Hidden stochastic damping $\gamma(t)$. The dashed horizontal line marks the anti-damping threshold above which the net linear damping in the $u$-equation becomes positive.
Panel (c): Hidden stochastic forcing $b(t)$.
Panel (d): Projection of the adopted event feature vectors onto the first two principal components, showing clear cluster separation (see Appendix).
Panel (e): Peak amplitude versus event duration.
Panel (f): Signed integrated damping contribution versus signed integrated forcing contribution during the precursor stage.
Panel (g): Mean pre-peak values of $\gamma$ and $b$.
Panel (h): Peak amplitude versus damping strength.
Panel (i): Peak amplitude versus forcing strength.
}
\label{Fig:SPEKF_MA_Simulation}
\end{figure}

Figure \ref{Fig:SPEKF_MA_Simulation} summarizes the full simulation and the geometry of the clustered events. Panel~(a) shows that positive and negative extremes coexist in the same observed time series, while the cluster labels indicate that similar peak amplitudes can arise from different hidden pathways. Panels (b)--(c) display the two latent drivers, $\gamma$ and $b$, whose intermittent excursions generate bursts of growth or suppression in $u$. The PCA projection in Panel (d) confirms that the extracted event features naturally organize into separated groups. The remaining scatter plots provide physical interpretation: some events are associated with stronger damping-related contributions, some with forcing-dominated contributions, and others with mixed signatures involving both variables.

\begin{figure}[!ht]
\centering
\hspace*{-0cm}\includegraphics[width=1\linewidth]{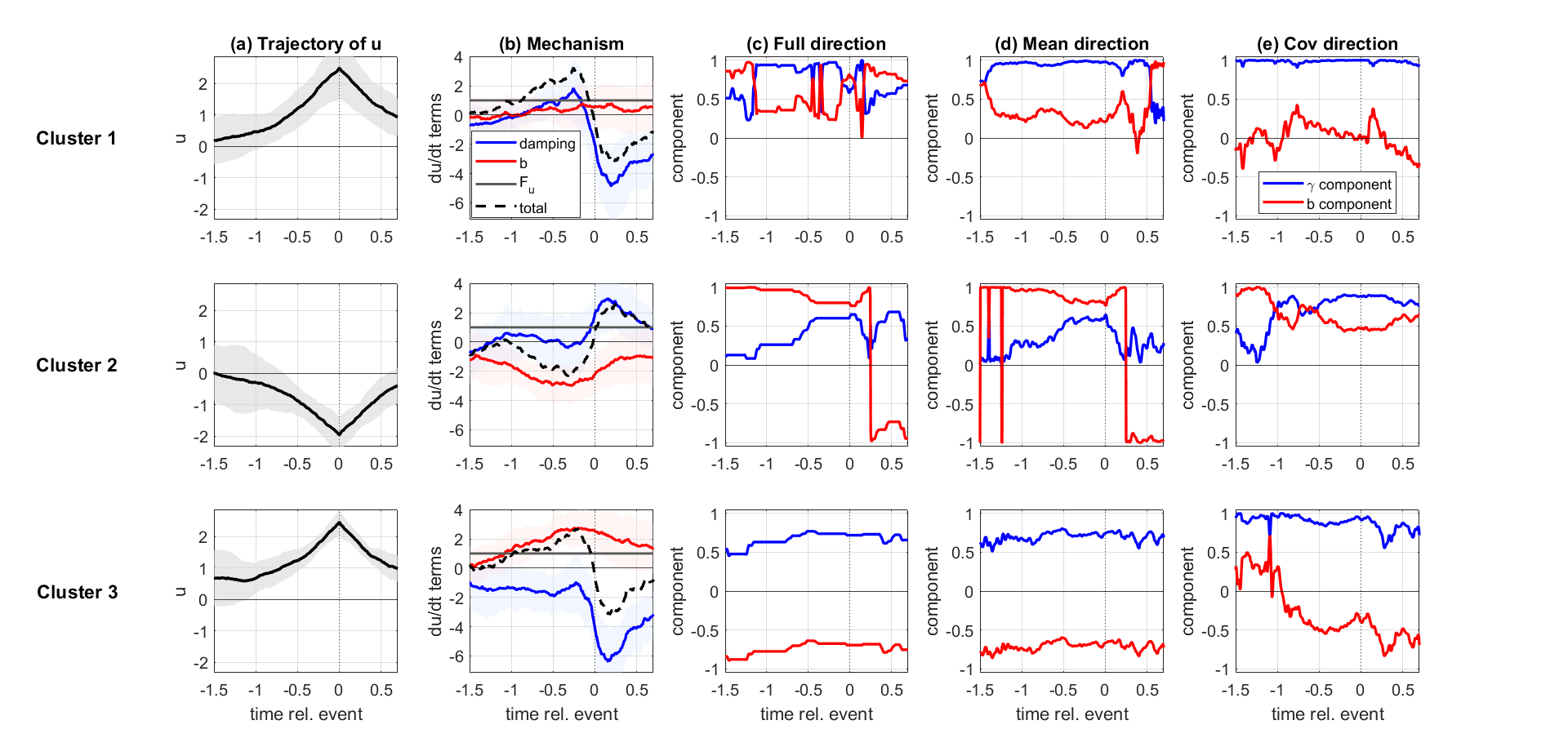}
\caption{
Cluster-wise representative pathways for the extreme events in system~\eqref{SPEKF_MA_num}. Each row corresponds to one cluster.
First column: aligned mean trajectory of $u$ relative to the event peak ($\tau=0$), with shading indicating one standard deviation across events in the cluster.
Second column: mean decomposition of the $u$ tendency found by averaging the value of each term across extreme events with the event peak aligned to relative time 0: the damping-related term $(-d_u+c\gamma)u$ (blue), stochastic forcing $b$ (red), constant forcing $F_u$ (gray), and their total contribution (black dashed). Shaded regions indicate one standard deviation for the damping and forcing terms.
Third column: most sensitive hidden direction based on the full information criterion.
Fourth column: sensitive direction induced by mean differences only.
Fifth column: sensitive direction induced by covariance differences only. In the last three columns, blue and red curves denote the $\gamma$- and $b$-components of the unit direction vectors, respectively. In all panels the vertical dashed line marks the event peak time at relative time $\tau = 0$.
}
\label{Fig:SPEKF_MA_Cluster}
\end{figure}

Figure \ref{Fig:SPEKF_MA_Cluster} provides a dynamical interpretation of the three clusters. The first column compares the typical observed trajectories. From Panel (a) of Figure \ref{Fig:SPEKF_MA_Simulation}, Clusters 1 and 3 correspond to positive extreme events, whereas Cluster 2 corresponds to negative extreme events. Cluster~1 exhibits sharper and shorter bursts, with a more rapid build-up toward the peak. In contrast, Clusters 2 and 3 show broader events with slower growth and recovery. The second column decomposes the growth mechanisms and clarifies the distinct triggering pathways. Cluster 1 is primarily damping driven: anti-damping supplies the dominant growth mechanism and leads to faster amplification than in Cluster 3, which is mainly forcing driven through bursts in $b$. Cluster 2 is also largely forcing driven, while damping further enhances the growth once the event is initiated. In particular, the negative forcing is crucial for generating this class of negative extremes. Without sufficiently strong forcing of opposite sign, the events would remain predominantly positive, as in the previous subsection where only stochastic damping was present. Thus, events with comparable amplitudes need not share the same internal cause.

The last three columns quantify the most informative hidden directions that distinguish each cluster from the background behavior. For the full sensitive direction, Clusters 1 and 2 show cooperative contributions from damping and forcing, so the $\gamma$- and $b$-components tend to point in the same effective direction toward the corresponding extremes. In contrast, Cluster 3 is primarily forcing driven: the forcing term enhances the positive events, while damping acts in the opposite direction and partially suppresses their growth. For Cluster 1, the contribution of damping is relatively weak during the early precursor stage. However, once the total damping crosses the instability threshold, as seen in the mechanism decomposition, damping rapidly becomes the leading driver and dominates the final amplification toward the peak.  Consistent with panel (b) for Cluster 2, while damping does plays a role in exciting extreme behavior, forcing is the most influential throughout, where the episodes are primarily build-up by forcing prior to the peak, and then strongly suppressed after. For Cluster 3, throughout the lead-up, peak and decay of the extreme episode the roles of forcing and damping remain the same. The full sensitive direction is broadly consistent with the direction obtained from mean differences alone, indicating that shifts in the hidden-state averages provide the main separation between clusters. The covariance-based direction, however, places stronger weight on the damping component across all clusters, since anti-damping episodes can trigger rapid growth and substantially enlarge event uncertainty. This reflects the covariance contribution in the relative entropy. Overall, the relevant precursor variable is mechanism dependent and may evolve over the event life cycle, while a full contribution profile of the distribution, both in mean and uncertainty, provides a comprehensive understanding of the underlying mechanisms. Hence, predictive models based only on the observed signal $u$ or on a single hidden indicator may miss important classes of extremes.

To summarize, this example demonstrates that introducing multiple hidden variables leads to genuinely distinct pathways for extreme-event formation. Events with similar observed amplitudes may arise from anti-damping, stochastic forcing, or cooperative interactions between both mechanisms, and these pathways produce different build-up, duration, and recovery characteristics. The clustering analysis shows that such heterogeneity can be identified systematically from trajectory-based and statistical features rather than inferred from the observed signal alone. The sensitive-direction diagnostics further reveal that the dominant precursor variable depends on the event class and can evolve over time. From a prediction perspective, this indicates that reliable forecasts require models that account for latent mechanism switching rather than relying only on the observable mode or a single averaged indicator.

\subsection{Regime transitions and extreme unblocking in topographic flow models}\label{Sec:Topo}

Large-scale atmospheric blocking and unblocking events are prototypical examples of regime transitions in geophysical fluid dynamics \cite{brunner2017connecting, vallis2017atmospheric}. Blocking corresponds to persistent, quasi-stationary large-scale flow anomalies that deflect the mean westerlies and can generate prolonged surface extremes such as heatwaves, cold-air outbreaks, droughts, and heavy precipitation. Rapid exits from blocked states (unblocking) are equally important, since they determine the termination and duration of these impacts. A central scientific question is how nonlinear interactions among the large-scale flow, smaller eddies, stochastic forcing, and spatial topography trigger such transitions.

We consider a barotropic quasi-geostrophic flow model over topography \cite{qi2018predicting, majda2006nonlinear}. Let $\psi(x,y,t)$ denote the streamfunction and define the potential vorticity
\begin{equation}
q=\Delta\psi+h(x,y),
\end{equation}
where $h(x,y)$ is the bottom topography. The governing stochastic equation is
\begin{equation}\label{eq:topo_pde_full}
\partial_t q + J(\psi,q)+\beta \partial_x \psi
=
-\nu \Delta \psi+\sigma_\psi \dot W_\psi,
\end{equation}
where $J(a,b)=a_x b_y-a_y b_x$ is the Jacobian operator, $\beta$ is the planetary vorticity gradient (beta-plane approximation), $\nu$ is a damping coefficient, and $\dot W_\psi$ is stochastic forcing. The damping and stochastic terms parameterize unresolved smaller-scale processes, neglected physics, and model uncertainty. Topography breaks translational symmetry, introduces preferred flow states and anisotropy, and provides a key source of multiple regimes and low-frequency transitions.

A layered truncation can be constructed by separating the zonal mean flow from a collection of smaller Fourier modes, which eliminates the Jacobian term. With an appropriate partition into observed and hidden variables, the resulting model belongs to the conditional Gaussian modeling framework introduced earlier \cite{chen2018conditional, chen2024physics}. Therefore, the filtering, smoothing, and conditional Gaussian mixture tools developed in Sections \ref{Sec:Framework}--\ref{Sec:Mechanisms} apply directly. Since our emphasis here is on a broader setting for extreme events studies, we only briefly acknowledge that structured case and focus below on the general model.

To this end, we Fourier expand the streamfunction over the full spatial domain as
\begin{equation}
\psi(x,y,t)
=
- V(t)y
+
\sum_{\mathbf{k}\in\mathcal K}
\phi_{\mathbf{k}}(t)e^{i(k_xx+k_yy)},
\qquad
\mathbf{k}=(k_x,k_y),
\end{equation}
where $V(t)$ is the large-scale zonal mean flow and $\phi_{\mathbf{k}}(t)$ are complex modal amplitudes satisfying the reality condition
$
\phi_{-\mathbf{k}}=\phi_{\mathbf{k}}^\ast.
$
The first term represents the dominant jet component, while the Fourier modes describe small-scale eddies and wave disturbances interacting with the mean flow.

Substituting the expansion into \eqref{eq:topo_pde_full} yields a finite-dimensional stochastic system after a truncation to a finite set of wavenumbers $\mathcal K$,
\begin{subequations}\label{eq:topo_fourier_full}
\begin{align}
\d \phi_{\mathbf{k}}
&=
\Bigg[
\sum_{\mathbf{m}\in\mathcal K}
\frac{C(\mathbf{k},\mathbf{m})}{|\mathbf{k}|^2}\,
\phi_{\mathbf{k}-\mathbf{m}}
\big(-|\mathbf{m}|^2\phi_{\mathbf{m}}+h_{\mathbf{m}}\big)
+i k_x\Big(\frac{\beta}{|\mathbf{k}|^2}-V\Big)\phi_{\mathbf{k}}
+i k_x \frac{h_{\mathbf{k}}}{|\mathbf{k}|^2}V
-\nu \phi_{\mathbf{k}}
\Bigg]\d t\notag\\&\qquad\qquad
+\sigma_\psi \d W_{\mathbf{k}},
\\
\d V
&=
\Bigg[
\sum_{\mathbf{k}\in\mathcal K}
k_x\,\mathrm{Im}\!\big(h_{\mathbf{k}}\phi_{\mathbf{k}}^\ast\big)
-\nu_VV
\Bigg]\d t
+\sigma_V \d W_V.
\end{align}
\end{subequations}
Here the quadratic nonlinear interaction coefficient is
\begin{equation}
C(\mathbf{k},\mathbf{m})
=
-\big(\mathbf{k}^\perp-\mathbf{m}^\perp\big)\cdot \mathbf{m},
\qquad
\mathbf{k}^\perp=(-k_y,k_x),
\end{equation}
and the convention $\phi_{\mathbf{k}-\mathbf{m}}=0$ is used whenever $\mathbf{k}-\mathbf{m}\notin\mathcal K$. Thus, the first summation in the $\phi_{\mathbf{k}}$ equation represents triad interactions among retained Fourier modes generated by nonlinear advection. Each mode evolves through energy exchange with pairs of interacting modes, together with Rossby-wave propagation, advection by the zonal mean flow, topographic forcing, damping, and stochastic excitation.

The drift term in the $V$ equation is the feedback from the wave field and topography onto the zonal mean flow. It is obtained from the Reynolds-stress form of the truncated dynamics and measures the aggregate momentum transfer from the eddies to the large-scale jet. The parameters $\nu$ and $\nu_V$ are damping coefficients for the eddies and mean flow, while $\sigma_\psi$ and $\sigma_V$ denote stochastic forcing amplitudes representing unresolved variability and model uncertainty.

Unlike the layered case, system \eqref{eq:topo_fourier_full} is not conditionally Gaussian because the hidden modal variables interact fully nonlinearly. Therefore, the closed-form filtering and smoothing equations are unavailable. If trajectory-wise state reconstruction is desired, one may instead use ensemble-based data assimilation methods such as the ensemble Kalman filter or ensemble smoothers. In this subsection, however, we focus on the statistical viewpoint and use direct Monte Carlo simulation.

The numerical experiments use the nondimensional parameters
\begin{equation}
\gamma=0.18,\qquad
\beta=5\sqrt{\gamma},\qquad
\nu=0.02,\qquad
\nu_V=0.01,\qquad
\sigma_\psi=0.03,\qquad
\sigma_V=0.015.
\end{equation}
These values place the system in a regime with weak damping, sustained stochastic excitation, and intermittent transitions between multiple metastable states. The resolved mode set is
\begin{equation}
\mathcal K=\mathcal K' \cup (-\mathcal K'), \quad
\mathcal K' = \{(0,2),(1,2),(0,1),(1,1),(2,1),(1,0),(2,0),(1,-1),(2,-1),(1,-2)\}.
\end{equation}
The retained Fourier modes represent a small set of large- and intermediate-scale wave patterns that interact to produce regime transitions. The zonal modes $(1,0)$ and $(2,0)$ describe streamwise ridges and troughs that directly modulate the mean jet structure. The meridional modes $(0,1)$ and $(0,2)$ represent north-south meanders and latitudinal shifts of the flow. The diagonal and near-diagonal modes $(1,\pm1)$ and $(2,\pm1)$ correspond to tilted wave patterns that transport momentum and vorticity across the domain and are particularly effective in triggering transitions between blocked and unblocked states. Higher-wavenumber members within each family provide sharper spatial gradients and more localized deformation. Although highly truncated, this collection captures the minimal set of interacting wave geometries needed to generate multiple metastable regimes, intermittent transitions, and nontrivial precursor structures that can lead to various distinct classes or clusters of latent-based mechanisms for generating extreme events.
The topography is prescribed through its Fourier coefficients $h_{\mathbf{k}}$. On the positive wavevectors we choose
\begin{equation}
h_{(1,1)}=0, \quad
h_{(2,1)}=-0.75\gamma, \quad
h_{(1,0)}=0.15\gamma, \quad
h_{(2,0)}=1.55\gamma, \quad
h_{(1,-1)}=0.90\gamma, \quad
h_{(2,-1)}=1.65\gamma,
\end{equation}
with all remaining coefficients zero and with symmetry $h_{-\mathbf{k}}=h_{\mathbf{k}}$.
Thus,
\begin{equation}
h(x,y)=\sum_{\mathbf{k}\in\mathcal K} h_{\mathbf{k}} e^{i(k_xx+k_yy)}.
\end{equation}
This creates an anisotropic terrain pattern combining zonal ridges, tilted ridge-valley structures, and competing oblique components. Because the strongest amplitudes occur in the zonal and lower-diagonal families (e.g., $(1,-1)$, $(2,-1)$), the flow experiences several preferred standing-wave responses rather than a single stationary pattern. This promotes blocking, unblocking, and irregular regime switching.

We generate a long Monte Carlo simulation of \eqref{eq:topo_fourier_full} and treat the large-scale quantity $V(t)$ as the observed variable, while the small-scale Fourier modes $\phi_{\mathbf{k}}$ play the role of hidden variables. Extreme events are identified using large values of $V$, corresponding to strong episodes with zonal jets. We then compare hidden-variable statistics conditioned on these events with the full equilibrium statistics. To classify the topographic-flow extremes, each detected event is represented by a feature vector that combines observable trajectory characteristics with hidden-wave diagnostics. The features include the event peak amplitude, pre- and post-event growth rates, local variability of the large-scale flow index $V$, total hidden modal energy, energy partition among zonal, meridional, and diagonal mode families, several anisotropy ratios, and selected precursor or peak-time Fourier-mode amplitudes. This construction allows events with similar observed magnitudes but different hidden precursor organizations to be distinguished systematically through unsupervised clustering. Full definitions of all feature variables are provided in the Appendix.

\begin{figure}[!ht]
\centering
\hspace*{-0cm}\includegraphics[width=1\linewidth]{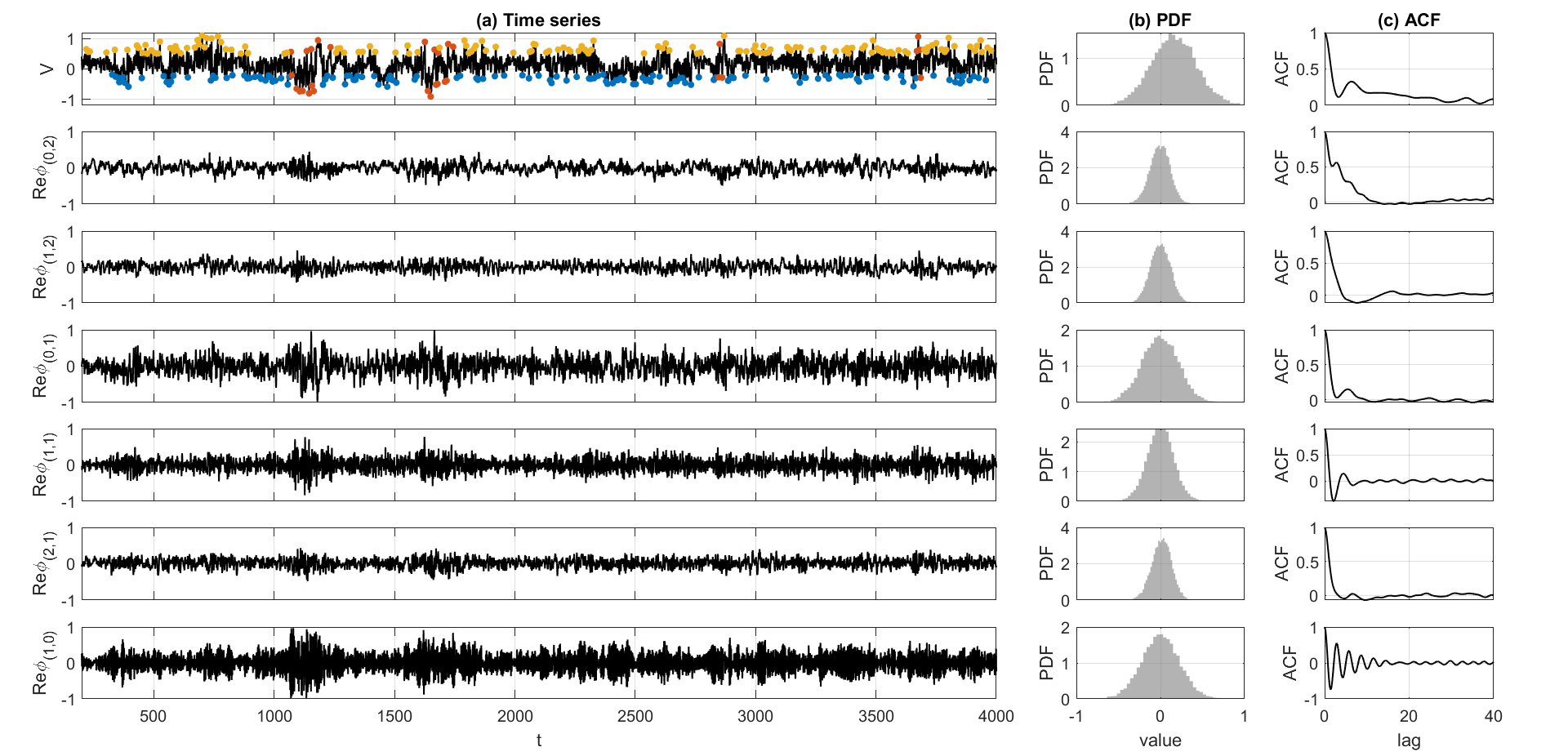}
\caption{
Basic statistical properties of the topographic flow model~\eqref{eq:topo_fourier_full}. Each row corresponds to the observed large-scale variable $V$ or to one representative hidden Fourier mode $\mathrm{Re}(\phi_{\mathbf{k}})$.
(a) Time series from the post-transient regime. Colored markers on the $V$ panel indicate detected extreme events and their respective cluster labels.
(b) Empirical probability density functions. The distribution of $V$ is broad and weakly non-Gaussian, reflecting intermittent transitions between preferred flow states, while the modal amplitudes $\mathrm{Re}(\phi_{\mathbf{k}})$ remain centered near zero with different variances reflecting their relative excitation levels.
(c) Autocorrelation functions. The variable $V$ exhibits pronounced low-frequency memory compared with the faster modal variables, whose correlations decay more rapidly and may display weak oscillatory signatures associated with wave dynamics.
}
\label{Fig:Topo_Time_Series_PDF_ACF}
\end{figure}

Figure~\ref{Fig:Topo_Time_Series_PDF_ACF} summarizes the baseline behavior of the truncated topographic model. The first row shows that the zonal-mean flow $V$ undergoes intermittent excursions between blocked and unblocked regimes, motivating its use as the observable quantity for defining extreme events. In contrast, the hidden Fourier modes fluctuate around zero with smaller amplitudes but with strongly time-dependent variance, indicating bursts of wave activity during regime transitions. The PDF column highlights the separation between the slow large-scale variable and the faster eddies: $V$ carries the broad regime structure, whereas some of the individual modes remain more localized around the equilibrium. The ACF column further emphasizes this scale separation. The slow decay of the $V$ autocorrelation indicates persistent metastable behavior, while the more rapid decorrelation of the modal amplitudes is consistent with transient eddy forcing and wave interactions.

\begin{figure}[!ht]
\centering
\hspace*{-0cm}\includegraphics[width=1\linewidth]{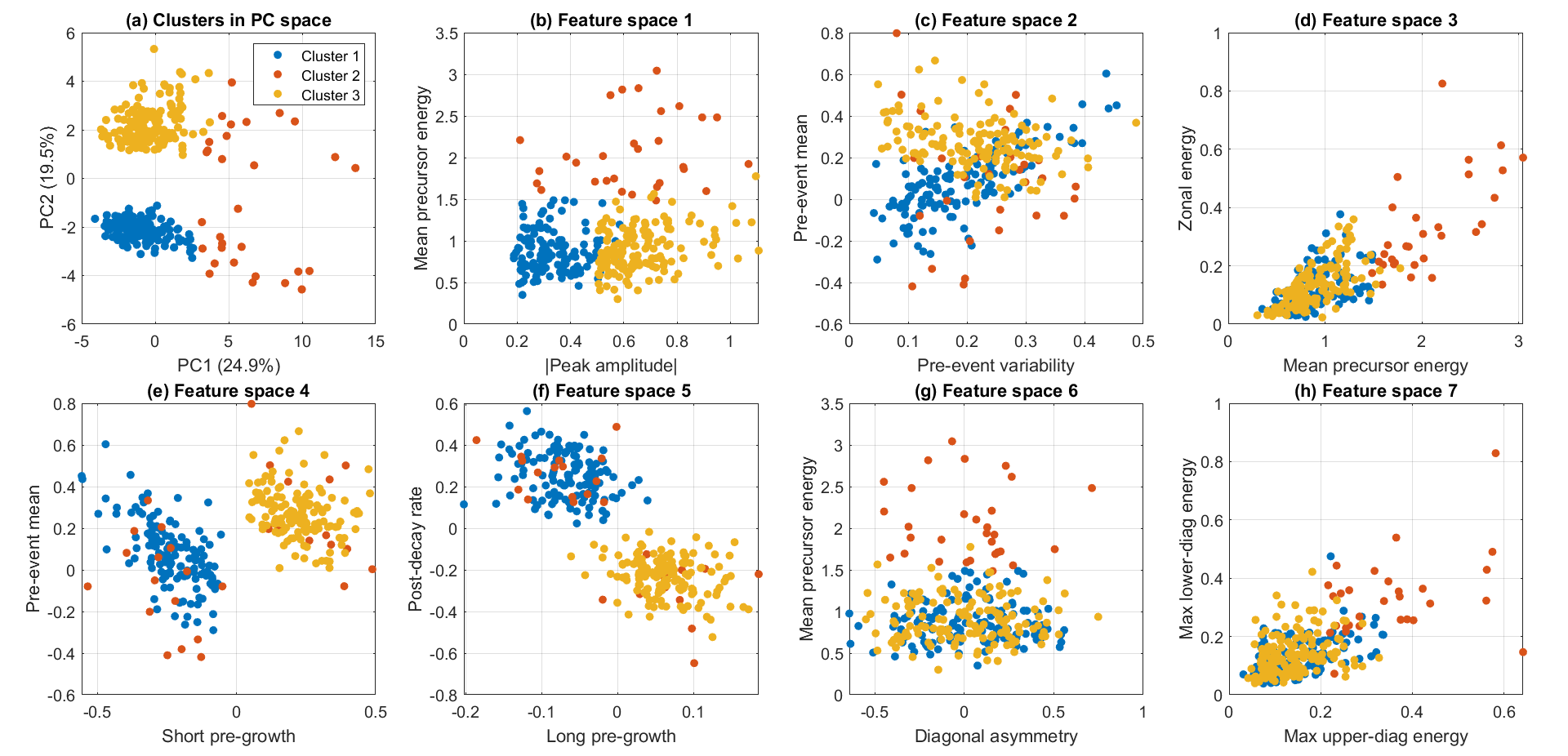}
\caption{
Mechanism-based clustering of blocking and unblocking events in the topographic flow model.
Panel (a): Extreme events projected onto the first two principal components of the feature space. Three well-separated clusters emerge, indicating distinct classes of transitions.
Panel (b)--(h): Two-dimensional projections onto a selected of the adopted physical features (see Appendix), including event amplitude, precursor variability, pre-event mean flow, growth and decay slopes, and modal-energy diagnostics. These coordinates reveal how the clusters differ in both observable signatures and hidden precursor structures. In particular, some clusters are associated with stronger hidden energy build-up, whereas others are characterized by sharper transitions or different balances among modal families.
}
\label{Fig:Topo_Clusters_2D}
\end{figure}

Figure \ref{Fig:Topo_Clusters_2D} shows that the detected extremes are not generated by a single universal transition pathway. Instead, three coherent classes emerge in the feature space. The principal-component projection in Panel~(a) already displays clear separation, confirming that the selected trajectory and hidden-state diagnostics capture genuine dynamical differences among events. The remaining panels explain the physical meaning of this separation.
Clusters 1 and 3 are primarily distinguished by opposite precursor growth signatures and therefore correspond mainly to negative and positive extreme events, respectively. This is seen most clearly in Panels (e) and (f): Cluster 1 has negative pre-growth and positive post-decay tendencies, consistent with trajectories evolving toward negative excursions and then recovering, whereas Cluster 3 shows positive pre-growth and negative post-decay tendencies associated with positive extremes. In contrast, Cluster 2 is not characterized mainly by sign, but by substantially stronger hidden-wave activity.
Indeed, Cluster 2 has the largest mean precursor energy, as shown in Panels (b), (d), and (g), indicating that these events are preceded by much stronger excitation of the hidden Fourier modes, as is reflected by Panel (a) in Figure \ref{Fig:Topo_Time_Series_PDF_ACF}. It also occupies the upper range of the zonal-energy coordinate in Panel~(d), showing enhanced energy in the zonal modes. Moreover, Panel (h) shows that Cluster 2 attains the largest simultaneous amplitudes in both the upper-diagonal modes $((1,1),(2,1))$ and lower-diagonal modes $((1,-1),(2,-1))$ on the Fourier wavenumber lattice. This indicates a broader activation across multiple wave families rather than concentration in a single mode sector.
Panels (c)--(e) further show that Cluster 2 has larger variability and wider spread in pre-event flow statistics, consistent with more diverse transition pathways. Thus, Cluster 1 and Cluster 3 represent relatively coherent sign-dependent transitions in the large-scale mean flow, while Cluster~2 corresponds to energetically rich and structurally diverse events driven by stronger multiscale wave interactions.

\begin{figure}[!ht]
\centering
\hspace*{-0cm}\includegraphics[width=1\linewidth]{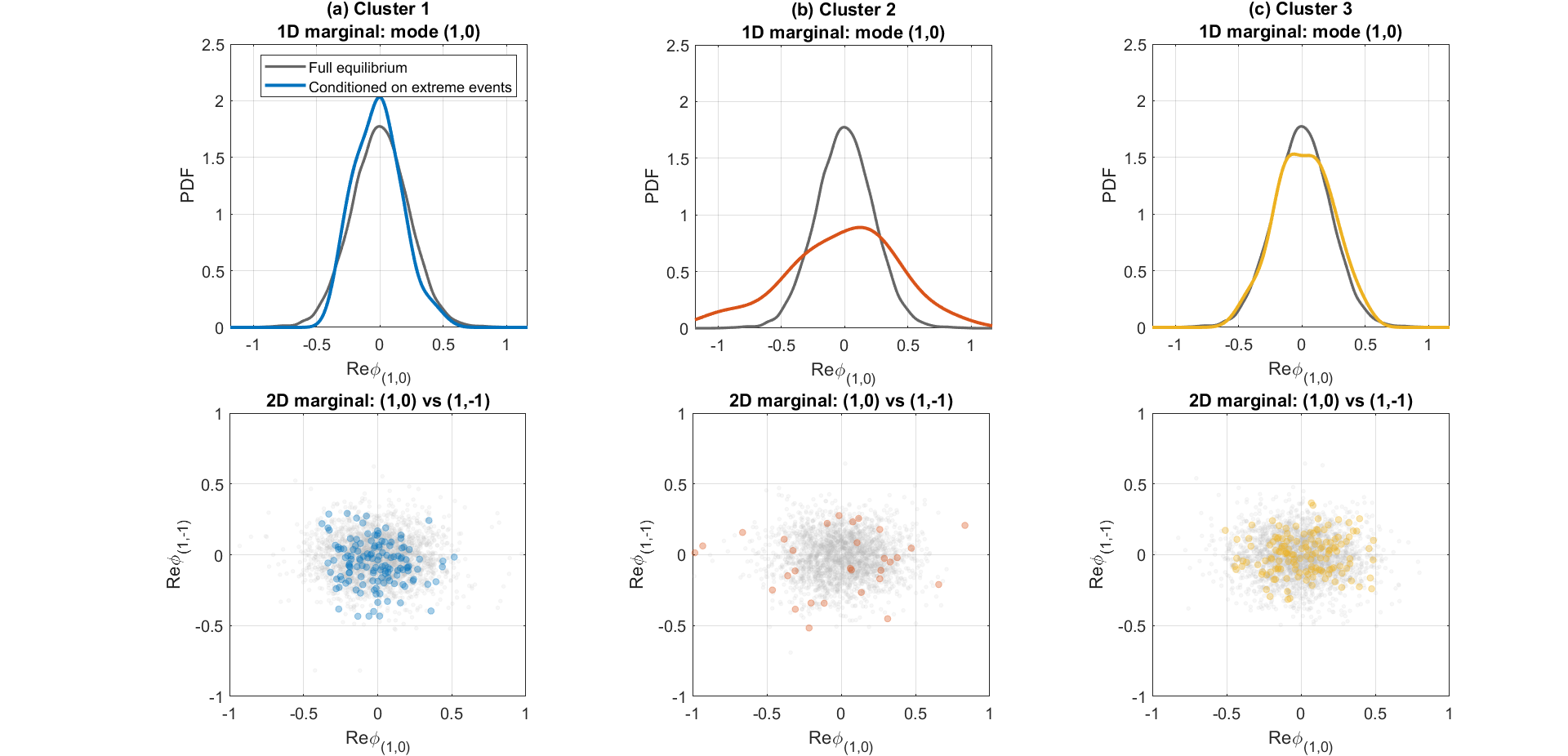}
\caption{
Hidden-state distributions conditioned on each event cluster (Panels (a), (b), (c) for Clusters 1, 2, and 3, respectively), compared with the full equilibrium statistics. For each column, the top panel shows a one-dimensional marginal density of mode $\mathrm{Re}(\phi_{(1,0)})$, where the gray curve denotes the full climatological distribution and the colored curve denotes the distribution conditioned on that cluster. The other modes have a similar behavior. The bottom panel shows the corresponding two-dimensional marginal distribution of $(\mathrm{Re}(\phi_{(1,0)}),\mathrm{Re}(\phi_{(1,-1)}))$, where gray points represent the background attractor and colored points correspond to event times within the cluster.
}
\label{Fig:Topo_Conditional_PDF}
\end{figure}

Figure \ref{Fig:Topo_Conditional_PDF} provides a more subtle picture of the hidden precursor structure than in the previous two examples. For Clusters 1 and 3, the conditional distributions are close to the full equilibrium marginals in both the one-dimensional and two-dimensional views. This contrasts sharply with the earlier low-dimensional models, where extreme events were associated with latent states concentrated in a localized region of hidden space. Here, the triggering mechanisms cannot be characterized by a simple geometric threshold or by occupying one special subset of the latent variables. Instead, the relevant information lies in coordinated combinations, relative phases, and energy partitions across multiple interacting Fourier modes. In other words, the precursor signatures are distributed over the multiscale hidden-state configuration rather than concentrated in a single dominant variable.
Cluster 2 exhibits a much more uncertain conditional distribution, with larger spread than the background equilibrium statistics. Although initially surprising, this behavior is consistent with the trajectory analysis in panel (a) of Figure \ref{Fig:Topo_Time_Series_PDF_ACF}, where events in this class are associated with intermittent bursts in the hidden modes and strongly variable wave activity. Thus, for this cluster, the precursor is not localization in state space but enhanced variability and episodic activation of the latent dynamics. These results highlight that in higher-dimensional systems, extreme-event precursors may appear through structured multivariate organization or intermittent uncertainty growth, rather than through simple mean shifts of individual hidden variables.

\begin{figure}[!ht]
\centering
\hspace*{-0cm}\includegraphics[width=1\linewidth]{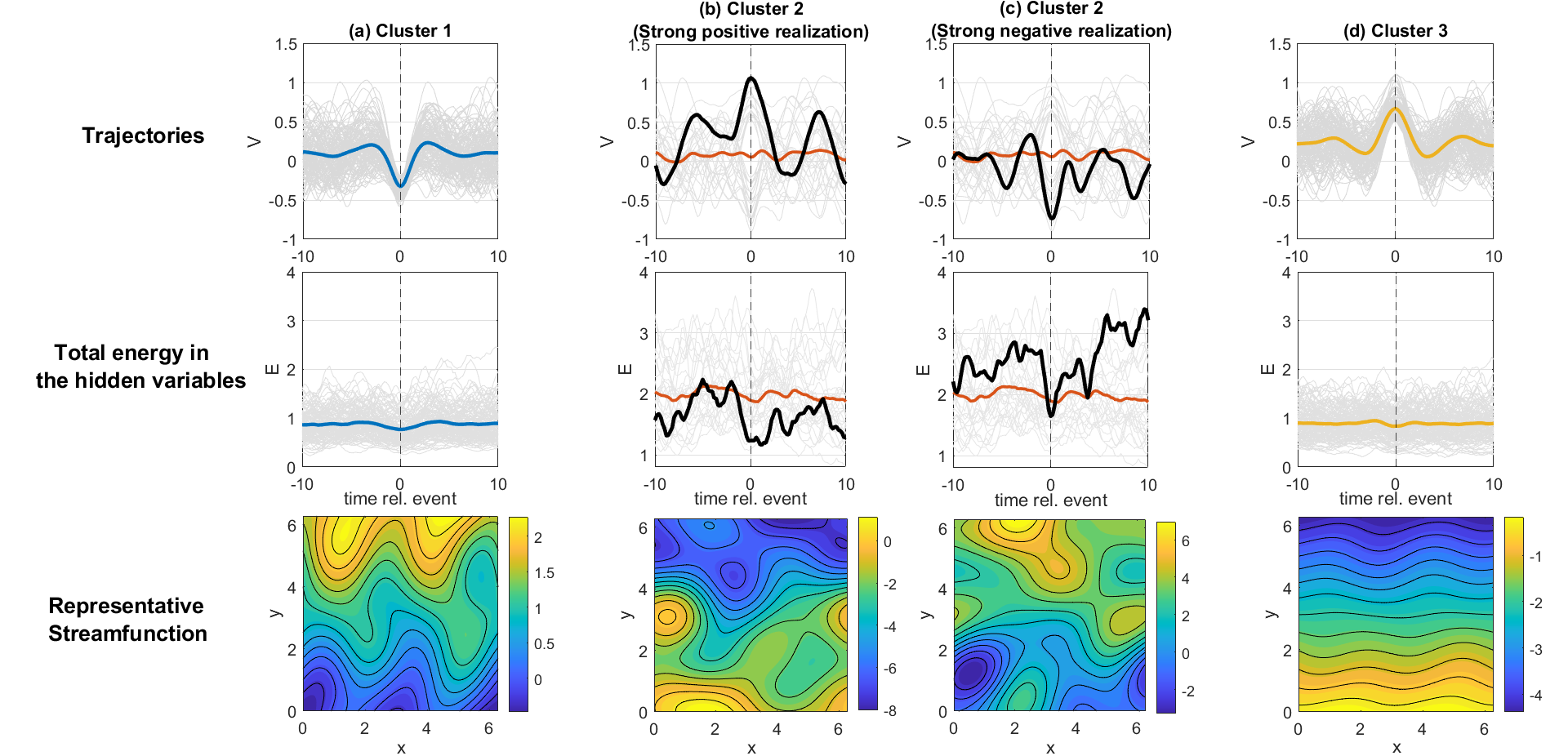}
\caption{
Representative trajectories, hidden-energy evolution, and flow patterns for the three event clusters (Panels (a), (b)-(c), (d) for Clusters 1, 2, and 3, respectively). Top row: aligned trajectories of the observed variable $V$ relative to the peak event time, where gray (and black in Cluster 2) curves denote individual events and colored curves denote cluster means. For Cluster~2, both a strong positive and a strong negative realization are shown, illustrating mixed-sign transitions within the same class. Middle row: total hidden modal energy $E=\sum_{\mathbf{k}}|\phi_{\mathbf{k}}|^2$ aligned in the same way. Bottom row: representative streamfunction fields reconstructed at event time from the cluster mean states for Clusters 1 and 3, and the streamfunctions fields corresponding to the realization in black for Cluster 2.
}
\label{Fig:Topo_Clusters_Representatives}
\end{figure}

Figure \ref{Fig:Topo_Clusters_Representatives} provides an event-wise interpretation of the clustered transitions. The top row shows that the clusters possess distinct temporal signatures in the observed flow index $V$. Cluster 1 and Cluster 3 correspond primarily to negative and positive extreme events, respectively, whereas Cluster 2 contains both positive and negative realizations. Thus, the clustering is not determined solely by the sign of the event, but by the full dynamical structure of its precursor and transition pathway. In particular, Cluster 2 exhibits broader trajectories with stronger fluctuations and greater diversity across realizations.
The middle row shows that the total hidden modal energy evolves differently across clusters. Cluster 2 has substantially larger hidden energy not only near the event time but also during the precursor and recovery phases. This indicates that Cluster 2 consists of more energetic transitions with stronger variability, consistent with the wider spread of trajectories within that class. In contrast, Clusters 1 and 3 occur under weaker hidden-wave activity and are therefore dynamically more coherent.
The bottom row links these temporal signatures to distinct spatial flow structures. Because the mean forcing in $V$ is positive, Cluster 3 events correspond to strong positive zonal flow and display a clear jet pattern, representing canonical unblocking states. Cluster 1 events correspond to negative extremes with reversed or weakened zonal transport (blocked mean westerlies, meridional flow). Cluster 2 is more subtle: even when $V$ is strongly positive, the flow can still retain blocking-like structure because the hidden modes remain energetic. In particular, enhanced meridional variability (north-south waviness of the streamlines) and meandering jets can disrupt a purely zonal east-west current and generate complex mixed states. Thus, the framework separates not only event amplitudes, but complete dynamical pathways involving precursor energy growth, transition timing, and coherent flow geometry.

\section{Conclusion and Discussion}\label{Sec:Conclusion}

\subsection{Summary of this work}

This paper developed a mathematical framework for studying the mechanisms and pathways of extreme events in partially-observed stochastic dynamical systems with hidden variables. The central objective was to move beyond analyses based solely on observed extremes and to infer the latent processes that trigger, amplify, and sustain these events. By integrating data assimilation with information-theoretic and trajectory-based diagnostics, the framework reconstructs hidden precursor dynamics, quantifies their uncertainty, and determines how their temporal influence propagates toward observed extreme episodes.

Two complementary viewpoints were introduced. From a trajectory-wise perspective, filtering and smoothing hidden variable distributions, were compared to identify the onset of hidden precursors and to quantify their temporal influence ranges. These tools provide event-specific attribution and reveal whether the mechanisms responsible for an extreme event are detectable in real time or only retrospectively after future observations become available. From a statistical perspective, event-conditioned hidden-state distributions were used to identify the most sensitive triggering directions, representative latent pathways, and coherent classes of extreme-event mechanisms.

Three prototype examples illustrated the methodology. In an intermittent stochastic model, hidden damping dynamics emerged before observed bursts, and the discrepancy between filtering and smoothing provided a natural onset diagnostic. In a model with multiple hidden variables, distinct forcing-driven, damping-induced, and mixed pathways to extreme events were identified through clustering and conditional statistics. In a nonlinear topographic-flow model, the framework revealed multiple hidden mechanisms associated with blocking and unblocking transitions, highlighting the relevance of the approach for more realistic geophysical systems.

\subsection{Discussion}

A central message of this work is that observed extreme events may arise through multiple distinct latent pathways, even when their visible signatures appear similar. In such situations, analyses based only on the observed variables can be incomplete or even misleading, because different hidden mechanisms may project onto nearly identical observable behavior. The numerical examples showed that events with similar amplitudes and temporal patterns can be generated by fundamentally different precursor dynamics. This highlights the importance of hidden variables, unresolved processes, and latent degrees of freedom in understanding extreme-event generation.

Identifying multiple pathways is also important for scientific interpretation. Distinct hidden mechanisms often correspond to different physical regimes, competing instabilities, or transitions between metastable states. Therefore, separating these pathways can improve our understanding of regime switching, tipping behavior, intermittency, and nonlinear instability growth. Such phenomena are central in Earth systems, including atmospheric blocking, oceanic variability, abrupt climate transitions, and ecosystem shifts \cite{boers2022theoretical, lucarini2024detecting}, but they also arise broadly in engineering systems involving turbulence, structural failure, combustion, power grids, and complex networks \cite{lin2014tipping}. A mechanism-based view of extremes can therefore complement purely statistical descriptions and provide deeper physical insight into why rare events occur.

Another important conclusion is the temporal characterization of hidden mechanisms through onset diagnostics and influence ranges. Beyond identifying which latent processes are associated with an extreme event, the proposed framework asks when these processes first become dynamically relevant and how long their effects persist. The onset time provides an estimate of when the hidden precursor begins, which may occur substantially earlier than any visible signal in the observed variables. The influence range then quantifies whether the precursor acts only as an initial trigger or continues to shape the growth, peak, and subsequent recovery phases of the event. Equally important, the comparison between filtering and smoothing reveals that real-time detection can fail when crucial precursor information remains concealed in unresolved variables. In such cases, online estimates based only on past observations may miss the true beginning of an extreme event, whereas retrospective smoothing can recover the latent pathway more accurately. This distinction is particularly relevant for early warning systems, where understanding the limits of real-time detectability is as important as improving prediction skill itself.

The results also have practical implications for model reduction, surrogate modeling, and scientific machine learning. In many applications, reduced-order or data-driven models are constructed only from observed variables. Such models may reproduce equilibrium statistics, marginal distributions, or autocorrelation functions with reasonable accuracy, yet still fail to predict extreme events if the true triggering factors reside in unresolved hidden processes. Missing latent mechanisms can lead to systematically incorrect precursor dynamics, poor early warning skill, and biased risk estimates. The present framework therefore motivates models that explicitly incorporate stochastic hidden variables or parameterized latent processes. Distinguishing forcing-driven, damping-driven, and mixed mechanisms further provides guidance for how these unresolved effects should be represented in reduced-order closures and stochastic parametrizations. More broadly, once the essential pathways responsible for extremes are identified in a general nonlinear system, one can construct analytically tractable surrogates that retain the key interactions while simplifying secondary details. In particular, this creates a natural pathway for developing conditional Gaussian surrogate models, as well as machine-learning architectures built upon the same structure, such as conditional Gaussian Koopman networks \cite{chen2025modeling}. These approaches combine physical interpretability with major computational advantages: filtering, smoothing, prediction, and uncertainty quantification can be performed efficiently through closed-form or highly scalable algorithms. This offers a promising route for translating mechanism-level understanding in complex systems into practical forecasting and risk-assessment tools.

Finally, one important lesson comes from the third case study in Section \ref{Sec:Numerics}: extreme events are not always associated with a specific deterministic location in the hidden-state space. Instead, they may be triggered by enhanced variability, broadened uncertainty, or noise-induced excursions across dynamically sensitive regions. In these settings, the relevant signature of extremes lies not only in the geometry of attractors or coherent trajectories, but also in changes in probability distributions and fluctuation structures. This suggests that the study of extreme events should combine dynamical-systems ideas with statistical descriptions of uncertainty, variability, and rare transitions. Understanding extremes therefore requires analyzing both deterministic pathways and stochastic structures in the state space.

\subsection{Future work}

Several directions remain open. A key challenge is to extend the present framework to high-dimensional systems with strongly nonlinear hidden dynamics using scalable ensemble, particle, and machine-learning-assisted smoothing methods. Another important direction for future work is the development of efficient mixture-model representations of extreme-event-conditioned distributions for general nonlinear systems \eqref{General_System} outside the conditional Gaussian class \eqref{CGNS}, potentially using importance-sampling methods. Applying these ideas to realistic Earth system, turbulent flow, and engineering problems may uncover practically relevant precursor mechanisms and improve early warning capabilities. It is also of interest to embed the identified hidden mechanisms directly into reduced-order, stochastic, and learning-based predictive models. More broadly, connecting mechanism discovery with forecasting, uncertainty quantification, and control of extreme events remains an important direction for future research.

\section*{Acknowledgment}
N.C. is supported by the Office of Naval Research under Award No. N00014-24-1-2244 and by the Army Research Office under Award No. W911NF-23-1-0118. C.M. is partially supported by the aforementioned ONR award and partially by the University of Wisconsin-Madison Office of the Vice Chancellor for Research, with funding from the Wisconsin Alumni Research Foundation. M.A. is partially supported by the aforementioned ARO award and partially by NSF Award No. DMS-2023239 through the Institute for Foundations of Data Science at the University of Wisconsin-Madison.
\bibliographystyle{plain}
\bibliography{references}

@article{kaveh2025spatiotemporal,
  title={Spatiotemporal forecast of extreme events in a chaotic model of slow slip events},
  author={Kaveh, Hojjat and Avouac, Jean Philippe and Stuart, Andrew M},
  journal={Geophysical Journal International},
  volume={240},
  number={2},
  pages={870--885},
  year={2025},
  publisher={Oxford University Press}
}

@article{farazmand2019extreme,
  title={Extreme events: {M}echanisms and prediction},
  author={Farazmand, Mohammad and Sapsis, Themistoklis P},
  journal={Applied Mechanics Reviews},
  volume={71},
  number={5},
  pages={050801},
  year={2019},
  publisher={American Society of Mechanical Engineers}
}

@article{guth2019machine,
  title={Machine learning predictors of extreme events occurring in complex dynamical systems},
  author={Guth, Stephen and Sapsis, Themistoklis P},
  journal={Entropy},
  volume={21},
  number={10},
  pages={925},
  year={2019},
  publisher={MDPI}
}

@article{chen2020predicting,
  title={Predicting observed and hidden extreme events in complex nonlinear dynamical systems with partial observations and short training time series},
  author={Chen, Nan and Majda, Andrew J},
  journal={Chaos: An Interdisciplinary Journal of Nonlinear Science},
  volume={30},
  number={3},
  year={2020},
  publisher={AIP Publishing}
}

@article{farazmand2017variational,
  title={A variational approach to probing extreme events in turbulent dynamical systems},
  author={Farazmand, Mohammad and Sapsis, Themistoklis P},
  journal={Science advances},
  volume={3},
  number={9},
  pages={e1701533},
  year={2017},
  publisher={American Association for the Advancement of Science}
}

@article{ghil2011extreme,
  title={Extreme events: dynamics, statistics and prediction},
  author={Ghil, M and Yiou, Pascal and Hallegatte, St{\'e}phane and Malamud, BD and Naveau, P and Soloviev, A and Friederichs, P and Keilis-Borok, V and Kondrashov, D and Kossobokov, V and others},
  journal={Nonlinear Processes in Geophysics},
  volume={18},
  number={3},
  pages={295--350},
  year={2011},
  publisher={Copernicus GmbH}
}

@article{sapsis2021statistics,
  title={Statistics of extreme events in fluid flows and waves},
  author={Sapsis, Themistoklis P},
  journal={Annual Review of Fluid Mechanics},
  volume={53},
  number={1},
  pages={85--111},
  year={2021},
  publisher={Annual Reviews}
}

@article{trenberth2015attribution,
  title={Attribution of climate extreme events},
  author={Trenberth, Kevin E and Fasullo, John T and Shepherd, Theodore G},
  journal={Nature climate change},
  volume={5},
  number={8},
  pages={725--730},
  year={2015},
  publisher={Nature Publishing Group UK London}
}

@article{mohamad2018sequential,
  title={Sequential sampling strategy for extreme event statistics in nonlinear dynamical systems},
  author={Mohamad, Mustafa A and Sapsis, Themistoklis P},
  journal={Proceedings of the National Academy of Sciences},
  volume={115},
  number={44},
  pages={11138--11143},
  year={2018},
  publisher={National Academy of Sciences}
}

@article{mackay2021sampling,
  title={Sampling properties and empirical estimates of extreme events},
  author={Mackay, Ed and Jonathan, Philip},
  journal={Ocean Engineering},
  volume={239},
  pages={109791},
  year={2021},
  publisher={Elsevier}
}

@article{webber2019practical,
  title={Practical rare event sampling for extreme mesoscale weather},
  author={Webber, Robert J and Plotkin, David A and O’Neill, Morgan E and Abbot, Dorian S and Weare, Jonathan},
  journal={Chaos: An Interdisciplinary Journal of Nonlinear Science},
  volume={29},
  number={5},
  year={2019},
  publisher={AIP Publishing}
}

@article{finkel2024bringing,
  title={Bringing statistics to storylines: {R}are event sampling for sudden, transient extreme events},
  author={Finkel, Justin and O’Gorman, Paul A},
  journal={Journal of Advances in Modeling Earth Systems},
  volume={16},
  number={6},
  pages={e2024MS004264},
  year={2024},
  publisher={Wiley Online Library}
}

@article{sun2025can,
  title={Can {AI} weather models predict out-of-distribution gray swan tropical cyclones?},
  author={Sun, Y Qiang and Hassanzadeh, Pedram and Zand, Mohsen and Chattopadhyay, Ashesh and Weare, Jonathan and Abbot, Dorian S},
  journal={Proceedings of the National Academy of Sciences},
  volume={122},
  number={21},
  pages={e2420914122},
  year={2025},
  publisher={National Academy of Sciences}
}

@article{mojgani2023extreme,
  title={Extreme event prediction with multi-agent reinforcement learning-based parametrization of atmospheric and oceanic turbulence},
  author={Mojgani, Rambod and Waelchli, Daniel and Guan, Yifei and Koumoutsakos, Petros and Hassanzadeh, Pedram},
  journal={arXiv preprint arXiv:2312.00907},
  year={2023}
}

@article{ansmann2013extreme,
  title={Extreme events in excitable systems and mechanisms of their generation},
  author={Ansmann, Gerrit and Karnatak, Rajat and Lehnertz, Klaus and Feudel, Ulrike},
  journal={Physical Review E—Statistical, Nonlinear, and Soft Matter Physics},
  volume={88},
  number={5},
  pages={052911},
  year={2013},
  publisher={APS}
}

@article{noy2024extreme,
  title={Extreme events impact attribution: a state of the art},
  author={Noy, Ilan and Stone, D{\'a}ith{\'\i} and Uher, Tom{\'a}{\v{s}}},
  journal={Cell Reports Sustainability},
  volume={1},
  number={5},
  year={2024},
  publisher={Elsevier}
}

@article{zhang2025physics,
  title={Physics-assisted data-driven stochastic reduced-order models for attribution of heterogeneous stress distributions in low-grain polycrystals},
  author={Zhang, Yinling and Dunham, Samuel D and Bronkhorst, Curt A and Chen, Nan},
  journal={Proceedings of the Royal Society A},
  volume={481},
  number={2309},
  pages={20240898},
  year={2025},
  publisher={The Royal Society}
}

@article{akhmediev2016roadmap,
  title={Roadmap on optical rogue waves and extreme events},
  author={Akhmediev, Nail and Kibler, Bertrand and Baronio, Fabio and Beli{\'c}, Milivoj and Zhong, Wei-Ping and Zhang, Yiqi and Chang, Wonkeun and Soto-Crespo, Jose M and Vouzas, Peter and Grelu, Philippe and others},
  journal={Journal of Optics},
  volume={18},
  number={6},
  pages={063001},
  year={2016},
  publisher={IOP Publishing}
}

@article{phadnis2021study,
  title={A study of the effect of {B}lack {S}wan events on stock markets--and developing a model for predicting and responding to them},
  author={Phadnis, Chinmay and Joshi, Sunit and Sharma, Dipasha},
  journal={Australasian Accounting, Business and Finance Journal},
  volume={15},
  number={1},
  year={2021}
}

@article{santoso2017defining,
  title={The defining characteristics of {ENSO} extremes and the strong 2015/2016 {E}l {N}i{\~n}o},
  author={Santoso, Agus and Mcphaden, Michael J and Cai, Wenju},
  journal={Reviews of Geophysics},
  volume={55},
  number={4},
  pages={1079--1129},
  year={2017},
  publisher={Wiley Online Library}
}

@article{chen2018conditional,
  title={Conditional {G}aussian systems for multiscale nonlinear stochastic systems: {P}rediction, state estimation and uncertainty quantification},
  author={Chen, Nan and Majda, Andrew J},
  journal={Entropy},
  volume={20},
  number={7},
  pages={509},
  year={2018},
  publisher={MDPI}
}

@book{liptser2001statistics,
    title = "{Statistics of Random Processes I, II}",
    publisher = {Springer Berlin Heidelberg},
    author={Liptser, Robert Shevilevich and Shiriaev, Albert Nikolaevich},
    volume={1, 2},
    year = {2001}
}

@article{kalman1961new,
  title={New results in linear filtering and prediction theory},
  author={Kalman, Rudolph E and Bucy, Richard S},
  journal={Journal of Fluids Engineering},
  volume={83},
  issue={1},
  year={1961}
}

@book{sarkka2023bayesian,
  title={Bayesian filtering and smoothing},
  author={S{\"a}rkk{\"a}, Simo and Svensson, Lennart},
  volume={17},
  year={2023},
  publisher={Cambridge university press}
}

@article{guan2026prediction,
  title={Prediction of Extreme Events in Multiscale Simulations of Geophysical Turbulence using Reinforcement Learning},
  author={Guan, Yifei and Amoudruz, Lucas and Litvinov, Sergey and Jakhar, Karan and Mojgani, Rambod and Koumoutsakos, Petros and Hassanzadeh, Pedram},
  journal={arXiv preprint arXiv:2603.03351},
  year={2026}
}

@book{williams1991probability,
  title={Probability with martingales},
  author={Williams, David},
  year={1991},
  publisher={Cambridge university press}
}

@article{kullback1951information,
  title={On information and sufficiency},
  author={Kullback, Solomon and Leibler, Richard A},
  journal={The annals of mathematical statistics},
  volume={22},
  number={1},
  pages={79--86},
  year={1951},
  publisher={JSTOR}
}

@book{kullback1997information,
  title={Information theory and statistics},
  author={Kullback, Solomon},
  year={1997},
  publisher={Courier Corporation}
}

@article{chowdhury2022extreme,
  title={Extreme events in dynamical systems and random walkers: {A} review},
  author={Chowdhury, Sayantan Nag and Ray, Arnob and Dana, Syamal K and Ghosh, Dibakar},
  journal={Physics Reports},
  volume={966},
  pages={1--52},
  year={2022},
  publisher={Elsevier}
}

@article{alvre2024studying,
  title={Studying extreme events: An interdisciplinary review of recent research},
  author={Alvre, J and Broska, LH and R{\"u}bbelke, DTG and V{\"o}gele, S},
  journal={Heliyon},
  volume={10},
  number={24},
  year={2024},
  publisher={Elsevier}
}

@article{chang2025extreme,
  title={Extreme Event Aware ($\eta$-)Learning},
  author={Chang, Kai and Sapsis, Themistoklis P},
  journal={arXiv preprint arXiv:2510.19161},
  year={2025}
}

@article{mishra2020routes,
  title={Routes to extreme events in dynamical systems: {D}ynamical and statistical characteristics},
  author={Mishra, Arindam and Leo Kingston, S and Hens, Chittaranjan and Kapitaniak, Tomasz and Feudel, Ulrike and Dana, Syamal K},
  journal={Chaos: An Interdisciplinary Journal of Nonlinear Science},
  volume={30},
  number={6},
  year={2020},
  publisher={AIP Publishing}
}

@book{chen2023stochastic,
  title={Stochastic methods for modeling and predicting complex dynamical systems},
  author={Chen, Nan},
  year={2023},
  publisher={Springer}
}

@article{majda2018model,
  title={Model error, information barriers, state estimation and prediction in complex multiscale systems},
  author={Majda, Andrew J and Chen, Nan},
  journal={Entropy},
  volume={20},
  number={9},
  pages={644},
  year={2018},
  publisher={MDPI}
}

@article{grigorian2025learning,
  title={Learning governing equations of unobserved states in dynamical systems},
  author={Grigorian, Gevik and George, Sandip V and Arridge, Simon},
  journal={Physica D: Nonlinear Phenomena},
  volume={472},
  pages={134499},
  year={2025},
  publisher={Elsevier}
}

@article{boyen2013discovering,
  title={Discovering the hidden structure of complex dynamic systems},
  author={Boyen, Xavier and Friedman, Nir and Koller, Daphne},
  journal={arXiv preprint arXiv:1301.6683},
  year={2013}
}

@article{lovejoy2018spectra,
  title={Spectra, intermittency, and extremes of weather, macroweather and climate},
  author={Lovejoy, S},
  journal={Scientific reports},
  volume={8},
  number={1},
  pages={12697},
  year={2018},
  publisher={Nature Publishing Group UK London}
}

@article{overland2021intermittency,
  title={How do intermittency and simultaneous processes obfuscate the {A}rctic influence on midlatitude winter extreme weather events?},
  author={Overland, James E and Ballinger, Thomas J and Cohen, Judah and Francis, JA and Hanna, Edward and Jaiser, Ralf and Kim, B-M and Kim, S-J and Ukita, Jinro and Vihma, Timo and others},
  journal={Environmental Research Letters},
  volume={16},
  number={4},
  pages={043002},
  year={2021},
  publisher={IOP Publishing}
}

@article{finkel2026rare,
  title={Rare event sampling for moving targets: {E}xtremes of temperature and daily precipitation in a general circulation model},
  author={Finkel, Justin and O’Gorman, Paul A},
  journal={Journal of Advances in Modeling Earth Systems},
  volume={18},
  number={3},
  pages={e2025MS005456},
  year={2026},
  publisher={Wiley Online Library}
}

@book{asch2016data,
  title={Data assimilation: methods, algorithms, and applications},
  author={Asch, Mark and Bocquet, Marc and Nodet, Ma{\"e}lle},
  year={2016},
  publisher={SIAM}
}

@article{law2015data,
  title={Data assimilation},
  author={Law, Kody and Stuart, Andrew and Zygalakis, Kostas},
  journal={Cham, Switzerland: Springer},
  volume={214},
  number={52},
  pages={7},
  year={2015},
  publisher={Springer}
}

@article{andreou2026assimilative,
  title={Assimilative causal inference},
  author={Andreou, Marios and Chen, Nan and Bollt, Erik},
  journal={Nature Communications},
  year={2026},
  publisher={Nature Publishing Group UK London}
}

@article{cohn1994fixed,
  title={A fixed-lag {K}alman smoother for retrospective data assimilation},
  author={Cohn, Stephen E and Sivakumaran, NS and Todling, Ricardo},
  journal={Monthly Weather Review},
  volume={122},
  number={12},
  pages={2838--2867},
  year={1994}
}

@article{alexander2005accelerated,
  title={Accelerated {M}onte {C}arlo for optimal estimation of time series},
  author={Alexander, Francis J and Eyink, Gregory L and Restrepo, Juan M},
  journal={Journal of Statistical Physics},
  volume={119},
  number={5},
  pages={1331--1345},
  year={2005},
  publisher={Springer}
}

@book{evensen2009data,
  title={Data assimilation: {T}he ensemble {K}alman filter},
  author={Evensen, Geir},
  year={2009},
  publisher={Springer}
}

@article{majda2012lessons,
  title = {Lessons in uncertainty quantification for turbulent dynamical systems},
  volume = {32},
  ISSN = {1553-5231},
  url = {http://dx.doi.org/10.3934/dcds.2012.32.3133},
  DOI = {10.3934/dcds.2012.32.3133},
  number = {9},
  journal = {Discrete and Continuous Dynamical Systems},
  publisher = {American Institute of Mathematical Sciences (AIMS)},
  author = {Majda,  Andrew J. and Branicki,  Michal},
  year = {2012},
  pages = {3133–3221}
}

@article{branicki2018accuracy,
  title={Accuracy of some approximate {G}aussian filters for the {N}avier--{S}tokes equation in the presence of model error},
  author={Branicki, Michal and Majda, Andrew J and Law, Kody JH},
  journal={Multiscale Modeling \& Simulation},
  volume={16},
  number={4},
  pages={1756--1794},
  year={2018},
  publisher={SIAM}
}

@article{harlim2010filtering,
  title={Filtering turbulent sparsely observed geophysical flows},
  author={Harlim, John and Majda, Andrew J},
  journal={Monthly Weather Review},
  volume={138},
  number={4},
  pages={1050--1083},
  year={2010}
}

@article{keating2012new,
  title={New methods for estimating ocean eddy heat transport using satellite altimetry},
  author={Keating, Shane R and Majda, Andrew J and Smith, K Shafer},
  journal={Monthly Weather Review},
  volume={140},
  number={5},
  pages={1703--1722},
  year={2012}
}

@article{branicki2013non,
  title={Non-{G}aussian test models for prediction and state estimation with model errors},
  author={Branicki, Michal and Chen, Nan and Majda, Andrew J},
  journal={Chinese Annals of Mathematics, Series B},
  volume={34},
  number={1},
  pages={29--64},
  year={2013},
  publisher={Springer}
}

@article{majda2013physics,
  title={Physics constrained nonlinear regression models for time series},
  author={Majda, Andrew J and Harlim, John},
  journal={Nonlinearity},
  volume={26},
  number={1},
  pages={201--217},
  year={2013},
  publisher={IOP Publishing}
}

@article{brunner2017connecting,
  title={Connecting atmospheric blocking to {E}uropean temperature extremes in spring},
  author={Brunner, Lukas and Hegerl, Gabriele C and Steiner, Andrea K},
  journal={Journal of Climate},
  volume={30},
  number={2},
  pages={585--594},
  year={2017}
}

@book{vallis2017atmospheric,
  title={Atmospheric and oceanic fluid dynamics},
  author={Vallis, Geoffrey K},
  year={2017},
  publisher={Cambridge University Press}
}

@article{qi2018predicting,
  title={Predicting extreme events for passive scalar turbulence in two-layer baroclinic flows through reduced-order stochastic models},
  author={Qi, Di and Majda, Andrew J},
  journal={Commun. Math. Sci},
  volume={16},
  number={1},
  pages={17--51},
  year={2018}
}

@book{majda2006nonlinear,
  title={Nonlinear dynamics and statistical theories for basic geophysical flows},
  author={Majda, Andrew and Wang, Xiaoming},
  year={2006},
  publisher={Cambridge University Press}
}

@article{chen2024physics,
  title={A physics-informed data-driven algorithm for ensemble forecast of complex turbulent systems},
  author={Chen, Nan and Qi, Di},
  journal={Applied Mathematics and Computation},
  volume={466},
  pages={128480},
  year={2024},
  publisher={Elsevier}
}

@article{chen2025modeling,
  title={Modeling partially observed nonlinear dynamical systems and efficient data assimilation via discrete-time conditional {G}aussian {K}oopman network},
  author={Chen, Chuanqi and Wang, Zhongrui and Chen, Nan and Wu, Jin-Long},
  journal={Computer Methods in Applied Mechanics and Engineering},
  volume={445},
  pages={118189},
  year={2025},
  publisher={Elsevier}
}

@article{boers2022theoretical,
  title={Theoretical and paleoclimatic evidence for abrupt transitions in the {E}arth system},
  author={Boers, Niklas and Ghil, Michael and Stocker, Thomas F},
  journal={Environmental Research Letters},
  volume={17},
  number={9},
  pages={093006},
  year={2022},
  publisher={IOP Publishing}
}

@article{lucarini2024detecting,
  title={Detecting and attributing change in climate and complex systems: {F}oundations, {G}reen's functions, and nonlinear fingerprints},
  author={Lucarini, Valerio and Chekroun, Micka{\"e}l D},
  journal={Physical Review Letters},
  volume={133},
  number={24},
  pages={244201},
  year={2024},
  publisher={APS}
}

@article{lin2014tipping,
  title={Tipping points in seaweed genetic engineering: scaling up opportunities in the next decade},
  author={Lin, Hanzhi and Qin, Song},
  journal={Marine drugs},
  volume={12},
  number={5},
  pages={3025--3045},
  year={2014},
  publisher={MDPI}
}

@book{coles2001introduction,
  title={An introduction to statistical modeling of extreme values},
  author={Coles, Stuart and Bawa, Joanna and Trenner, Lesley and Dorazio, Pat},
  volume={208},
  year={2001},
  publisher={Springer}
}

@book{albeverio2006extreme,
  title={Extreme events in nature and society},
  author={Albeverio, Sergio and Jentsch, Volker and Kantz, Holger},
  year={2006},
  publisher={Springer Science \& Business Media}
}

@book{kharif2008rogue,
  title={Rogue waves in the ocean},
  author={Kharif, Christian and Pelinovsky, Efim and Slunyaev, Alexey},
  year={2008},
  publisher={Springer Science \& Business Media}
}

@book{lucarini2016extremes,
  title={Extremes and recurrence in dynamical systems},
  author={Lucarini, Valerio and Faranda, Davide and de Freitas, Jorge Miguel Milhazes and Holland, Mark and Kuna, Tobias and Nicol, Matthew and Todd, Mike and Vaienti, Sandro and others},
  year={2016},
  publisher={John Wiley \& Sons}
}

@book{majda2005information,
  title={Information theory and stochastics for multiscale nonlinear systems},
  author={Majda, Andrew and Abramov, Rafail V and Grote, Marcus J},
  volume={25},
  year={2005},
  publisher={American Mathematical Soc.}
}

@book{crisan2011oxford,
  title={The Oxford handbook of nonlinear filtering},
  author={Crisan, Dan and Rozovskii, Boris},
  year={2011},
  publisher={Oxford university press}
}

@book{rozovskii2012stochastic,
  title={Stochastic evolution systems: linear theory and applications to non-linear filtering},
  author={Rozovskii, Boris Lvovich},
  year={2012},
  publisher={Springer Science \& Business Media}
}

@article{chen2020efficient,
  title={Efficient nonlinear optimal smoothing and sampling algorithms for complex turbulent nonlinear dynamical systems with partial observations},
  author={Chen, Nan and Majda, Andrew J},
  journal={Journal of Computational Physics},
  volume={410},
  pages={109381},
  year={2020},
  publisher={Elsevier}
}

@article{chen2018efficient,
  title="{Efficient statistically accurate algorithms for the Fokker--Planck equation in large dimensions}",
  author={Chen, Nan and Majda, Andrew J},
  journal={Journal of Computational Physics},
  volume={354},
  pages={242--268},
  year={2018},
  publisher={Elsevier}
}

@article{kleeman2011information,
  title={Information theory and dynamical system predictability},
  author={Kleeman, Richard},
  journal={Entropy},
  volume={13},
  number={3},
  pages={612--649},
  year={2011},
  publisher={MDPI}
}

@misc{johansen2008tutorial,
  title="{A Tutorial on Particle Filtering and Smoothing: Fifteen years later}",
  author={A. Doucet and A. M. Johansen},
  year={2008},
  url={https://www.stats.ox.ac.uk/~doucet/doucet_johansen_tutorialPF2011.pdf}
}

@article{taghvaei2023survey,
  title={A survey of feedback particle filter and related controlled interacting particle systems (CIPS)},
  author={Taghvaei, Amirhossein and Mehta, Prashant G},
  journal={Annual Reviews in Control},
  volume={55},
  pages={356--378},
  year={2023},
  publisher={Elsevier}
}

@article{todling1998suboptimal,
  title={Suboptimal schemes for retrospective data assimilation based on the fixed-lag Kalman smoother},
  author={Todling, Ricardo and Cohn, Stephen E and Sivakumaran, NS},
  journal={Monthly Weather Review},
  volume={126},
  number={8},
  pages={2274--2286},
  year={1998}
}

@article{brovkin2021past,
  title={Past abrupt changes, tipping points and cascading impacts in the Earth system},
  author={Brovkin, Victor and Brook, Edward and Williams, John W and Bathiany, Sebastian and Lenton, Timothy M and Barton, Michael and DeConto, Robert M and Donges, Jonathan F and Ganopolski, Andrey and McManus, Jerry and others},
  journal={Nature Geoscience},
  volume={14},
  number={8},
  pages={550--558},
  year={2021},
  publisher={Nature Publishing Group UK London}
}

@article{kuehn2011mathematical,
  title={A mathematical framework for critical transitions: Bifurcations, fast--slow systems and stochastic dynamics},
  author={Kuehn, Christian},
  journal={Physica D: Nonlinear Phenomena},
  volume={240},
  number={12},
  pages={1020--1035},
  year={2011},
  publisher={Elsevier}
}

@article{thomas2016using,
  title={Using natural archives to detect climate and environmental tipping points in the Earth System},
  author={Thomas, Zoe A},
  journal={Quaternary Science Reviews},
  volume={152},
  pages={60--71},
  year={2016},
  publisher={Elsevier}
}

@book{boyd2004convex,
  title="{Convex Optimization}",
  author={Boyd, Stephen and Vandenberghe, Lieven},
  year={2004},
  publisher={Cambridge University Press}
}

@book{nesterov2018lectures,
  title="{Lectures on Convex Optimization}",
  author={Nesterov, Yurii},
  volume={137},
  year={2018},
  publisher={Springer}
}

@article{andreou2025bridging,
  title={Bridging Prediction and Attribution: Identifying Forward and Backward Causal Influence Ranges Using Assimilative Causal Inference},
  author={Andreou, Marios and Chen, Nan},
  journal={arXiv preprint arXiv:2510.21889},
  year={2025}
}

@inproceedings{kurkoski2009single,
  title = {Single-Gaussian messages and noise thresholds for decoding low-density lattice codes},
  url = {http://dx.doi.org/10.1109/ISIT.2009.5205680},
  DOI = {10.1109/isit.2009.5205680},
  booktitle = {2009 IEEE International Symposium on Information Theory},
  publisher = {IEEE},
  author = {Kurkoski,  Brian M. and Yamaguchi,  Kazuhiko and Kobayashi,  Kingo},
  year = {2009},
  month = jun,
  pages = {734–738}
}

@article{minka2013expectation,
  title="{Expectation propagation for approximate Bayesian inference}",
  author={Minka, Thomas P},
  journal={arXiv preprint arXiv:1301.2294},
  year={2013}
}

\appendix

\section{Feature variables used for clustering extreme events in the stochastic model with both damping and forcing}

To distinguish multiple mechanisms leading to extreme events in the stochastic model with hidden damping and forcing variables in \eqref{SPEKF_MA_num}, we construct a set of physically interpretable feature variables for each detected event. These features summarize key aspects of the event evolution, including peak amplitude, duration, precursor behavior, and the relative contributions of damping and stochastic forcing during the growth phase. They are used in the clustering analysis of Section~\ref{Sec:Mechanisms} to separate extreme events into coherent groups associated with distinct dynamical pathways. The variables listed below are computed from the aligned event trajectories and, when appropriate, normalized before clustering. Prior to clustering, each feature is standardized across events to prevent variables with larger numerical scales from dominating the distance metric.

For each detected extreme event with peak time $t_*$, a feature vector is constructed from the event amplitude, temporal shape, pre-peak observable behavior, hidden-state information, and pathway diagnostics. For computing pre-peak statistics, a window spanning the time steps from $t_* - 1.5$ to $t_*$ is used. The full feature set contains the following 18 quantities.

\begin{enumerate}
    \item {Signed peak amplitude:}
    \[
    u(t_*).
    \]
    This preserves the sign of the event and distinguishes positive and negative extremes.

    \item {Absolute peak amplitude:}
    \[
    |u(t_*)|.
    \]
    This measures the event intensity regardless of sign.

    \item {Event duration:}
    Defined as the temporal width during which the trajectory remains above one-half of the peak magnitude with the same sign. It distinguishes sharp bursts from persistent extremes.

    \item {Pre-peak slope of $u$:}
    Obtained from a linear regression of $u(t)$ over the pre-peak window. It quantifies the growth rate approaching the peak.

    \item {Pre-peak mean of $u$:}
    The average state before the peak, indicating whether the event emerges from an already elevated background.

    \item {Pre-peak mean of $\gamma$:}
    Measures the average hidden damping/growth environment before the event.

    \item {Pre-peak mean of $b$:}
    Measures the average stochastic forcing background before the event.

    \item {Pre-peak mean of the damping/growth term:}
    \[
    G(t)=(-d_u+c\gamma(t))u(t).
    \]
    This quantifies the average multiplicative contribution from the damping/growth pathway.

    \item {Pre-peak mean of the forcing term:}
    \[
    B(t)=b(t).
    \]
    This quantifies the average additive forcing contribution.

    \item {Integrated damping/growth contribution:}
    \[
    \int_{\mathrm{pre}} G(t)\,\mathrm{d}t.
    \]
    Measures the cumulative effect of the damping/growth pathway before the peak.

    \item {Integrated forcing contribution:}
    \[
    \int_{\mathrm{pre}} B(t)\,\mathrm{d}t.
    \]
    Measures the cumulative effect of the forcing pathway before the peak.

    \item {Signed integrated damping/growth contribution:}
    The cumulative damping/growth contribution multiplied by the sign of the peak event. Positive values indicate that this pathway supports the eventual extreme.

    \item {Signed integrated forcing contribution:}
    The cumulative forcing contribution multiplied by the sign of the peak event. Positive values indicate forcing aligned with the final event sign.

    \item {Relative damping/growth importance:}
    \[
    \frac{\int_{\mathrm{pre}} |G(t)|\,dt}
    {\int_{\mathrm{pre}} |G(t)|\,dt+\int_{\mathrm{pre}} |B(t)|\,dt}.
    \]
    This measures the fraction of total pathway activity attributable to the damping/growth mechanism.

    \item {Relative forcing importance:}
    \[
    \frac{\int_{\mathrm{pre}} |B(t)|\,dt}
    {\int_{\mathrm{pre}} |G(t)|\,dt+\int_{\mathrm{pre}} |B(t)|\,dt}.
    \]
    This measures the fraction of total pathway activity attributable to stochastic forcing.

    \item {State one unit before peak:}
    \[
    u(t_*-1).
    \]
    Captures the observable precursor level shortly before the event.

    \item {Hidden damping/growth state one unit before peak:}
    \[
    \gamma(t_*-1).
    \]
    Captures the near-onset hidden damping/growth condition.

    \item {Hidden forcing state one unit before peak:}
    \[
    b(t_*-1).
    \]
    Captures the near-onset forcing condition.
\end{enumerate}

These 18 features jointly encode event magnitude, duration, temporal growth, hidden precursors, cumulative mechanism strength, and relative pathway dominance. After standardization, they provide a rich representation for separating extreme events generated by distinct routes such as damping-driven, forcing-driven, and mixed-interaction pathways.

\section{Feature variables used for clustering topographic-flow extreme events}

For the topographic-flow model in \eqref{eq:topo_fourier_full}, extreme events may arise through different regime-transition pathways and involve distinct precursor structures in the resolved and unresolved modes. To classify these events systematically, we introduce a collection of feature variables that characterize the amplitude, temporal evolution, energetic growth, and modal interactions associated with each event. These quantities provide a reduced but interpretable description of the underlying dynamics and are used in the clustering procedure described in Section~\ref{Sec:Mechanisms}. Prior to clustering the same pre-processing procedure is carried out as described in Appendix A. The feature definitions are summarized below.

For each detected extreme event, we construct a feature vector from the peak amplitude, precursor evolution, and hidden-mode energy distribution. The pre-peak, peak, and post-peak windows used to compute the relevant statistics are $[t_*-1, t_*-0.3]$, $[t_*-0.1, t_*+0.1]$, and $[t_*+0.2, t_* + 1]$. Several of the features compute statistics of the modal energy, which are computed on $E=\sum_{\mathbf{k}}|\phi_{\mathbf{k}}|^2$. The following quantities are used.

\begin{enumerate}
\item $V_{\mathrm{peak}}$: peak value of $V$ at the event time.
\item $|V_{\mathrm{peak}}|$: absolute peak amplitude.
\item $S_{\mathrm{pre}}^{(s)}$: short-window pre-event growth rate of $V$.
\item $S_{\mathrm{pre}}^{(l)}$: long-window pre-event growth rate of $V$.
\item $S_{\mathrm{post}}$: short-window post-event decay rate of $V$.
\item $\mathrm{Std}_{\mathrm{pre}}(V)$: local pre-event standard deviation of $V$.
\item $\mathrm{Mean}_{\mathrm{pre}}(V)$: pre-event mean of $V$.
\item $\mathrm{Mean}_{\mathrm{post}}(V)$: post-event mean of $V$.
\item $E_{\mathrm{tot}}^{\mathrm{peak}}$: total modal energy at the event time.
\item $E_{\mathrm{tot}}^{\mathrm{pre,mean}}$: mean total modal energy in the precursor window.
\item $E_{\mathrm{tot}}^{\mathrm{pre,max}}$: maximum total modal energy in the precursor window.
\item $E_{\mathrm{upper}}$: precursor energy in upper-diagonal modes $(1,1),(2,1)$.
\item $E_{\mathrm{zonal}}$: precursor energy in zonal modes $(1,0),(2,0)$.
\item $E_{\mathrm{lower}}$: precursor energy in lower-diagonal modes $(1,-1),(2,-1)$.
\item $E_{\mathrm{merid}}$: precursor energy in meridional modes $(0,1),(0,2)$.
\item $E_{\mathrm{upper}}^{\max}$: maximum upper-diagonal energy before the event.
\item $E_{\mathrm{zonal}}^{\max}$: maximum zonal energy before the event.
\item $E_{\mathrm{lower}}^{\max}$: maximum lower-diagonal energy before the event.
\item $R_{1}=E_{\mathrm{upper}}/E_{\mathrm{lower}}$: diagonal asymmetry ratio.
\item $R_{2}=E_{\mathrm{zonal}}/(E_{\mathrm{upper}}+E_{\mathrm{lower}})$: zonal-to-diagonal ratio.
\item $R_{3}=(E_{\mathrm{upper}}-E_{\mathrm{lower}})/(E_{\mathrm{upper}}+E_{\mathrm{lower}})$: signed diagonal imbalance.
\item Mean precursor $\Re(\phi_{(1,1)})$.
\item Mean precursor $\Re(\phi_{(2,1)})$.
\item Mean precursor $\Re(\phi_{(1,0)})$.
\item Mean precursor $\Re(\phi_{(1,-1)})$.
\item Mean precursor $\Re(\phi_{(2,-1)})$.
\item Peak-time $\Re(\phi_{(1,1)})$.
\item Peak-time $\Re(\phi_{(1,0)})$.
\item Peak-time $\Re(\phi_{(1,-1)})$.
\end{enumerate}

These features combine observable event shape, hidden-mode energy partition, anisotropy, and representative modal amplitudes, allowing different dynamical mechanisms of extreme unblocking to be separated through unsupervised clustering.

\end{document}